\documentclass[10pt]{article}
\usepackage{lmodern}
\usepackage{amsmath}
\usepackage[T1]{fontenc}
\usepackage[utf8]{inputenc}
\usepackage{authblk}
\usepackage{amsfonts}
\usepackage{graphicx}
\usepackage{rotating}
\usepackage{amssymb}
\usepackage[english]{babel}
\usepackage{xcolor}
\usepackage{amsthm}
\usepackage{graphicx}
\usepackage{mathrsfs}
\usepackage{makecell}
\usepackage{microtype}
\usepackage{mathscinet}
\usepackage{array}
\usepackage{multirow}
\usepackage{enumerate}
\usepackage[cal=boondoxo,bb=ams]{mathalfa}
\usepackage{hyperref}
\hypersetup{hidelinks}
\usepackage{booktabs}

\newcounter{question}

\newenvironment{question}
{
	\refstepcounter{question}
	\par\medskip
	\noindent\textbf{Question \Roman{question}.}\itshape
}
{
	\par\medskip
}

\newtheorem{theorem}{Theorem}[section]
\newtheorem{prop}{Proposition}[section]
\newtheorem{lemma}{Lemma}[section]
\newtheorem{coro}{Corollary}[section]
\newtheorem{remark}{Remark}[section]

\newcommand{\ml}{\mathcal}
\newcommand{\mb}{\mathbb}
\DeclareMathOperator{\intt}{int}

\DeclareMathOperator{\bdd}{bdd}

\def\XXint#1#2#3{{\setbox0=\hbox{$#1{#2#3}{\int}$ }
		\vcenter{\hbox{$#2#3$ }}\kern-.6\wd0}}

\title{Large time intrinsic growth and asymptotic behavior for\\ the classical Timoshenko system}
\author[1]{Wenhui Chen\thanks{Wenhui Chen (wenhui.chen.math@gmail.com)}}
\affil[1]{School of Mathematics and Information Science, Guangzhou University,\authorcr 510006 Guangzhou, P. R. China}

\date{}

\begin{document}
		\maketitle

		\begin{abstract}
			In this paper,
			we investigate the large time behavior
			of solutions to the classical Timoshenko system
			in the whole space $\mathbb{R}$.
			Although the system is conservative
			and its natural energy is conserved in time,
			the transversal displacement $\varphi$
			and the rotation angle $\psi$
			exhibit intrinsic polynomial growths.
			We establish sharp $L^p-L^q$ estimates
			for the solutions
			and show that the growth mechanism
			originates from the interaction
			between quadratic oscillations
			and singular low-frequency amplitudes
			of different orders.
			Furthermore,
			we prove the optimality
			of the obtained growth rates
			under a nontrivial zeroth-moment condition
			on the initial data,
			while additional moment cancellations with a nontrivial first-moment condition
			lead to lower-order growth regimes.
			As a consequence,
			we derive large time asymptotic profiles
			related to an effective
			plate-type dispersive structure
			hidden in the low-frequency regime
			of the classical Timoshenko system. We also discuss the relation with the dissipative Timoshenko system through a large time vanishing dissipation limit for time-normalized solutions.
			\medskip
			\\
\noindent\textbf{Keywords:}
Timoshenko system,
intrinsic growth,
sharp asymptotics,
plate-type structure,
large time asymptotic profile, vanishing dissipation limit
			\\
			
			\noindent\textbf{AMS Classification (2020)}
			35L52, 	35B40, 35Q74
		\end{abstract}
\fontsize{12}{15}
\selectfont

\section{Introduction}\setcounter{equation}{0}\label{Section-Introduction}

\hspace{5mm}
In this paper,
we investigate the large time behavior
of solutions to the following classical Timoshenko system (or, the so-called conservative Timoshenko system)
in the whole space $\mathbb{R}$:
\begin{align}\label{Eq-Timoshenko}
	\begin{cases}
		\rho\,\varphi_{tt}-K(\varphi_x-\psi)_x=0,&x\in\mathbb{R},\ t>0,\\
		I_{\rho}\psi_{tt}-EI\psi_{xx}-K(\varphi_x-\psi)=0,&x\in\mathbb{R},\ t>0,\\
		(\varphi,\varphi_t)(0,x)=(\varphi_0,\varphi_1)(x),&x\in\mathbb{R},\\
		(\psi,\psi_t)(0,x)=(\psi_0,\psi_1)(x),&x\in\mathbb{R},
	\end{cases}
\end{align}
where the unknown functions
$\varphi=\varphi(t,x)$
and $\psi=\psi(t,x)$
represent the transversal displacement
and the rotation angle, respectively.
The positive constants
$\rho$, $I_\rho$, $E$, $I$, and $K$
denote the mass density,
the rotary inertia,
the Young modulus,
the moment of inertia of the cross section,
and the shear stiffness (cf. \cite{Graff=1975}), respectively.

The classical Timoshenko system
was established in the early 20th century by \cite{Timoshenko-01,Timoshenko-02},
and remains one of the cornerstones
of structural mechanics,
because it simultaneously captures
the transversal motion,
the shear deformation,
and the rotational inertia effects of beams.
To describe the propagation
and low-frequency structures of the system later, we introduce the following characteristic quantities.
\begin{table}[http]
	\centering
	\caption{Characteristic quantities for the Timoshenko system}
	\begin{tabular}{ll}
		\toprule
		Characteristic quantity & Notation\\
		\midrule
		Shear-wave speed
		& $c_{\mathrm{S}}:=\sqrt{\frac{K}{\rho}}$\\[1mm]
		
		Rotational-wave speed
		& $c_{\mathrm{R}}:=\sqrt{\frac{EI}{I_{\rho}}}$\\[1mm]
		
		Shear-rotation oscillation frequency
		& $c_{\mathrm{O}}:=\sqrt{\frac{K}{I_{\rho}}}$\\[1mm]
		
		Bending dispersion coefficient
		& $c_{\mathrm{D}}:=\sqrt{\frac{EI}{\rho}}$\\
		\bottomrule
	\end{tabular}
	\label{Table_2}
\end{table}

\noindent
The quantities
$c_{\mathrm{O}}$
and
$c_{\mathrm{D}}$
play essential roles
in the low-frequency structure
and the large time asymptotic behavior
of solutions to the classical Timoshenko system \eqref{Eq-Timoshenko}.
In particular,
they characterize
the oscillatory
and dispersive effects
appearing in the present paper.

Most existing studies on the Timoshenko system
have focused on dissipative mechanisms,
stability structures,
and decay properties of damped models.
In contrast, the large time asymptotic behavior
of solutions to the classical
conservative Timoshenko system
in the whole space $\mathbb{R}$
remains far less understood due to the lack of crucial damping mechanisms.

To understand the large time behavior
of the classical Timoshenko system
\eqref{Eq-Timoshenko},
it is important to compare it
with the corresponding dissipative Timoshenko system as follows:
\begin{align}\label{Eq-dissipative-Timoshenko}
	\begin{cases}
		\rho\,\varphi_{tt}-K(\varphi_x-\psi)_x=0,&x\in\mb{R},\ t>0,\\
		I_{\rho}\psi_{tt}-EI\psi_{xx}-K(\varphi_x-\psi)+\gamma\psi_t=0,&x\in\mb{R},\ t>0,\\
		(\varphi,\varphi_t)(0,x)=(\varphi_0,\varphi_1)(x),&x\in\mathbb{R},\\
		(\psi,\psi_t)(0,x)=(\psi_0,\psi_1)(x),&x\in\mathbb{R},
	\end{cases}
\end{align}
where $\gamma>0$
denotes the frictional damping coefficient. The dissipative Timoshenko system
has been extensively studied
in connection with
stability structures,
regularity-loss phenomena,
asymptotic behavior,
and decay properties.
In particular,
most existing asymptotic theories
for the Timoshenko system
have been developed
in the dissipative setting.
In the case of non-equal wave speeds
$c_{\mathrm{S}}\neq c_{\mathrm{R}}$,
the dissipative structure exhibits
the so-called regularity-loss phenomenon,
which was clarified in
\cite{Ide-Haramoto-Kawashima=2008}
by means of energy methods
in the Fourier space.
Subsequent studies established
refined decay estimates
and asymptotic profiles
for the dissipative Timoshenko system
and related models
(see, for example,
\cite{Ammar-Ben-Mun-Racke=2003,Ra-Fer-Santo-Castro=2005,Munoz-Racke=2008,Ide-Haramoto-Kawashima=2008,Fe-Racke=2009,Racke-Said=2013,Mori-Kawashima=2016,Guesmia-Messaoudi=2023}
and references therein). Recently,
\cite{Chen=2026}
discovered a new large time growth phenomenon
for the dissipative Timoshenko system \eqref{Eq-dissipative-Timoshenko}, to be specific,
\[
\|\varphi(t,\cdot)\|_{L^2}\approx t^{\frac34}
\ \ \mbox{and}\ \ 
\|\psi(t,\cdot)\|_{L^2}\approx t^{\frac14}
\ \ \mbox{for}\ \ 
t\gg1.
\]
Moreover,
the corresponding large time asymptotic profiles
were also identified, which are recalled in Proposition \ref{Prop-01} and Proposition \ref{Prop-02}.
These observations naturally lead
to the following question.
\begin{question}
	What is the mechanism responsible
	for the large time growth in the 
	dissipative Timoshenko system
	\eqref{Eq-dissipative-Timoshenko}?
\end{question}
\noindent Since the dissipative system
still exhibits polynomial growth
of the solutions themselves,
it becomes important to clarify
whether such growth originates
from the damping mechanism
or from the intrinsic structure
of the Timoshenko system.

The present paper shows that
the large time growth phenomenon
is already contained
in the conservative structure
of the classical Timoshenko system itself.
In particular,
the frictional damping mechanism
in the dissipative Timoshenko system \eqref{Eq-dissipative-Timoshenko}
is not the origin of the growth behavior.
Instead,
the growth is generated by the interaction between
quadratic oscillations
and singular low-frequency amplitudes
appearing in the Timoshenko system \eqref{Eq-Timoshenko}. Moreover, the coincidence between the growth structures
of the conservative and dissipative Timoshenko systems
naturally raises the following question.
\begin{question}
Whether the dissipative dynamics converge
to the conservative ones in the small damping regime as $\gamma\to0$?
\end{question}
\noindent Motivated by this observation,
we further discuss
a large time vanishing dissipation limit
for time-normalized solutions
in the final section.

It is well-known that
the classical Timoshenko system
\eqref{Eq-Timoshenko}
possesses the conserved natural energy
\begin{align*}
	E(t):=
	\frac{1}{2}\left(
	\rho\|\varphi_t(t,\cdot)\|_{L^2}^2
	+I_{\rho}\|\psi_t(t,\cdot)\|_{L^2}^2
	+EI\|\psi_x(t,\cdot)\|_{L^2}^2
	+K\|\varphi_x(t,\cdot)-\psi(t,\cdot)\|_{L^2}^2
	\right),
\end{align*}
satisfying
\[
E(t)\equiv E(0)
\ \ \mbox{for any}\ \ t>0.
\]
However,
the conservation of energy itself
does not provide
a detailed description
of the large time behavior
of the transversal displacement
$\varphi$
and the rotation angle
$\psi$.
This naturally leads
to the following question.
\begin{question}
Can one describe the precise large time behavior of $\varphi$ and $\psi$ themselves?
\end{question}
\noindent This problem goes beyond
the standard energy theory,
since the conserved energy
$E(t)$
does not control
the displacement variables
$\|\varphi(t,\cdot)\|_{L^2}$
and $\|\psi(t,\cdot)\|_{L^2}$ themselves in $\mb{R}$.
In particular,
their large time behavior
cannot be captured
by standard energy methods.

One of the purposes of the present paper
is to answer this question
by establishing sharp $L^p-L^q$
growth estimates
for solutions to \eqref{Eq-Timoshenko}.
More precisely,
for suitable initial data
in $H^{s,p}$
with
$1\leqslant p\leqslant2\leqslant q\leqslant+\infty$,
we derive
\begin{align*}
	\|\varphi(t,\cdot)\|_{L^q}
	&\lesssim
	(1+t)^{1-\frac{1}{2}\left(\frac{1}{p}-\frac{1}{q}\right)},
	\\
	\|\psi(t,\cdot)\|_{L^q}
	&\lesssim
	(1+t)^{\frac{1}{2}-\frac{1}{2}\left(\frac{1}{p}-\frac{1}{q}\right)}.
\end{align*}
Furthermore,
in the case
$(p,q)=(1,2)$,
we establish
the optimal large time growth rates
\begin{align*}
	\|\varphi(t,\cdot)\|_{L^2}
	\approx
	t^{\frac{3}{4}}
	\ \ \mbox{and}\ \
	\|\psi(t,\cdot)\|_{L^2}
	\approx
	t^{\frac{1}{4}}
\end{align*}
for
$t\gg1$,
under the nontrivial moment condition
$
\int_{\mathbb{R}}
\varphi_1(x)\,\mathrm{d}x
\neq0$.
Moreover, the additional moment cancellation $\int_{\mb{R}}\varphi_1(x)\,\mathrm{d}x=0$ leads to lower-order growth regimes.
Our results reveal that the large time growth is an intrinsic phenomenon generated by the interaction between quadratic oscillations and singular low-frequency amplitudes appearing in the classical Timoshenko system. The above questions, therefore, are answered through the sharp
$L^p-L^q$ growth estimates obtained in Section \ref{Sec-Lq},
the optimal asymptotic profiles derived in Section \ref{Section-L2-est},
and the large time vanishing dissipation limit established
in Section \ref{Sec-Final}.

The derivation of the sharp
$L^q$-growth estimates
is highly nontrivial.
In the low-frequency region,
the analysis requires
a delicate treatment
of the oscillatory
and dispersive structures
of the Timoshenko system.
In the high-frequency region,
the coupled propagation structure
must be combined
with refined dyadic decomposition arguments
to identify the dominant contributions.
The proof of the optimal
$L^2$-growth estimates
is even more delicate.
In contrast to the dissipative case
studied in \cite{Chen=2026},
the classical Timoshenko system
does not contain
the additional low-frequency regularization
generated by the fractional damping structure.
Consequently,
the Fourier analysis
developed in \cite{Chen=2026}
is no longer directly applicable.
To overcome this difficulty,
we adapt several ideas
from the wave and plate equations
\cite{Ikehata=2023,Chen-Takeda=2023,Ikehata=2024,Takeda=2026}
to the present strongly coupled system,
while avoiding explicit calculations
of the characteristic roots.
This requires
a refined analysis
of the coupled oscillatory structure
generated by the Timoshenko system.

\medskip
\noindent\textbf{\large Notation.}
Let the generic positive constants
$c$ and $C$,
independent of $t$,
vary from line to line.
The notation
$f\lesssim g$
means that there exists a constant
$C>0$
such that
$f\leqslant Cg$.
Similarly,
$f\gtrsim g$
means
$g\lesssim f$.
We write
$f\approx g$
if both
$f\lesssim g$
and
$g\lesssim f$
hold.
We denote by
$f\ast_{(x)}g$
the convolution of
$f$ and $g$
with respect to the spatial variable $x$. We denote by
$\widehat{f}=\mathcal{F}_{x\to\xi}(f)$
the Fourier transform of $f$
in the spatial variable $x$,
and by
$\mathcal{F}^{-1}_{\xi\to x}$
its inverse.
Moreover,
we introduce the following zones
in the Fourier space:
\begin{align*}
	\mathcal{Z}_{\mathrm{int}}(\varepsilon_0)
	&:=
	\{\xi\in\mathbb{R}:\ |\xi|\leqslant\varepsilon_0\},
	\\
	\mathcal{Z}_{\mathrm{bdd}}(\varepsilon_0,N_0)
	&:=
	\{\xi\in\mathbb{R}:\ \varepsilon_0\leqslant|\xi|\leqslant N_0\},
	\\
	\mathcal{Z}_{\mathrm{ext}}(N_0)
	&:=
	\{\xi\in\mathbb{R}:\ |\xi|\geqslant N_0\},
\end{align*}
where
$\varepsilon_0>0$
is sufficiently small
and
$N_0>0$
is sufficiently large.
Let
$\chi_{\mathrm{int}}(\xi)$,
$\chi_{\mathrm{bdd}}(\xi)$,
$\chi_{\mathrm{ext}}(\xi)$
be smooth cut-off functions
supported in
$\mathcal{Z}_{\mathrm{int}}(\varepsilon_0)$,
$\mathcal{Z}_{\mathrm{bdd}}(\frac{\varepsilon_0}{2},2N_0)$,
$\mathcal{Z}_{\mathrm{ext}}(N_0)$,
respectively,
satisfying
\[
\chi_{\mathrm{bdd}}(\xi)
=
1-\chi_{\mathrm{int}}(\xi)-\chi_{\mathrm{ext}}(\xi)
\ \ 
\mbox{for all}
\ \ 
\xi\in\mathbb{R}.
\]
For simplicity,
we write
$\chi:=\chi_{\mathrm{int}}$
and define
\begin{align*}
	\|f\|_{L^q_{\chi}}
	:=
	\big\|\chi(D)f\big\|_{L^q}
	\ \ \mbox{and}\ \ 
	\|f\|_{L^q_{1-\chi}}
	:=
	\big\|
	\big(1-\chi(D)\big)f
	\big\|_{L^q}
\end{align*}
to denote the corresponding localized norms.
The Bessel potential space
is denoted by
\[
H^{s,p}
:=
\left\{
f\in\mathcal{S}'
:\ 
(1-\partial_x^2)^{\frac{s}{2}}f\in L^p
\right\},
\]
where
$s\in\mathbb{R}$
and
$1\leqslant p\leqslant+\infty$.
Finally,
we define the weighted
$L^1$ space by
\begin{align*}
	L^{1,\sigma}
	:=
	\left\{
	f\in L^1:
	\ \ 
	\|f\|_{L^{1,\sigma}}
	:=
	\int_{\mb{R}}
	(1+|x|)^{\sigma}|f(x)|\,\mathrm{d}x
	<+\infty
	\right\},
\end{align*} where $\sigma\in\mb{N}_0$. Note that $L^{1,0}\equiv L^1$.
The zeroth moment for $f\in L^1$,
and first moment for $f\in L^{1,1}$, respectively, are defined by
\begin{align*}
	P_f
	:=
	\int_{\mathbb{R}}
	f(x)\,\mathrm{d}x
	\ \ \mbox{and}\ \ 
	M_f
	:=
	\int_{\mathbb{R}}
	(-x)f(x)\,\mathrm{d}x.
\end{align*}

\section{Main results}\setcounter{equation}{0}\label{Section-Main-Results}

\subsection{$L^p-L^q$ growth estimates}

\hspace{5mm}
For brevity,
we introduce two data spaces
\begin{align*}
	X_{p,q}^{\epsilon}
	&:=
	H^{s_{p,q}+\epsilon,p}
	\times
	H^{s_{p,q}-1+\epsilon,p}
	\ \ \mbox{and}\ \ 
	Y_{p,q}^{\epsilon}
	:=
	H^{s_{p,q}-1+\epsilon,p}
	\times
	H^{s_{p,q}-2+\epsilon,p}
\end{align*}
with the index
\begin{align*}
	s_{p,q}
	:=
	\frac{3}{2}
	\left(
	\frac{1}{p}-\frac{1}{q}
	\right)
	\ \ 
	\mbox{for}
	\ \ 
	1\leqslant p\leqslant2\leqslant q\leqslant+\infty.
\end{align*}
Our first result reveals
the intrinsic polynomial growth
of the displacement variables
in the $L^p-L^q$ framework.

\begin{theorem}\label{Theorem-Lq}
	Suppose that the initial data
	belong to the corresponding spaces
	appearing on the right-hand side
	of the estimates below,
	with an arbitrarily small constant
	$\epsilon>0$.
	Then,
	 the transversal displacement
	$\varphi$
	and
	the rotation angle
$\psi$
	to the classical Timoshenko system
	\eqref{Eq-Timoshenko}
	satisfy the following
	$L^p-L^q$ growth estimates:
	\begin{itemize}
		\item
		in the case of non-equal speed
		$c_{\mathrm{S}}\neq c_{\mathrm{R}}$,
		\begin{align*}
			\|\varphi(t,\cdot)\|_{L^q}
			&\lesssim
			(1+t)^{
				1-\frac12\left(\frac1p-\frac1q\right)
			}
			\|(\varphi_0,\varphi_1)\|_{X_{p,q}^{\epsilon}}
			+
			(1+t)^{
				\frac12-\frac12\left(\frac1p-\frac1q\right)
			}
			\|(\psi_0,\psi_1)\|_{Y_{p,q}^{\epsilon}},
			\\
			\|\psi(t,\cdot)\|_{L^q}
			&\lesssim
			(1+t)^{
				\frac12-\frac12\left(\frac1p-\frac1q\right)
			}
			\|(\varphi_0,\varphi_1)\|_{Y_{p,q}^{\epsilon}}
	+
			(1+t)^{
				-\frac12\left(\frac1p-\frac1q\right)
			}
			\|(\psi_0,\psi_1)\|_{X_{p,q}^{\epsilon}};
		\end{align*}
		
		\item
		in the case of equal speed
		$c_{\mathrm{S}}=c_{\mathrm{R}}$,
		\begin{align*}
			\|\varphi(t,\cdot)\|_{L^q}
			&\lesssim
			(1+t)^{
				1-\frac12\left(\frac1p-\frac1q\right)
			}
			\|(\varphi_0,\varphi_1)\|_{X_{p,q}^{\epsilon}}
			+
			(1+t)^{
				\frac12-\frac12\left(\frac1p-\frac1q\right)
			}
			\|(\psi_0,\psi_1)\|_{X_{p,q}^{\epsilon}},
			\\
			\|\psi(t,\cdot)\|_{L^q}
			&\lesssim
			(1+t)^{
				\frac12-\frac12\left(\frac1p-\frac1q\right)
			}
			\|(\varphi_0,\varphi_1)\|_{X_{p,q}^{\epsilon}}
			+
			(1+t)^{
				-\frac12\left(\frac1p-\frac1q\right)
			}
			\|(\psi_0,\psi_1)\|_{X_{p,q}^{\epsilon}}.
		\end{align*}
	\end{itemize}
\end{theorem}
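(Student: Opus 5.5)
We work entirely in the Fourier space. Applying $\mathcal{F}_{x\to\xi}$ to \eqref{Eq-Timoshenko} gives the $2\times2$ second-order ODE system
\begin{align*}
\rho\,\widehat{\varphi}_{tt}+K\xi^2\widehat{\varphi}-iK\xi\widehat{\psi}=0,\qquad
I_\rho\widehat{\psi}_{tt}+(EI\xi^2+K)\widehat{\psi}+iK\xi\widehat{\varphi}=0.
\end{align*}
Its characteristic equation is the biquadratic
\[
\lambda^4+\big((c_{\mathrm S}^2+c_{\mathrm R}^2)\xi^2+c_{\mathrm O}^2\big)\lambda^2+c_{\mathrm S}^2c_{\mathrm R}^2\xi^4=0 .
\]
The roots are $\lambda=\pm i\omega_\pm(\xi)$, where $\omega_\pm^2$ are the two roots of this quadratic in $-\lambda^2$. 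Rather than working with explicit root formulas, we use the symmetric functions
\[
\omega_+^2+\omega_-^2=(c_{\mathrm S}^2+c_{\mathrm R}^2)\xi^2+c_{\mathrm O}^2,\qquad \omega_+^2\omega_-^2=c_{\mathrm S}^2c_{\mathrm R}^2\xi^4 .
\]
Each of $\widehat\varphi,\widehat\psi$ is then written as a combination of $\cos(\omega_\pm t)$ and $\sin(\omega_\pm t)/\omega_\pm$. The coefficients are rational in $\xi,\omega_\pm^2$ with denominator $\omega_+^2-\omega_-^2$, and they multiply the four data. We then split with $\chi_{\mathrm{int}},\chi_{\mathrm{bdd}},\chi_{\mathrm{ext}}$.

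\textbf{Low frequencies.} For $|\xi|\leqslant\varepsilon_0$ we have
\[
\omega_+^2=c_{\mathrm O}^2+O(\xi^2),\qquad \omega_-^2=\frac{c_{\mathrm S}^2c_{\mathrm R}^2}{c_{\mathrm O}^2}\xi^4+O(\xi^6)=c_{\mathrm D}^2\xi^4(1+O(\xi^2)),
\]
since $c_{\mathrm S}^2c_{\mathrm R}^2/c_{\mathrm O}^2=EI/\rho=c_{\mathrm D}^2$. This is the hidden plate-type branch.

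The $\omega_+$ part is a nondegenerate Klein–Gordon-type oscillation with bounded amplitudes. It contributes $(1+t)^{-\frac12(\frac1p-\frac1q)}$ through stationary phase / Littman-type estimates with a localized symbol.

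The dominant part is the $\omega_-$ branch. Expanding the coefficients, we expect the following amplitudes.
\begin{itemize}
\item In $\widehat\varphi$, the $\widehat\varphi_1$ coefficient is $\sin(\omega_-t)/\omega_-\sim\sin(c_{\mathrm D}\xi^2t)/(c_{\mathrm D}\xi^2)$, and the $\widehat\psi_j$ coefficients carry an extra factor $\xi$.
\item In $\widehat\psi$, the $\widehat\varphi_1$ coefficient is $\sim i\xi\sin(\omega_-t)/\omega_-$, and $\widehat\psi_j$ enters with amplitude $O(\xi^2)$.
\end{itemize}
These are the singular low-frequency amplitudes of different orders. The key estimate is, for $k\geqslant0$,
\[
\Big\|\mathcal F^{-1}\Big(\chi(\xi)\,\xi^{k}\frac{\sin(\omega_-t)}{\omega_-}\Big)\ast f\Big\|_{L^q}\lesssim (1+t)^{1-\frac{k}{2}-\frac12(\frac1p-\frac1q)}\|f\|_{L^p}.
\]
We prove it by interpolation between the $L^1\to L^\infty$ and $L^2\to L^2$ (or $L^p\to L^{p'}$) endpoints plus Young's inequality. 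For the singular factor we write
\[
\frac{\sin(\omega_-t)}{\omega_-}=t\int_0^1\cos(\theta\omega_-t)\,\mathrm d\theta .
\]
This reduces matters to the oscillatory integral $\int\chi\xi^k e^{\pm i\theta t\omega_-(\xi)}e^{ix\xi}\mathrm d\xi$. Since $\omega_-''\approx 2c_{\mathrm D}\neq0$, van der Corput / stationary phase gives the bound $(\theta t)^{-1/2}$ uniformly in $x$, and the $\theta$-integral converges. In the form $t^{1-k/2}$, the decay in $t$ comes from the scaling $\xi\sim t^{-1/2}$.

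The cosine terms (data $\varphi_0$) have amplitude $O(1)$ and give only $t^{-\frac12(\frac1p-\frac1q)}$, which is dominated by the claimed rate. In the non-equal speed case the $\psi$-data coefficients in $\widehat\varphi$ have the form $\xi\times$(bounded). Since $\psi$-data only needs one derivative less, we may use $Y^\epsilon_{p,q}$ there; the same holds for the $\varphi$-data in $\psi$. This yields exactly the exponents $1$, $\tfrac12$, $\tfrac12$, $0$ claimed.

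\textbf{Bounded and high frequencies.} On $\mathcal Z_{\mathrm{bdd}}$ the roots are separated ($\omega_+>\omega_->0$) and smooth. We first check that $\omega_+^2-\omega_-^2>0$ there, which holds since the discriminant $((c_{\mathrm S}^2+c_{\mathrm R}^2)\xi^2+c_{\mathrm O}^2)^2-4c_{\mathrm S}^2c_{\mathrm R}^2\xi^4>0$. The kernels are then compactly supported smooth symbols with bounded time-dependence, giving at most polynomially harmless bounds.

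On $\mathcal Z_{\mathrm{ext}}$ we have $\omega_\pm=c_{\mathrm{S/R}}|\xi|+O(|\xi|^{-1})$ when $c_{\mathrm S}\neq c_{\mathrm R}$. These are wave-type propagators, handled by a dyadic decomposition $|\xi|\sim2^j$ together with the $L^p\to L^q$ wave estimates. This costs $s_{p,q}=\frac32(\frac1p-\frac1q)$ derivatives plus $\epsilon$ for summation, which explains the spaces $X^\epsilon_{p,q}$. The coupling coefficients in $\widehat\varphi$ for $\psi$-data decay like $|\xi|^{-1}$, which justifies the $Y$-regularity.

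In the equal speed case $c_{\mathrm S}=c_{\mathrm R}$ the roots nearly coalesce at high frequency: $\omega_+^2-\omega_-^2\sim c_{\mathrm O}^2$ rather than $\sim\xi^2$. The mixing coefficients then lose this $|\xi|^{-1}$ gain. This is why only $X^\epsilon_{p,q}$-norms appear there, and we must rewrite the propagator via divided differences $(\cos\omega_+t-\cos\omega_-t)/(\omega_+^2-\omega_-^2)$ to avoid the degenerate denominator.

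\textbf{Main obstacle.} The main obstacle is the low-frequency $\omega_-$ analysis: obtaining the sharp $L^1\to L^\infty$ bound for the singular amplitude $\xi^k/\omega_-$ uniformly. The phase is only approximately $c_{\mathrm D}\xi^2$, so we first verify $|\omega_-''(\xi)|\geqslant c>0$ on $\mathcal Z_{\mathrm{int}}$ via the implicit function theorem applied to $\omega_-=\xi^2 g(\xi^2)$, with $g$ analytic and $g(0)=c_{\mathrm D}$. Only then do we apply van der Corput with the $\theta$-averaging trick.
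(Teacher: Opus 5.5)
\paragraph{The key low-frequency estimate fails for $k\geqslant1$.}
You state the estimate for $\xi^k\sin(\omega_-t)/\omega_-$, but the proof you give for it does not work when $k\geqslant1$. You write $\frac{\sin(\omega_-t)}{\omega_-}=t\int_0^1\cos(\theta\omega_-t)\,\mathrm d\theta$ and bound the inner oscillatory integral by van der Corput uniformly in $x$. This gives $\sup_x|\mathcal K(t,x)|\lesssim t\int_0^1(\theta t)^{-1/2}\,\mathrm d\theta\approx t^{1/2}$ for every $k$, because the amplitude satisfies $\|\chi\xi^k\|_{L^\infty}+\|(\chi\xi^k)'\|_{L^1}\approx1$.

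Since you take the supremum in $x$ inside the $\theta$-integral, no better bound can come out. The inner bound is sharp: for $|x|\approx\theta t$ the stationary point sits at $|\xi|\approx\varepsilon_0$, where $|\xi|^k\approx1$. So the vanishing of $\xi^k$ at the origin is never used, and your remark about the scaling $\xi\sim t^{-1/2}$ is not implemented by the argument.

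At the $(1,\infty)$ endpoint you therefore get $t^{1/2}$ instead of $t^{\frac{1-k}{2}}$:
\begin{itemize}
\item for $k=1$ you need $t^0$, which governs the cross terms $\varphi\leftarrow\psi_1$ and $\psi\leftarrow\varphi_1$;
\item for $k=2$ you need $t^{-1/2}$, which governs $\psi\leftarrow\psi_1$.
\end{itemize}
After interpolation with the $L^2$ bound $t^{1-\frac k2}$, the exponents $\frac12-\frac12(\frac1p-\frac1q)$ and $-\frac12(\frac1p-\frac1q)$ are not reached.

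The paper avoids this by rescaling the whole kernel with $\eta=\sqrt t\,\xi$, so that $\mathcal K_{1,\ell}(t,x)=t^{\frac{1-\ell}{2}}\widetilde{\mathcal K}_{1,\ell,t}(x/\sqrt t)$. It then bounds the region $|\eta|\leqslant2$ trivially. On $|\eta|\geqslant1$ it applies van der Corput with amplitude $|\eta|^{\ell-2}/g_0(\eta/\sqrt t)$, which is bounded with integrable derivative there. Two alternatives also work:
\begin{itemize}
\item for $k=2$, the amplitude $\xi^2/\omega_-$ is smooth and van der Corput applies directly;
\item for $k\in\{0,1\}$, decompose dyadically in $|\xi|\approx2^{-m}\geqslant t^{-1/2}$, getting $t^{-1/2}2^{m(2-k)}$ per block, and bound $|\xi|\leqslant t^{-1/2}$ trivially by $t^{\frac{1-k}{2}}$.
\end{itemize}

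\paragraph{Problems away from the origin.}
On $\mathcal Z_{\mathrm{bdd}}$, merely bounded (``polynomially harmless'') kernels cannot give the decaying rate $(1+t)^{-\frac12(\frac1p-\frac1q)}$ claimed for $\psi\leftarrow(\psi_0,\psi_1)$. Moreover, when $c_{\mathrm S}\neq c_{\mathrm R}$ this rate is not available there at all.
\begin{itemize}
\item Near the origin, $\omega_-''\to2c_{\mathrm D}>0$.
\item At high frequency, $\omega_-=c_{\min}|\xi|+\frac{b}{2c_{\min}|\xi|}+O(|\xi|^{-3})$ with $b=\frac{c_{\mathrm O}^2c_{\min}^2}{c_{\min}^2-c_{\max}^2}<0$, so $\omega_-''<0$.
\item Hence $\omega_-$ has an inflection point inside $\mathcal Z_{\mathrm{bdd}}$, and there the coefficient $\frac{c_{\mathrm S}^2\xi^2-\omega_-^2}{\omega_+^2-\omega_-^2}$ of $\cos(\omega_-t)\widehat\psi_0$ does not vanish.
\item Consequently only $t^{-1/3}$ decay holds near $(p,q)=(1,\infty)$.
\end{itemize}
The paper's Lemma~\ref{Lemma-Large} also passes over this, by using $|\lambda_{\mathrm I,2}''|\approx|\xi|^{-3}$ on every dyadic block of $1-\chi$, including those in the bounded zone.

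Your equal-speed discussion is also incorrect. When $c_{\mathrm S}=c_{\mathrm R}$ one has exactly $\omega_\pm=\frac12\big(\sqrt{4c_{\mathrm S}^2\xi^2+c_{\mathrm O}^2}\pm c_{\mathrm O}\big)$. The roots never coalesce ($\omega_+-\omega_-\equiv c_{\mathrm O}$), so no divided differences are needed. Also $\omega_+^2-\omega_-^2=c_{\mathrm O}\sqrt{4c_{\mathrm S}^2\xi^2+c_{\mathrm O}^2}\approx|\xi|$, as in \eqref{Eq-01}, not $\approx c_{\mathrm O}^2$. It is this $|\xi|$, rather than $|\xi|^2$, that makes the off-diagonal coefficient $\frac{K}{\rho}\frac{i\xi}{\omega_+^2-\omega_-^2}$ bounded instead of $O(|\xi|^{-1})$, and so accounts for $X^\epsilon_{p,q}$ versus $Y^\epsilon_{p,q}$. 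With your $\approx c_{\mathrm O}^2$, the $X^\epsilon_{p,q}$-regularity would not suffice.

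Finally, in one space dimension the high-frequency decay and the derivative loss $s_{p,q}$ come from the Klein--Gordon-type curvature $|\omega_\pm''|\approx|\xi|^{-3}$, via van der Corput with $\mu\approx t2^{-3j}$. They do not come from wave estimates, which give no decay.
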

\begin{remark}
	The arbitrarily small loss
	$\epsilon>0$
	is technical
	and arises from the dyadic decomposition
	in the high-frequency analysis.
	More precisely,
	it is caused by the embedding
	$
	H^{s+\epsilon,p}\hookrightarrow B^s_{p,1}
	$.
\end{remark}

\begin{remark}
	The difference between
	the equal speed case
	and the non-equal speed case
	in Theorem \ref{Theorem-Lq}
	has a natural interpretation
	from the viewpoint
	of propagation mechanisms.
	When
	$c_{\mathrm{S}}\neq c_{\mathrm{R}}$,
	the shear and rotational waves propagate
	with genuinely different speeds,
	which produces a stronger separation
	of high-frequency modes
and
stronger high-frequency interactions,
which require additional regularity
in the corresponding estimates.
	In contrast,
	when
	$c_{\mathrm{S}}=c_{\mathrm{R}}$,
	the two principal propagation mechanisms
	are coherent at the leading order.
	This propagation coherence weakens
	the high-frequency loss
	and explains
	the improved regularity requirements
	in the equal speed configuration.
\end{remark}

\begin{remark}
	The derived $L^p-L^q$ estimates
	can be interpreted
	through the interaction between
	quadratic oscillations
	and singular low-frequency amplitudes.
	More precisely,
	the dispersive scaling produces
	the decay factor
	\[
	(1+t)^{
		-\frac12\left(\frac1p-\frac1q\right)},
	\]
	whereas the amplitude singularities
	generate the polynomial growth factors.
	Therefore,
	the polynomial growth
	is an intrinsic phenomenon
	of the conservative Timoshenko system.
	In particular,
	it should not be interpreted
	as an instability phenomenon,
	since the characteristic roots
	remain purely imaginary
	and the natural energy
	is conserved in time.
\end{remark}

\subsection{Sharp large time asymptotics}

\hspace{5mm}
For later convenience,
for
$\sigma\in\{0,1\}$,
we introduce
\begin{align*}
	Z_1^{\sigma}
	&:=
	L^2\times(H^{-1}\cap L^{1,\sigma}),
	\\
	Z_2^{\sigma}
	&:=
	H^{-1}\times(H^{-2}\cap L^{1,\sigma}),
	\\
	Z_3
	&:=
	L^2\times H^{-1}.
\end{align*}
Our second contribution
reveals the sharp large time asymptotic behavior
of solutions to
the classical Timoshenko system.
In particular,
the leading asymptotic profiles
are governed by
the zeroth moment
of the initial velocity
$\varphi_1$,
while additional moment cancellations
lead to lower-order growth regimes.
These asymptotic characterizations
confirm the sharpness
of the corresponding growth estimates
in the special case
$(p,q)=(1,2)$ of Theorem 2.1.

\begin{theorem}\label{Thm-01}
	Let
	$\sigma\in\{0,1\}$.
	Suppose that
	$(\varphi_0,\varphi_1)\in Z_1^{\sigma}$
	and
	$(\psi_0,\psi_1)\in Z_{\psi,\star}^{\sigma}$ for the classical Timoshenko system
	\eqref{Eq-Timoshenko},
	where
	\[
	Z_{\psi,\star}^{\sigma}
	:=
	\begin{cases}
		Z_2^{\sigma}
		&\mbox{if}\ \ c_{\mathrm S}\neq c_{\mathrm R},
		\\
		Z_1^{\sigma}
		&\mbox{if}\ \ c_{\mathrm S}=c_{\mathrm R}.
	\end{cases}
	\]
	Then, the following statements hold
	for sufficiently large time.
	\begin{itemize}
		\item
		If
		$P_{\varphi_1}\neq0$,
		then the transversal displacement
		$\varphi$
		satisfies the following optimal growth estimate:
		\begin{align*}
			t^{\frac34}|P_{\varphi_1}|
			\lesssim
			\|\varphi(t,\cdot)\|_{L^2}
			\lesssim
			t^{\frac34}
			\|(\varphi_0,\varphi_1)\|_{Z_1^0}
			+
			t^{\frac14}
			\|(\psi_0,\psi_1)\|_{Z_{\psi,\star}^0},
		\end{align*}
		and the following asymptotic relation:
		\begin{align*}
			\lim\limits_{t\to+\infty}t^{-\frac34}\left\|
			\varphi(t,\cdot)
			-
			\sqrt{t}\,
			\ml{G}_{0}
			\left(
			\tfrac{\cdot}{\sqrt t}
			\right)
			P_{\varphi_1}
			\right\|_{L^2}
			=0,
		\end{align*}
		where the intrinsic plate-type asymptotic profile is given by
		\begin{align*}
			\ml{G}_{0}(y)
			:=
			\ml{F}^{-1}_{\eta\to y}
			\left(
			\frac{
				\sin(c_{\mathrm D}|\eta|^2)
			}{
				c_{\mathrm D}|\eta|^2
			}
			\right).
		\end{align*}
		
		\item
		If
		$P_{\varphi_1}=0$
		but
		$M_{\varphi_1}-\frac{I_{\rho}}{\rho}P_{\psi_1}\neq0$,
		then the transversal displacement
		$\varphi$
		satisfies the following optimal growth estimate:
		\begin{align*}
			t^{\frac14}
			\left|
			M_{\varphi_1}
			-
			\frac{I_{\rho}}{\rho}P_{\psi_1}
			\right|
			\lesssim
			\|\varphi(t,\cdot)\|_{L^2}
			\lesssim
			t^{\frac14}
			\|(\varphi_0,\varphi_1)\|_{Z_1^1}
			+
			t^{\frac14}
			\|(\psi_0,\psi_1)\|_{Z_{\psi,\star}^1},
		\end{align*}
		and the following asymptotic relation:
		\begin{align*}
			\lim\limits_{t\to+\infty}t^{-\frac14}\left\|
			\varphi(t,\cdot)
			-
			\ml{G}_{1}
			\left(
			\tfrac{\cdot}{\sqrt{t}}
			\right)
			\left(
			M_{\varphi_1}
			-
			\frac{I_{\rho}}{\rho}P_{\psi_1}
			\right)
			\right\|_{L^2}
			=0,
		\end{align*}
		where the derivative of intrinsic plate-type asymptotic profile is given by
		\begin{align*}
			\ml{G}_{1}(y)
			:=\partial_y\ml{G}_0(y)=
			\ml{F}^{-1}_{\eta\to y}
			\left(i\eta
			\frac{
				\sin(c_{\mathrm D}|\eta|^2)
			}{
				c_{\mathrm D}|\eta|^2
			}
			\right).
		\end{align*}
	\end{itemize}
\end{theorem}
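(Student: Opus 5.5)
The plan is to work on the Fourier side, $\widehat\varphi(t,\xi)$, and split with $\chi(D)$ and $1-\chi(D)$. In the low-frequency zone $\mathcal{Z}_{\mathrm{int}}(\varepsilon_0)$ I will extract an explicit leading multiplier and show that everything else is of lower order.

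\textbf{High and bounded frequencies.} On $\mathcal{Z}_{\mathrm{bdd}}\cup\mathcal{Z}_{\mathrm{ext}}$ the symbol is uniformly oscillatory. Energy conservation, applied to the Fourier-localized system, together with the boundedness of $\langle\xi\rangle|\xi|^{-1}$ there, will give
\[
\|\varphi(t,\cdot)\|_{L^2_{1-\chi}}\lesssim\|(\varphi_0,\varphi_1)\|_{Z_3}+\|(\psi_0,\psi_1)\|_{Z_{\psi,\star}^0}
\]
uniformly in $t$. This is the case $(p,q)=(2,2)$ of the high-frequency part of Theorem \ref{Theorem-Lq}, and the same bound holds with $Z_{\psi,\star}^1$ in place of $Z_{\psi,\star}^0$. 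These contributions are $O(1)=o(t^{1/4})$, so they are negligible in both statements.

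\textbf{Low-frequency representation.} After the Fourier transform, the system becomes a $2\times2$ second-order ODE system in $t$. Its characteristic equation is
\[
(\rho\lambda^2+K\xi^2)(I_\rho\lambda^2+EI\xi^2+K)-K^2\xi^2=0 .
\]
For $|\xi|\leqslant\varepsilon_0$ its roots split into a slow pair and a fast pair,
\[
\lambda_\pm=\pm i c_{\mathrm D}\xi^2\big(1+O(\xi^2)\big),\qquad
\lambda_{\pm}'=\pm i c_{\mathrm O}\big(1+O(\xi^2)\big).
\]
I will not compute these roots explicitly. Instead I will write $\widehat\varphi$ through the resolvent, i.e.\ by the Laplace transform and residues, or equivalently through the spectral projector onto the slow pair. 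This gives
\[
\widehat\varphi=\frac{\sin(\mu(\xi)t)}{\mu(\xi)}\Big(\widehat\varphi_1-\tfrac{I_\rho}{\rho}\,i\xi\,\widehat\psi_1\,(1+O(\xi^2))\Big)+\cos(\mu t)\,\widehat\varphi_0(1+O(\xi^2))+\mathcal{R},
\]
where $\mu(\xi)=c_{\mathrm D}\xi^2(1+O(\xi^2))$. The term $\mathcal{R}$ collects the fast-mode contributions and the $\widehat\psi_0$ contributions, which carry extra powers of $\xi$. The sign and normalization of the $\widehat\psi_1$ coefficient is to be fixed by checking the $\xi\to0$ limit of the slow projector. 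The fast pair contributes bounded multipliers times $\widehat\varphi_j$, so $\|\mathcal{R}\|_{L^2}\lesssim t^{1/4}$. More precisely, the $\widehat\psi$ terms have amplitude $O(|\xi|/\xi^2)=O(|\xi|^{-1})$, and this yields exactly the $t^{1/4}$ term in the upper bound. The $\varphi_0$ term is bounded.

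\textbf{Scaling and profiles.} Substituting $\xi=\eta/\sqrt t$ gives
\[
\Big\|\frac{\sin(c_{\mathrm D}\xi^2t)}{c_{\mathrm D}\xi^2}\Big\|_{L^2_\xi(|\xi|\leqslant\varepsilon_0)}\approx t^{3/4}\,\|\tfrac{\sin(c_{\mathrm D}\eta^2)}{c_{\mathrm D}\eta^2}\|_{L^2},
\]
and the right-hand norm is finite. Its inverse transform is exactly $\sqrt t\,\mathcal{G}_0(x/\sqrt t)$. With the extra factor $i\xi$, the same substitution gives $t^{1/4}$ and the profile $\mathcal{G}_1(x/\sqrt t)$. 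The upper bounds then follow from $|\widehat f(\xi)|\leqslant\|f\|_{L^1}$ and $|\widehat f(\xi)-\widehat f(0)|\leqslant|\xi|\,\|f\|_{L^{1,1}}$.

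For the asymptotic relation in the first bullet, I write $\widehat\varphi_1(\xi)=P_{\varphi_1}+(\widehat\varphi_1(\xi)-P_{\varphi_1})$. The remainder is $o(1)$ uniformly on $|\xi|\leqslant t^{-1/2}R$, and I split into $|\xi|\leqslant t^{-1/2}R$ and its complement. The complement is controlled by $\int_{|\eta|\geqslant R}\eta^{-4}\,d\eta\to0$. This gives $o(t^{3/4})$.

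In the second bullet, with $P_{\varphi_1}=0$, I expand $\widehat\varphi_1(\xi)=i\xi M_{\varphi_1}+o(|\xi|)$ and $\widehat\psi_1(\xi)=P_{\psi_1}+o(1)$. The leading term is then $i\xi\big(M_{\varphi_1}-\tfrac{I_\rho}{\rho}P_{\psi_1}\big)\sin(c_{\mathrm D}\xi^2t)/(c_{\mathrm D}\xi^2)$, and the remainders are $o(t^{1/4})$ by the same splitting. In both cases the lower bounds follow from the triangle inequality: the profile norm is $t^{3/4}|P_{\varphi_1}|\,\|\mathcal{G}_0\|_{L^2}$ (respectively $t^{1/4}|\cdots|\,\|\mathcal{G}_1\|_{L^2}$), and all remainders are $o$ of the corresponding rate.

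\textbf{Main obstacle.} The hard step is replacing $\mu(\xi)=c_{\mathrm D}\xi^2(1+O(\xi^2))$ by $c_{\mathrm D}\xi^2$ inside the oscillation. The phase error is $O(t\xi^4)$, which is not small on the whole low-frequency zone. My plan is to split at $|\xi|=t^{-1/4-\delta}$.
\begin{itemize}
\item On $|\xi|\leqslant t^{-1/4-\delta}$, the bound $|\sin a-\sin b|\leqslant|a-b|\lesssim t\xi^4$ gives an error of size $t\xi^2$. Its $L^2$ norm over $|\xi|\leqslant t^{-1/2}$ is about $t^{3/4}\cdot t^{-1}\cdots$, which is harmless.
\item On $t^{-1/2}\leqslant|\xi|\leqslant t^{-1/4-\delta}$, I instead use the two bounds $|\sin a-\sin b|\leqslant\min(2,t\xi^4)$. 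Since $\xi^{-2}\in L^2(|\xi|\geqslant t^{-1/2})$ with norm about $t^{3/4}$, this contributes $t^{3/4}$ times something that vanishes as $R\to\infty$.
\item On $|\xi|\geqslant t^{-1/4-\delta}$, both the approximate and the true multipliers are $O(\xi^{-2})$, with $L^2$ norm about $t^{3/4-1/4\cdots}=o(t^{3/4})$.
\end{itemize}
The analogous bookkeeping with the factor $\xi$ gives $o(t^{1/4})$. A second, related issue is to justify the $\xi$-expansion of the slow spectral projector uniformly without explicit roots. I would handle this by a perturbation (Kato-type) argument on the first-order $4\times4$ reformulation with rescaled unknowns.
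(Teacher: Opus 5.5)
Your low-frequency argument is correct and follows the paper's architecture: the leading multiplier $\frac{\sin(\lambda_{\mathrm I,2}t)}{\lambda_{\mathrm I,2}}$ with coefficient $\widehat\varphi_1-\frac{I_\rho}{\rho}i\xi\widehat\psi_1$, replacement of $\lambda_{\mathrm I,2}$ by $c_{\mathrm D}\xi^2$, the scaling $\eta=\sqrt t\,\xi$, extraction of $P_{\varphi_1}$ or $M_{\varphi_1}-\frac{I_\rho}{\rho}P_{\psi_1}$, and the triangle inequality.

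\textbf{The gap: high frequencies when $c_{\mathrm S}\neq c_{\mathrm R}$.} Energy conservation at frequency $\xi$ bounds $\rho|\widehat\varphi_t|^2+I_\rho|\widehat\psi_t|^2+EI\xi^2|\widehat\psi|^2+K|i\xi\widehat\varphi-\widehat\psi|^2$ by its value at $t=0$. That initial value contains $I_\rho|\widehat\psi_1|^2+EI\xi^2|\widehat\psi_0|^2$. So the best it gives for $|\xi|\geqslant\varepsilon_0$ is
\[
|\widehat\varphi(t,\xi)|\lesssim|\widehat\varphi_0|+\langle\xi\rangle^{-1}|\widehat\varphi_1|+|\widehat\psi_0|+\langle\xi\rangle^{-1}|\widehat\psi_1|,
\]
that is, control by $\|(\psi_0,\psi_1)\|_{L^2\times H^{-1}}$. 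This is $Z_1$-regularity, which is enough when $c_{\mathrm S}=c_{\mathrm R}$. When $c_{\mathrm S}\neq c_{\mathrm R}$, however, the theorem only assumes $(\psi_0,\psi_1)\in H^{-1}\times(H^{-2}\cap L^{1,\sigma})$, and then the initial energy may be infinite. Energy cannot see the one-derivative gain that comes from the non-resonance of the shear and rotational waves.

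What you need is the explicit multiplier bound from \eqref{Rep-varphi}. The $\widehat\psi_j$-coefficients are $\frac{K}{\rho}\frac{i\xi}{\lambda_{\mathrm I,1}^2-\lambda_{\mathrm I,2}^2}$ times $\cos(\lambda_{\mathrm I,k}t)$ or $\frac{\sin(\lambda_{\mathrm I,k}t)}{\lambda_{\mathrm I,k}}$. By \eqref{Eq-01}, $\lambda_{\mathrm I,1}^2-\lambda_{\mathrm I,2}^2\approx|\xi|^2$, which turns these into $\langle\xi\rangle^{-1}|\widehat\psi_0|+\langle\xi\rangle^{-2}|\widehat\psi_1|$. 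On $\mathcal Z_{\bdd}(\varepsilon_0,N_0)$ the coefficients stay bounded because $\lambda_{\mathrm I,1}\neq\lambda_{\mathrm I,2}$. Your fallback citation of Theorem \ref{Theorem-Lq} at $(p,q)=(2,2)$ does not literally close this either, since $Y^{\epsilon}_{2,2}=H^{-1+\epsilon}\times H^{-2+\epsilon}$ carries the $\epsilon$-loss. At $p=q=2$, do the Plancherel estimate directly, which has no loss.

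\textbf{Where your route differs from the paper's.} The remaining differences are technical, and your versions work.

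(i) Replacing $\lambda_{\mathrm I,2}$ by $c_{\mathrm D}\xi^2$. The paper rescales and applies dominated convergence with majorant $\min\{|\eta|^{2\ell},|\eta|^{2(\ell-2)}\}$ (Lemma \ref{Lemma-Approx}). Your three-zone split also works and even gives a rate, with two corrections to the bookkeeping. In the middle zone the smallness comes from $t\xi^4\leqslant t^{-4\delta}$, giving a contribution $\lesssim t^{3/4-4\delta}$, not from a limit $R\to\infty$. The outer zone contributes $t^{3/8+3\delta/2}$, so you need $\delta<\frac14$.

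(ii) Passing from the data to their moments. The paper works in physical space, using the mean value theorem with thresholds $|y|=t^{1/8}$ and $t^{1/16}$ (Lemmas \ref{Lemma-Approx-2}--\ref{Lemma-Approx-4}). Your Fourier-side argument uses continuity of $\widehat f$ and $\widehat f'$ at $0$ with a cut at $|\xi|=Rt^{-1/2}$, and is shorter and equally valid.

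(iii) The Kato-type spectral-projector argument is unnecessary. Cramer's rule already gives \eqref{Rep-varphi} in terms of $\lambda_{\mathrm I,1}^2$ and $\lambda_{\mathrm I,2}^2$ only. The expansions $\lambda_{\mathrm I,1}^2=c_{\mathrm O}^2+O(\xi^2)$ and $\lambda_{\mathrm I,2}=c_{\mathrm D}\xi^2(1+O(\xi^2))$ then give your coefficient, including the sign of the $\frac{I_\rho}{\rho}i\xi\widehat\psi_1$ term.
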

\begin{remark}
	When
	$P_{\varphi_1}=0$
	and
	$M_{\varphi_1}-\frac{I_{\rho}}{\rho}P_{\psi_1}=0$,
	by additionally assuming $\varphi_1\in L^{1,2}$ and $\psi_1\in L^{1,1}$, we are able to prove the boundedness $\|\varphi(t,\cdot)\|_{L^2}\lesssim 1$, which lies beyond the scope of the present paper.
\end{remark}

\begin{coro}\label{Coro-L-infty}
	Suppose that
	$(\varphi_0,\varphi_1)\in Z_1^{0}$
	and
	$(\psi_0,\psi_1)\in Z_{\psi,\star}^{0}$
	such that
	$P_{\varphi_1}\neq0$
	for the classical Timoshenko system
	\eqref{Eq-Timoshenko}.
	Assume additionally
	\begin{align*}
		\mathrm{supp}\,
		(\varphi_0,\varphi_1,\psi_0,\psi_1)
		\subset[-L,L]
		\ \ \mbox{for some}\ \ 
		L>0.
	\end{align*}
	Then,
	the transversal displacement
	$\varphi$
	satisfies the following $L^q$-growth estimate with any $q\geqslant 2$:
	\begin{align*}
		\|\varphi(t,\cdot)\|_{L^q}
		\gtrsim
		t^{\frac14+\frac{1}{q}}|P_{\varphi_1}|
	\end{align*}
	for sufficiently large time.
\end{coro}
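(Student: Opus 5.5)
The plan is to derive Corollary \ref{Coro-L-infty} from the lower bound in Theorem \ref{Thm-01} by an interpolation argument combined with finite speed of propagation. The Timoshenko system \eqref{Eq-Timoshenko} is a hyperbolic system whose characteristic speeds are $c_{\mathrm S}$ and $c_{\mathrm R}$. Set $c_{\max}:=\max\{c_{\mathrm S},c_{\mathrm R}\}$. Since the data are supported in $[-L,L]$, the solution satisfies
\[
\mathrm{supp}\,\varphi(t,\cdot)\subset[-L-c_{\max}t,\,L+c_{\max}t].
\]
This follows from the standard domain-of-dependence energy argument for symmetric hyperbolic systems. The lower-order coupling term $K\psi$ does not affect propagation speeds, and the low regularity of the data can be handled by density, because the support property is preserved under limits in $\mathcal S'$.

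Next, I apply H\"older's inequality on a set of measure $|\Omega_t|\leqslant 2(L+c_{\max}t)\lesssim t$ for $t\gg1$. For any $q\geqslant2$ this gives
\[
\|\varphi(t,\cdot)\|_{L^2}\leqslant |\Omega_t|^{\frac12-\frac1q}\,\|\varphi(t,\cdot)\|_{L^q}\lesssim t^{\frac12-\frac1q}\|\varphi(t,\cdot)\|_{L^q}.
\]
Since $P_{\varphi_1}\neq0$ and the data lie in $Z_1^0\times Z_{\psi,\star}^0$ (compactly supported $L^2$ or $H^{-1}$ data are in $L^1$ where needed), the first bullet of Theorem \ref{Thm-01} yields $\|\varphi(t,\cdot)\|_{L^2}\gtrsim t^{\frac34}|P_{\varphi_1}|$ for large $t$. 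Combining the two estimates gives
\[
\|\varphi(t,\cdot)\|_{L^q}\gtrsim t^{\frac34-\frac12+\frac1q}|P_{\varphi_1}|=t^{\frac14+\frac1q}|P_{\varphi_1}|.
\]
The case $q=+\infty$ works the same way, with exponent $\frac14$.

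The main obstacle is justifying finite speed of propagation rigorously for the rough data in $Z_1^0$. I would first treat smooth compactly supported data using the localized energy over cones $|x|\leqslant L+c_{\max}(T-t)$. There the boundary flux is nonnegative because the principal symbol has speeds at most $c_{\max}$, and the zeroth-order term is absorbed by Gronwall's inequality. I would then pass to the limit using mollified data, which keeps supports in $[-L-\delta,L+\delta]$ and converges in the relevant spaces. Continuity of the solution map in $\mathcal S'$ then follows from the Fourier representation.

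A second point to check is that the $t^{\frac34}$ lower bound is really available under these assumptions. It is: compact support together with $\varphi_1\in H^{-1}$ makes $\widehat{\varphi_1}$ analytic, so $P_{\varphi_1}$ is well defined, and $L^{1}$-membership can be assumed or obtained as in the hypothesis $Z_1^0$.
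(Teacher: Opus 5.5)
Your proposal is correct and follows the paper's proof. You combine finite speed of propagation with speed $\max\{c_{\mathrm S},c_{\mathrm R}\}$ (Proposition \ref{Prop-FPPS}), H\"older's inequality on a support of length $\lesssim t$, and the lower bound $t^{\frac34}|P_{\varphi_1}|$ from Theorem \ref{Thm-01}; the paper does the same H\"older step with $L^{2q}$ and conjugate exponents, and only asserts the finite propagation property via the localized energy method where you sketch its justification.
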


\begin{theorem}\label{Thm-02}
	Suppose that
	$(\varphi_0,\varphi_1)\in Z_{\varphi,\sharp}^0$
	and
	$(\psi_0,\psi_1)\in Z_{3}$
	such that
	$P_{\varphi_1}\neq0$
	for the classical Timoshenko system
	\eqref{Eq-Timoshenko},
	where
	\[
	Z^0_{\varphi,\sharp}
	:=
	\begin{cases}
		Z_2^0
		&\mbox{if}\ \ c_{\mathrm S}\neq c_{\mathrm R},
		\\
		Z_1^0
		&\mbox{if}\ \ c_{\mathrm S}=c_{\mathrm R}.
	\end{cases}
	\]
	Then,
	the rotation angle
	$\psi$
	satisfies the following optimal growth estimate:
	\begin{align*}
		t^{\frac14}|P_{\varphi_1}|
		\lesssim
		\|\psi(t,\cdot)\|_{L^2}
		\lesssim
		t^{\frac14}
		\|(\varphi_0,\varphi_1)\|_{Z^0_{\varphi,\sharp}}
		+
		\|(\psi_0,\psi_1)\|_{Z_{\psi,3}}
	\end{align*}
	for sufficiently large time,
	and the following asymptotic relation:
	\begin{align*}
		\lim\limits_{t\to+\infty}t^{-\frac14}\left\|
		\psi(t,\cdot)
		-
		\ml{G}_{1}
		\left(
		\tfrac{\cdot}{\sqrt t}
		\right)
		P_{\varphi_1}
		\right\|_{L^2}
		=0.
	\end{align*}
\end{theorem}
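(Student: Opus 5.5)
We work in Fourier space and express $\widehat{\psi}$ through the solution operator of \eqref{Eq-Timoshenko}. We avoid explicit characteristic roots, as in the analysis behind Theorem \ref{Thm-01}.

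\textbf{Step 1: low-frequency formula.} Taking the Fourier transform gives
\[
\rho\widehat{\varphi}_{tt}+K\xi^2\widehat{\varphi}-iK\xi\widehat{\psi}=0,\qquad
I_\rho\widehat{\psi}_{tt}+(EI\xi^2+K)\widehat{\psi}-iK\xi\widehat{\varphi}=0 .
\]
The characteristic polynomial is $\rho I_\rho\lambda^4+(\rho(EI\xi^2+K)+KI_\rho\xi^2)\lambda^2+EIK\xi^4=0$. For $|\xi|\leqslant\varepsilon_0$ it has two branches:
\begin{itemize}
\item a slow, plate-type pair $\lambda=\pm i c_{\mathrm D}\xi^2(1+O(\xi^2))$;
\item a fast pair $\lambda=\pm i c_{\mathrm O}(1+O(\xi^2))$.
\end{itemize}
The second equation shows that on the slow branch $\widehat{\psi}\approx i\xi\widehat{\varphi}$, i.e. $\psi\approx\varphi_x$, which is the Timoshenko compatibility.

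We project the data onto the slow eigenspace using the Hermite/Cayley--Hamilton representation of $e^{tA(\xi)}$ with the $\xi$-expansion of the eigenprojections. This yields
\[
\chi(\xi)\widehat{\psi}(t,\xi)=\chi(\xi)\, i\xi\,\frac{\sin(c_{\mathrm D}\xi^2t)}{c_{\mathrm D}\xi^2}\,\widehat{\varphi}_1(\xi)+\chi(\xi)\widehat{R}(t,\xi),
\]
with the following structure:
\begin{itemize}
\item The leading $\widehat{\varphi}_1$ term carries amplitude $|\xi|^{-1}$.
\item $\widehat{\varphi}_0$ enters through $i\xi\cos(c_{\mathrm D}\xi^2t)$ and remainder terms, all with bounded amplitude.
\item $\widehat{\psi}_0$ and $\widehat{\psi}_1$ enter through bounded slow-mode terms, together with fast-oscillating $\cos(c_{\mathrm O}t)$ and $\sin(c_{\mathrm O}t)$ terms that are bounded.
\item The corrections to the leading term are $O(|\xi|)\cdot\frac{|\sin(c_{\mathrm D}\xi^2t)|}{\xi^2}\cdot|\xi|$, so they are bounded relative to the main term.
\end{itemize}
We verify this expansion directly from the system, as was done for $\varphi$ in Theorem \ref{Thm-01}.

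\textbf{Step 2: upper bound.}
\begin{itemize}
\item The leading term has $L^2$ norm controlled by $\|\widehat{\varphi}_1\|_{L^\infty}\big\|\chi\,|\sin(c_{\mathrm D}\xi^2t)|/|\xi|\big\|_{L^2}$. The substitution $\xi=\eta/\sqrt t$ gives the bound $\approx t^{1/4}$, and $\|\widehat{\varphi}_1\|_{L^\infty}\leqslant\|\varphi_1\|_{L^1}$.
\item The bounded-amplitude $\varphi_0$ terms are controlled by $\|\varphi_0\|_{L^2}$.
\item The $\psi$-data terms with bounded amplitude give $O(1)$, controlled by the $Z_3$ norm.
\item Away from low frequencies, the energy estimate and the high-frequency analysis from Theorem \ref{Theorem-Lq} apply. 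With $(p,q)=(2,2)$ they give uniform bounds: by $\|\psi_0\|_{L^2}+\|\psi_1\|_{H^{-1}}$, and by $\|\varphi_0\|_{H^{-1}}+\|\varphi_1\|_{H^{-2}}$ in the non-equal speed case. This is exactly where $Z^0_{\varphi,\sharp}$ arises.
\end{itemize}
This proves the right-hand inequality.

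\textbf{Step 3: asymptotics and lower bound.} Decompose $\widehat{\varphi}_1(\xi)=P_{\varphi_1}+(\widehat{\varphi}_1(\xi)-P_{\varphi_1})$. For $\varphi_1\in L^1$ we have $\widehat{\varphi}_1(\xi)-P_{\varphi_1}\to0$ as $\xi\to0$, without a rate. So we split $|\xi|\leqslant\delta$ and $|\xi|>\delta$:
\[
t^{-\frac12}\int_{|\xi|\leqslant\varepsilon_0}\frac{\sin^2(c_{\mathrm D}\xi^2t)}{\xi^2}|\widehat{\varphi}_1-P_{\varphi_1}|^2\,\mathrm{d}\xi
\leqslant C\sup_{|\xi|\leqslant\delta}|\widehat{\varphi}_1-P_{\varphi_1}|^2+Ct^{-\frac12}\delta^{-1}\|\varphi_1\|_{L^1}^2 .
\]
Letting $t\to\infty$ and then $\delta\to0$ shows this term is $o(t^{1/4})$. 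The same argument, sending $\varepsilon_0\to\infty$ after scaling, removes the cutoff $\chi$ on the profile. The other low-frequency terms are $O(1)=o(t^{1/4})$, and high frequencies are bounded.

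By Plancherel and scaling, the remaining profile $i\xi\frac{\sin(c_{\mathrm D}\xi^2t)}{c_{\mathrm D}\xi^2}P_{\varphi_1}$ is exactly $\widehat{\ml{G}_1(\cdot/\sqrt t)}\,P_{\varphi_1}$, because $\mathcal F[\ml G_1(\cdot/\sqrt t)](\xi)=\sqrt t\,(i\sqrt t\xi)\frac{\sin(c_{\mathrm D}t\xi^2)}{c_{\mathrm D}t\xi^2}$. This proves the limit. Moreover $\|\ml G_1(\cdot/\sqrt t)\|_{L^2}=t^{1/4}\|\ml G_1\|_{L^2}$ with $\|\ml G_1\|_{L^2}>0$. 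The lower bound $t^{1/4}|P_{\varphi_1}|$ then follows from the triangle inequality for large $t$.

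\textbf{Main obstacle.} The main difficulty is Step 1: proving the coupled low-frequency expansion without explicit roots. Concretely, one must show that the leading coefficient of $\widehat{\varphi}_1$ in $\widehat{\psi}$ is $i\xi\sin(c_{\mathrm D}\xi^2t)/(c_{\mathrm D}\xi^2)$, with an error of size $O(|\xi|^{0})$ uniformly in $t$, and that the fast $c_{\mathrm O}$ modes carry only bounded amplitudes. To do this I would diagonalize via the eigenprojections $\Pi_\pm^{\mathrm{slow}}(\xi)$, computed by perturbation theory in $\xi$. I would then control the root correction $\xi^2 t\cdot O(\xi^2)$ inside the sine by the mean-value estimate $|\sin a-\sin b|\leqslant|a-b|$. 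Since $t\xi^4\cdot|\xi|^{-1}$ scales as $t^{-1/2}$ relative to the main term after the substitution $\xi=\eta/\sqrt t$, this correction is negligible.
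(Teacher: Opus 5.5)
The gap is in Step 1, at the one point where the conservative system genuinely differs from the damped one: replacing $\lambda_{\mathrm{I},2}$ by $c_{\mathrm D}\xi^2$ inside the sine. You assert two things.
You claim that the $\widehat\varphi_1$-coefficient of $\chi\widehat\psi$ equals $i\xi\sin(c_{\mathrm D}\xi^2t)/(c_{\mathrm D}\xi^2)$ up to an error that is $O(1)$ uniformly in $t$.
You also claim that the mean-value bound $|\sin a-\sin b|\leqslant|a-b|$ makes the root correction negligible. Neither is true.

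Since $\lambda_{\mathrm{I},2}=c_{\mathrm D}\xi^2\big(1-\frac12(\frac{EI}{K}+\frac{I_{\rho}}{\rho})\xi^2+O(\xi^4)\big)$, the phase error $|\lambda_{\mathrm{I},2}-c_{\mathrm D}\xi^2|\,t$ is already of order one at $|\xi|\approx t^{-1/4}$. There the amplitude $|\xi|/(c_{\mathrm D}\xi^2)$ is still $\approx t^{1/4}$. For $t^{-1/4}\lesssim|\xi|\leqslant\varepsilon_0$ the two phases decorrelate, so the error reaches size $t^{1/4}$ pointwise. Its squared $L^2$ norm is of order $\int_{t^{-1/4}}^{\varepsilon_0}\xi^{-2}\,\mathrm d\xi\approx t^{1/4}$, so it is not $O(1)$.

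The mean-value bound by itself is far worse:
\[
\Big\|\chi(\xi)\,\xi\,\frac{\sin(\lambda_{\mathrm{I},2}t)-\sin(c_{\mathrm D}\xi^2t)}{c_{\mathrm D}\xi^2}\Big\|_{L^2}^2\lesssim\int_{|\xi|\leqslant\varepsilon_0}t^2\xi^6\,\mathrm d\xi\approx t^2,
\]
which dominates the main term $t^{1/2}$. Your scaling heuristic only holds for bounded $\eta=\sqrt t\,\xi$. After scaling, the integral runs over $|\eta|\leqslant\varepsilon_0\sqrt t$, and the ratio $t\xi^4=\eta^4/t$ between the mean-value bound and the amplitude $|\xi|^{-1}$ is large once $|\eta|\gg t^{1/4}$. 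This is exactly the obstruction the paper flags in the proof of Lemma \ref{Lemma-Approx}: without the factor $\mathrm e^{-\frac{\gamma}{2\rho}|\xi|^2t}$, the Fourier-side error $|\xi|^4t$ cannot be absorbed.

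The missing idea is to never use the mean-value bound where it exceeds the trivial one. You have two options.
\begin{itemize}
\item Use $|\sin a-\sin b|\leqslant\min\{2,|a-b|\}$ and split at $|\xi|=t^{-1/4}$. This gives a squared error $\lesssim\int_{|\xi|\leqslant t^{-1/4}}t^2\xi^6\,\mathrm d\xi+\int_{|\xi|\geqslant t^{-1/4}}\xi^{-2}\,\mathrm d\xi\approx t^{1/4}=o(t^{1/2})$.
\item Argue as the paper does: scale $\eta=\sqrt t\,\xi$ and apply dominated convergence with the majorant $\min\{\eta^2,\eta^{-2}\}$.
\end{itemize}
With this repair, your remainder $\widehat R$ is only $O(t^{1/8})$ in $L^2$, not $O(1)$. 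That is still $o(t^{1/4})$, so Step 3 goes through once the bound $O(1)$ is replaced by $o(t^{1/4})$. For the upper bound you can skip the comparison entirely: $|\xi|\,|\sin(\lambda_{\mathrm{I},2}t)|/\lambda_{\mathrm{I},2}\lesssim\min\{|\xi|t,|\xi|^{-1}\}$, whose $L^2$ norm on $|\xi|\leqslant\varepsilon_0$ is $\lesssim t^{1/4}$.

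The rest of your argument is sound and in places simpler than the paper's. Your $\delta$-splitting for $\widehat\varphi_1-P_{\varphi_1}$ replaces the physical-space argument of Lemma \ref{Lemma-Approx-2} and needs only $\varphi_1\in L^1$. The exact identity $\|\ml{G}_1(\cdot/\sqrt t)\|_{L^2}=t^{1/4}\|\ml{G}_1\|_{L^2}$ replaces Lemma \ref{Lemma-Kernel}.

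Two small fixes are still needed. When $c_{\mathrm S}\neq c_{\mathrm R}$ you only have $\varphi_0\in H^{-1}$, so the low-frequency $\varphi_0$-terms must be bounded by $\|\chi\widehat\varphi_0\|_{L^2}\lesssim\|\varphi_0\|_{H^{-1}}$, not by $\|\varphi_0\|_{L^2}$. The high-frequency bounds should come from pointwise multiplier bounds plus Plancherel, as in the paper. The energy requires more regularity, and Theorem \ref{Theorem-Lq} at $(p,q)=(2,2)$ loses $\epsilon$.
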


\begin{remark}
	When
	$P_{\varphi_1}=0$,
	by additionally assuming $\varphi_1\in L^{1,1}$, we are able to prove the boundedness $\|\psi(t,\cdot)\|_{L^2}\lesssim 1$, which lies beyond the scope of the present paper.
\end{remark}
\begin{remark}
	The polynomial growth
	derived in this paper
	is generated entirely
	by the low-frequency structure
	of the classical Timoshenko system.
	In particular,
	the low-frequency zone
	$\mathcal{Z}_{\mathrm{int}}(\varepsilon_0)$
	produces the leading asymptotic behavior.
\end{remark}
\begin{remark}
	The asymptotic profiles
	$\ml{G}_0(y)$
	and
	$\ml{G}_1(y)$
	are generated by
	the oscillatory multiplier
	\[
	\chi_{\intt}(\xi)\frac{\sin(c_{\mathrm D} |\xi|^2 t)}
	{c_{\mathrm D} |\xi|^2},
	\]
	which reveals
	a hidden plate-type structure
	in the low-frequency regime
	of the classical Timoshenko system \eqref{Eq-Timoshenko}.
	Therefore,
	the leading large time behavior
	is governed not by the hyperbolic wave propagation itself,
	but by the associated quadratic oscillations
	and singular low-frequency amplitudes.
\end{remark}

\begin{remark}\label{Rem-2.8}
	The polynomial growth derived in this paper
	should not be interpreted
	as an instability phenomenon
	of the classical Timoshenko system.
	Indeed,
	all characteristic roots
	remain purely imaginary,
	and the natural energy
	is exactly conserved in time.
	The coincidence of the growth rates
	and asymptotic profiles
	with those for the dissipative Timoshenko system \eqref{Eq-dissipative-Timoshenko}
	in \cite{Chen=2026}
	shows that the frictional damping mechanism
	does not generate
	the leading large time growth.
	Instead,
	the polynomial growth
	is already contained intrinsically
	in the conservative structure
	of the classical Timoshenko system. This relation is further clarified
	in the final section,
	where we discuss
	a large time vanishing dissipation limit
	between the dissipative
	and classical Timoshenko systems.
\end{remark}

\section{Preliminaries}\setcounter{equation}{0}\label{Section-Preliminary}

\subsection{Representation of solutions in the Fourier space}

\hspace{5mm}
To reveal the hidden plate-type structure
of the classical Timoshenko system
\eqref{Eq-Timoshenko},
we first reduce the coupled system
to a fourth-order equation.
More precisely,
following the reduction procedure
in \cite[Section 3.1]{Chen=2026}
(see also
\cite{Chen-Ikehata=2023,Chen-Takeda=2023}
for related thermoelastic systems),
we eliminate the coupling terms
by applying the Klein-Gordon operator
$(I_{\rho}\,\partial_t^2-EI\partial_x^2+K)$
to \eqref{Eq-Timoshenko}$_1$
and the free wave operator
$(\rho\,\partial_t^2-K\partial_x^2)$
to \eqref{Eq-Timoshenko}$_2$.

As a consequence,
each component
$u\in\{\varphi,\psi\}$
satisfies the following
fourth-order equation:
\begin{align*}
	\begin{cases}
		\displaystyle{
			\frac{I_{\rho}}{K}u_{tttt}
			-\left(
			\frac{EI}{K}
			+\frac{I_{\rho}}{\rho}
			\right)
			u_{ttxx}
			+\frac{EI}{\rho}u_{xxxx}
			+u_{tt}
			=0,
		}
		&x\in\mathbb{R},\ t>0,
		\\[0.5em]
		(u,u_t,u_{tt},u_{ttt})(0,x)=(u_0,u_1,u_2,u_3)(x),
		&x\in\mathbb{R}.
	\end{cases}
\end{align*}
where the higher-order initial data
are given by
\begin{align*}
	u_2(x)
	:=
	\begin{cases}
		\displaystyle{
			\frac{K}{\rho}
			\big(
			\varphi''_0(x)-\psi'_0(x)
			\big)
		}
		&\mbox{if}\quad u=\varphi,
		\\[1em]
		\displaystyle{
			\frac{1}{I_{\rho}}
			\Big[
			EI\psi''_0(x)
			+
			K\big(
			\varphi'_0(x)-\psi_0(x)
			\big)
			\Big]
		}
		&\mbox{if}\quad u=\psi,
	\end{cases}
\end{align*}
and
\begin{align*}
	u_3(x)
	:=
	\begin{cases}
		\displaystyle{
			\frac{K}{\rho}
			\big(
			\varphi''_1(x)-\psi'_1(x)
			\big)
		}
		&\mbox{if}\quad u=\varphi,
		\\[1em]
		\displaystyle{
			\frac{1}{I_{\rho}}
			\Big[
			EI\psi''_1(x)
			+
			K\big(
			\varphi'_1(x)-\psi_1(x)
			\big)
			\Big]
		}
		&\mbox{if}\quad u=\psi.
	\end{cases}
\end{align*}
\begin{remark}
	The partial differential operator
	\begin{align*}
		\mathcal{L}_{\mathrm{Cla}}(\partial_t,\partial_x)
		:=
		\frac{I_{\rho}}{K}\partial_t^4
		-
		\left(
		\frac{EI}{K}
		+
		\frac{I_{\rho}}{\rho}
		\right)
		\partial_t^2\partial_x^2
		+
		\frac{EI}{\rho}\partial_x^4
		+
		\partial_t^2
	\end{align*}
	associated with the classical Timoshenko system
	\eqref{Eq-Timoshenko}
	has the same leading structure
	as the operator corresponding to
	the dissipative Timoshenko system
	\eqref{Eq-dissipative-Timoshenko}
	(see \cite[Equation (3.1)]{Chen=2026}),
	namely,
	\begin{align*}
		\mathcal{L}_{\mathrm{Dis}}
		(\partial_t,\partial_x;\gamma)
		:=
		\mathcal{L}_{\mathrm{Cla}}(\partial_t,\partial_x)
		+
		\frac{\gamma}{K}
		\partial_t
		\left(
		\partial_t^2
		-
		\frac{K}{\rho}\partial_x^2
		\right).
	\end{align*}
	This common structure explains
	why both systems exhibit
	the same leading asymptotic growth rates.
	However,
	when $\gamma=0$,
	the dissipative correction disappears,
	and the low-frequency regularization mechanism
	is no longer available.
\end{remark}
\begin{remark}
	Although both
	$\varphi$
	and
	$\psi$
	satisfy the same fourth-order equation,
	their asymptotic behaviors differ significantly.
	In particular,
	the growth rates,
	the asymptotic profiles,
	and the required regularities
	of the initial data
	are different.
	This phenomenon originates from
	the distinct higher-order initial data
	$u_2$
	and
	$u_3$
	generated by the coupling structure
	of the original Timoshenko system.
	Consequently,
	the low-frequency singular structures
	appearing in the Fourier representations
	of
	$\varphi$
	and
	$\psi$
	are also different.
\end{remark}

Applying the partial Fourier transform
with respect to $x$,
one finds that
$\widehat{u}$
solves
\begin{align*}
	\begin{cases}
		\displaystyle{
			\frac{I_{\rho}}{K}\widehat{u}_{tttt}
			+\left[
			\left(
			\frac{EI}{K}
			+
			\frac{I_{\rho}}{\rho}
			\right)|\xi|^2
			+1
			\right]\widehat{u}_{tt}
			+
			\frac{EI}{\rho}|\xi|^4\widehat{u}
			=0,
		}
		&\xi\in\mathbb{R},\ t>0,
		\\[0.5em]
		(\widehat{u},
		\widehat{u}_t,
		\widehat{u}_{tt},
		\widehat{u}_{ttt})(0,\xi)
		=
		(\widehat{u}_0,
		\widehat{u}_1,
		\widehat{u}_2,
		\widehat{u}_3)(\xi),
		&\xi\in\mathbb{R}.
	\end{cases}
\end{align*}
Its characteristic equation is
\begin{align}\label{Quartic-Eq}
	\frac{I_{\rho}}{K}\lambda^4
	+
	\left[
	\left(
	\frac{EI}{K}
	+
	\frac{I_{\rho}}{\rho}
	\right)|\xi|^2
	+1
	\right]\lambda^2
	+
	\frac{EI}{\rho}|\xi|^4
	=
	0.
\end{align}
Since the discriminant associated with
$\lambda^2$
is strictly positive,
the quartic equation
\eqref{Quartic-Eq}
has four purely imaginary roots.
We denote them by
\[
\lambda_{1,2}
=
\pm i\lambda_{\mathrm{I},1}
\ \ \mbox{and}\ \ 
\lambda_{3,4}
=
\pm i\lambda_{\mathrm{I},2},
\]
where
\begin{align*}
	\lambda_{\mathrm{I},1}^2
	&=
	\frac{K}{2I_{\rho}}
	\left(
	A(|\xi|)
	+
	\sqrt{
		[A(|\xi|)]^2
		-
		\frac{4EI}{\rho K}|\xi|^4
	}\
	\right),
	\\[0.5em]
	\lambda_{\mathrm{I},2}^2
	&=
	\frac{K}{2I_{\rho}}
	\left(
	A(|\xi|)
	-
	\sqrt{
		[A(|\xi|)]^2
		-
		\frac{4EI}{\rho K}|\xi|^4
	}\
	\right),
\end{align*}
with
\[
A(|\xi|)
:=
\left(
\frac{EI}{K}
+
\frac{I_{\rho}}{\rho}
\right)|\xi|^2
+1.
\]
The low-frequency asymptotic behavior
of the Timoshenko system
is generated by
$\lambda_{\mathrm{I},2}$,
whose quadratic oscillatory structure
produces the hidden plate-type behavior
appearing throughout the paper.

Let us derive the explicit Fourier representations
of the two components
$\widehat{\varphi}$
and
$\widehat{\psi}$.
Using the expressions of
$\widehat{u}_2(\xi)$
and
$\widehat{u}_3(\xi)$,
and applying Cramer's rule,
we obtain, for
$\xi\neq0$,
\begin{align}
	\widehat{\varphi}&= 
	\frac{\frac{K}{\rho}(|\xi|^{2}\widehat{\varphi}_0 + i\xi\widehat{\psi}_0) - \lambda_{\mathrm{I},2}^{2}\,\widehat{\varphi}_0}
	{\lambda_{\mathrm{I},1}^{2} - \lambda_{\mathrm{I},2}^{2}}\cos(\lambda_{\mathrm{I},1}t)+\frac{\frac{K}{\rho}(|\xi|^{2}\widehat{\varphi}_1 + i\xi\widehat{\psi}_1) - \lambda_{\mathrm{I},2}^{2}\,\widehat{\varphi}_1}
	{\lambda_{\mathrm{I},1}^{2} - \lambda_{\mathrm{I},2}^{2}}\,\frac{\sin(\lambda_{\mathrm{I},1}t)}{\lambda_{\mathrm{I},1}} \notag\\
	&\,\quad- \frac{\frac{K}{\rho}(|\xi|^{2}\widehat{\varphi}_0 + i\xi\widehat{\psi}_0) - \lambda_{\mathrm{I},1}^{2}\,\widehat{\varphi}_0}
	{\lambda_{\mathrm{I},1}^{2}-\lambda_{\mathrm{I},2}^{2} }\cos(\lambda_{\mathrm{I},2}t)-\frac{\frac{K}{\rho}(|\xi|^{2}\widehat{\varphi}_1 + i\xi\widehat{\psi}_1) - \lambda_{\mathrm{I},1}^{2}\,\widehat{\varphi}_1}
	{ \lambda_{\mathrm{I},1}^{2}-\lambda_{\mathrm{I},2}^{2}}\,\frac{\sin(\lambda_{\mathrm{I},2}t)}{\lambda_{\mathrm{I},2}}\notag\\
	&=:\sum\limits_{j\in\{0,1\}}\widehat{K}_{\varphi,j}(t,|\xi|)\widehat{\varphi}_j+\sum\limits_{j\in\{0,1\}}\widehat{K}_{\psi,j}(t,|\xi|)\widehat{\psi}_j,\label{Rep-varphi}
\end{align}
moreover,
\begin{align}
	\widehat{\psi}&= 
	-\frac{ \frac{K}{I_{\rho}}i\xi\widehat{\varphi}_0 +\big( \lambda_{\mathrm{I},2}^{2} - \frac{E I|\xi|^{2} + K}{I_{\rho}} \big)\widehat{\psi}_0 }
	{  \lambda_{\mathrm{I},1}^{2}-\lambda_{\mathrm{I},2}^{2} } \cos(\lambda_{\mathrm{I},1}t)- \frac{ \frac{K}{I_{\rho}}i\xi\widehat{\varphi}_1 + \big( \lambda_{\mathrm{I},2}^{2} - \frac{E I|\xi|^{2} + K}{I_{\rho}} \big)\widehat{\psi}_1 }
	{ \lambda_{\mathrm{I},1}^{2}-\lambda_{\mathrm{I},2}^{2} }\, \frac{\sin(\lambda_{\mathrm{I},1}t)}{\lambda_{\mathrm{I},1}}\notag\\
	&\,\quad+ \frac{ \frac{K}{I_{\rho}}i\xi\widehat{\varphi}_0 + \big( \lambda_{\mathrm{I},1}^{2} - \frac{E I|\xi|^{2} + K}{I_{\rho}} \big)\widehat{\psi}_0 }
	{ \lambda_{\mathrm{I},1}^{2} - \lambda_{\mathrm{I},2}^{2} } \cos(\lambda_{\mathrm{I},2}t)+ \frac{ \frac{K}{I_{\rho}}i\xi\widehat{\varphi}_1 + \big( \lambda_{\mathrm{I},1}^{2} - \frac{E I|\xi|^{2} + K}{I_{\rho}} \big)\widehat{\psi}_1 }
	{ \lambda_{\mathrm{I},1}^{2} - \lambda_{\mathrm{I},2}^{2} }\, \frac{\sin(\lambda_{\mathrm{I},2}t)}{\lambda_{\mathrm{I},2}}\notag\\
	&=:\sum\limits_{j\in\{0,1\}}\widehat{G}_{\varphi,j}(t,|\xi|)\widehat{\varphi}_j+\sum\limits_{j\in\{0,1\}}\widehat{G}_{\psi,j}(t,|\xi|)\widehat{\psi}_j.\label{Rep-psi}
\end{align}
Here and in what follows,
the quotient
$
\frac{\sin(\lambda_{\mathrm{I},2}t)}
{\lambda_{\mathrm{I},2}}
$
at
$\xi=0$
is understood in the limiting sense.
The Fourier multipliers introduced in
\eqref{Rep-varphi}
and
\eqref{Rep-psi}
will be analyzed separately
in the low-frequency and high-frequency regions.

\subsection{Asymptotic expansions of characteristic roots}

\hspace{5mm}
The characteristic roots exhibit different asymptotic structures in the low-frequency and high-frequency zones.

\begin{description}
	
	\item[Low-frequencies: $|\xi|\leqslant\varepsilon_0\ll1$.]
	
	A direct expansion yields
	\begin{align*}
		\lambda_{\mathrm{I},1}
		&=
		c_{\mathrm O}
		+
		\frac{1}{2c_{\mathrm O}}
		\left(
		\frac{EI}{I_\rho}
		+
		\frac{K}{\rho}
		\right)|\xi|^2
		+
		O(|\xi|^4),
		\\[0.5em]
		\lambda_{\mathrm{I},2}
		&=
		c_{\mathrm D}|\xi|^2
		+
		O(|\xi|^6).
	\end{align*}
	
	In particular,
	the quadratic oscillatory structure
	generated by
	$\lambda_{\mathrm{I},2}$
	produces the hidden plate-type behavior
	appearing in the large time asymptotics.
	
	Moreover,
	\begin{align*}
		\lambda_{\mathrm{I},1}^{2}
		-
		\frac{EI|\xi|^2+K}{I_\rho}
		=
		\frac{K}{\rho}|\xi|^2
		+
		O(|\xi|^4),
	\end{align*}
	where the cancellation
	of the zeroth-order terms
	plays an important role
	in reducing the singular structure
	of the Fourier amplitudes
	associated with
	$\widehat{\psi}$.
	
	\item[High-frequencies: $|\xi|\geqslant N_0\gg1$.]
	
	If
	$c_{\mathrm S}\neq c_{\mathrm R}$,
	then
	\begin{align*}
		\lambda_{\mathrm{I},1}
		&=
		\max\{c_{\mathrm S},c_{\mathrm R}\}|\xi|
		+
		O(|\xi|^{-1}),
		\\[0.5em]
		\lambda_{\mathrm{I},2}
		&=
		\min\{c_{\mathrm S},c_{\mathrm R}\}|\xi|
		+
		O(|\xi|^{-1}).
	\end{align*}
	
	If
	$c_{\mathrm S}=c_{\mathrm R}$,
	then
	\begin{align*}
		\lambda_{\mathrm{I},1}
		=
		c_{\mathrm S}|\xi|
		+
		\frac{c_{\mathrm O}}{2}
		+
		O(|\xi|^{-1}),
		\\[0.3em]
		\lambda_{\mathrm{I},2}
		=
		c_{\mathrm S}|\xi|
		-
		\frac{c_{\mathrm O}}{2}
		+
		O(|\xi|^{-1}).
	\end{align*}
	
Furthermore, one has the asymptotic relation
\begin{align}\label{Eq-01}
	\lambda_{\mathrm{I},1}^2-\lambda_{\mathrm{I},2}^2\approx
	\begin{cases}
		|\xi|^2
		&\mbox{if}\quad c_{\mathrm{S}}\neq c_{\mathrm{R}},\\[0.3em]
		|\xi|
		&\mbox{if}\quad c_{\mathrm{S}}=c_{\mathrm{R}}.
	\end{cases}
\end{align}
due to an additional cancellation
of the leading high-frequency terms.
This distinction reflects
the propagation coherence
in the equal speed configuration
and explains
the improved regularity requirements
in the equal speed case.
\end{description}

\subsection{Analytical tools}
\label{Sub-section-main-tools}

\hspace{5mm}
As a basic tool
for estimating oscillatory integrals soon afterwards,
we recall the classical van der Corput lemma
(cf.~\cite{Stein=1993}).

\begin{lemma}
	\label{Lemma-Van-der-Corput}
	
	Let
	$I\subset\mathbb{R}$
	be an interval.
	Suppose that
	$\Psi\in \ml{C}^2(I)$
	is real-valued and satisfies
	\[
	|\Psi''(\xi)|
	\geqslant
	\mu
	>0
	\ \ 
	\mbox{for all}
	\ \ 
	\xi\in I,
	\]
	and let
	$b\in \ml{C}^1(I)$
	be complex-valued.
	Then,
	\begin{align*}
		\left|
		\int_I
		\mathrm{e}^{i\Psi(\xi)}\,
		b(\xi)
		\,\mathrm d\xi
		\right|
		\lesssim
		\mu^{-1/2}
		\left(
		\|b\|_{L^\infty(I)}
		+
		\|b'\|_{L^1(I)}
		\right),
	\end{align*}
	where the implicit constant
	is independent of
	$\mu$.
\end{lemma}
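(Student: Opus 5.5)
The statement to be proved is the classical van der Corput lemma with an amplitude (Lemma \ref{Lemma-Van-der-Corput}). The plan is the standard two-step argument: first prove the estimate for $b\equiv1$, then add the amplitude by integrating by parts against the primitive of the phase.

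\emph{Step 1: the case $b\equiv 1$.} We claim that
\[
\Big|\int_J \mathrm{e}^{i\Psi(\xi)}\,\mathrm d\xi\Big|\leqslant C\mu^{-1/2}
\]
for every subinterval $J\subset I$, with $C$ an absolute constant.

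1. Replacing $\Psi$ by $-\Psi$ if needed, assume $\Psi''\geqslant\mu$ on $J$. Then $\Psi'$ is increasing, so it vanishes at most once, say at $\xi_0$ (if it never vanishes, take $\xi_0$ to be the endpoint where $|\Psi'|$ is smallest).
2. Fix $\delta>0$ and split $J$ into $\{|\xi-\xi_0|<\delta\}$ and its complement.
3. On $\{|\xi-\xi_0|<\delta\}$, bound the integral trivially by $2\delta$.
4. The complement consists of at most two intervals, on each of which $|\Psi'|\geqslant\mu\delta$ by the mean value theorem.
5. On such an interval, first treat the first-derivative estimate: if $\Psi'$ is monotone with $|\Psi'|\geqslant\lambda$, write $\mathrm{e}^{i\Psi}=(i\Psi')^{-1}\frac{\mathrm d}{\mathrm d\xi}\mathrm{e}^{i\Psi}$ and integrate by parts. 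The boundary terms are at most $2/\lambda$. The remaining term is bounded by $\int|(1/\Psi')'|=|1/\Psi'(\beta)-1/\Psi'(\alpha)|\leqslant1/\lambda$, using monotonicity of $1/\Psi'$. So the integral is at most $3/\lambda$.
6. Applying this with $\lambda=\mu\delta$ gives a total bound of $2\delta+6/(\mu\delta)$.
7. Choosing $\delta=\mu^{-1/2}$ gives the claimed bound $C\mu^{-1/2}$.

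\emph{Step 2: general amplitude $b\in\mathcal C^1(I)$.} Write $I=[\alpha,\beta]$. (If $I$ is unbounded, apply the argument on bounded subintervals and pass to the limit.)

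1. Set $F(\xi):=\int_\alpha^\xi \mathrm{e}^{i\Psi(\eta)}\,\mathrm d\eta$. By Step 1, $\sup_I|F|\leqslant C\mu^{-1/2}$, uniformly in $\xi$.
2. Integrate by parts:
\[
\int_I \mathrm{e}^{i\Psi}b\,\mathrm d\xi=F(\beta)b(\beta)-\int_I F(\xi)\,b'(\xi)\,\mathrm d\xi .
\]
3. The boundary term is bounded by $C\mu^{-1/2}\|b\|_{L^\infty(I)}$. The integral term is bounded by $C\mu^{-1/2}\|b'\|_{L^1(I)}$.
4. Adding these gives the lemma, with an implicit constant independent of $\mu$, $\Psi$ and $I$.

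The main point needing care is Step 1. The constant must be uniform: it must not depend on the length of $I$ or on the location of the stationary point. This uniformity comes from the monotonicity of $\Psi'$ together with the choice $\delta=\mu^{-1/2}$.
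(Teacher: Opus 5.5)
Your proposal is correct. It is the standard proof of van der Corput's lemma: the first-derivative bound $3/\lambda$ by integration by parts using the monotonicity of $1/\Psi'$, then the split around the critical point with $\delta=\mu^{-1/2}$, giving the constant $8$, and finally the amplitude handled by integrating by parts against $F(\xi)=\int_\alpha^\xi \mathrm{e}^{i\Psi(\eta)}\,\mathrm d\eta$. The paper gives no proof of its own and simply cites Stein's book, whose argument is exactly the one you reproduce.
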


We also recall
the following estimate
for weighted $L^1$ data
(cf.~\cite{Ikehata=2014}).

\begin{lemma}\label{Lemma-Ikehata}
	Suppose that
	$f\in L^{1,1}$.
	Then,
	\begin{align*}
		|\widehat{f}(\xi)|
		\lesssim
		|P_f|
		+
		|\xi|\,\|f\|_{L^{1,1}}.
	\end{align*}
\end{lemma}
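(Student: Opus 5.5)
The plan is to prove the bound in Lemma \ref{Lemma-Ikehata} directly from the integral definition of the Fourier transform, by splitting off the zeroth moment. Up to the normalizing constant of $\mathcal{F}_{x\to\xi}$, which only affects the implicit constant, we have $\widehat{f}(\xi)=\int_{\mathbb{R}}\mathrm{e}^{-ix\xi}f(x)\,\mathrm{d}x$. This integral is absolutely convergent, since $f\in L^{1,1}\subset L^1$. We add and subtract the value at $\xi=0$ and write
\begin{align*}
\widehat{f}(\xi)=\int_{\mathbb{R}}f(x)\,\mathrm{d}x+\int_{\mathbb{R}}\big(\mathrm{e}^{-ix\xi}-1\big)f(x)\,\mathrm{d}x
=P_f+\int_{\mathbb{R}}\big(\mathrm{e}^{-ix\xi}-1\big)f(x)\,\mathrm{d}x.
\end{align*}

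The only analytic input is the elementary Lipschitz bound for the exponential,
\begin{align*}
\big|\mathrm{e}^{-ix\xi}-1\big|=\left|\int_0^{x\xi}\mathrm{e}^{-is}\,\mathrm{d}s\right|\leqslant|x|\,|\xi|,
\end{align*}
valid for all $x,\xi\in\mathbb{R}$. Inserting this into the remainder integral and using $|x|\leqslant 1+|x|$ gives
\begin{align*}
|\widehat{f}(\xi)|\leqslant|P_f|+|\xi|\int_{\mathbb{R}}|x|\,|f(x)|\,\mathrm{d}x\leqslant|P_f|+|\xi|\,\|f\|_{L^{1,1}},
\end{align*}
which is the claimed estimate, with an implicit constant depending only on the Fourier normalization.

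I do not expect a genuine obstacle here; the only point requiring care is that the weight $(1+|x|)$ in $L^{1,1}$ is exactly what makes the remainder term finite. If one wants the sharper form used later for the first-moment regimes, the same argument extends naturally. Expanding $\mathrm{e}^{-ix\xi}=1-ix\xi+O(|x\xi|^2)$ identifies the next term, $i\xi M_f$ under the convention $M_f=\int(-x)f\,\mathrm{d}x$ up to sign and normalization, with a remainder controlled by $\|f\|_{L^{1,2}}$. For the lemma as stated, however, the first-order bound above suffices.
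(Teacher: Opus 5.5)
Your proof is correct, and it is essentially the standard argument. The paper gives no proof of its own and only cites Ikehata's 2014 work, where the estimate is obtained the same way: write $\widehat{f}(\xi)=P_f+\int_{\mathbb{R}}(\mathrm{e}^{-ix\xi}-1)f(x)\,\mathrm{d}x$ (there split into cosine and sine parts), then bound the remainder by $|\xi|\int_{\mathbb{R}}|x|\,|f(x)|\,\mathrm{d}x\leqslant|\xi|\,\|f\|_{L^{1,1}}$ using the mean value theorem.
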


\section{$L^q$-growth estimates of solutions}\setcounter{equation}{0}\label{Sec-Lq}

\subsection{Low-frequency estimates of dominant Fourier multipliers}

\hspace{5mm}
The representations
\eqref{Rep-varphi}
and
\eqref{Rep-psi}
show that
the leading low-frequency contributions
are generated by
\begin{align*}
\mbox{the oscillatory multiplier} \ \ \frac{\sin(\lambda_{\mathrm I,2}t)}
{\lambda_{\mathrm I,2}},
\end{align*}
whose low-frequency behavior
is governed by (see Lemma \ref{Lemma-Approx} later)
\begin{align*}
\mbox{the quadratic oscillation}\ \ \frac{\sin(c_{\mathrm D}|\xi|^2t)}
{c_{\mathrm D}|\xi|^2}.
\end{align*}
The estimate below reveals
the precise balance
between the dispersive decay
generated by the oscillatory phase
and the polynomial growth
caused by the amplitude singularity.

\begin{lemma}
	\label{Lemma-Small}
	
	Suppose that
	$f\in L^p$
	and
	$\ell\in\{0,1,2\}$.
	Then,
	the Fourier multiplier satisfies
	the following $L^p-L^q$ estimate:
	\begin{align*}
		\left\|
		\mathcal F^{-1}_{\xi\to x}
		\left(
		|\xi|^\ell\,
		\frac{\sin(\lambda_{\mathrm I,2}t)}
		{\lambda_{\mathrm I,2}}
		\right)
		\ast_{(x)}
		f(\cdot)
		\right\|_{L^q_{\chi}}
		\lesssim
		(1+t)^{
			1-\frac{\ell}{2}
			-\frac12\left(\frac1p-\frac1q\right)
		}
		\|f\|_{L^p}
	\end{align*}
	for
	$1\leqslant p\leqslant2\leqslant q\leqslant+\infty$.
\end{lemma}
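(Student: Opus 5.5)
The plan is to prove the two endpoint estimates, $(p,q)=(2,2)$ and $(p,q)=(1,+\infty)$, and then obtain the full range $1\leqslant p\leqslant2\leqslant q\leqslant+\infty$ by interpolation. The case $(1,2)$ will also be checked directly, since it is the one used later.

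Write the localized kernel as
\[
m_\ell(t,\xi):=\chi(\xi)\,|\xi|^\ell\,\frac{\sin(\lambda_{\mathrm I,2}t)}{\lambda_{\mathrm I,2}}.
\]
Since $\lambda_{\mathrm I,2}=c_{\mathrm D}|\xi|^2(1+O(|\xi|^4))$ on $\mathcal Z_{\mathrm{int}}(\varepsilon_0)$, the elementary bound $|\sin(\lambda t)/\lambda|\leqslant\min\{t,\lambda^{-1}\}$ gives
\[
|m_\ell(t,\xi)|\lesssim|\xi|^\ell\min\{t,|\xi|^{-2}\}.
\]

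\textbf{The $(2,2)$ estimate.} By Plancherel it suffices to bound $\sup_\xi|m_\ell(t,\xi)|$. The function $|\xi|^\ell\min\{t,|\xi|^{-2}\}$ attains its maximum at $|\xi|\approx t^{-1/2}$, where it is of size $t^{1-\ell/2}$; this holds for $\ell\in\{0,1,2\}$. For $t\leqslant1$ the bound is trivially $O(1)$. This yields $(1+t)^{1-\ell/2}$, as required.

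\textbf{The $(1,2)$ estimate, as a check.} Here we need $\|m_\ell(t,\cdot)\|_{L^2}$. Split at $|\xi|=t^{-1/2}$:
\[
\int_{|\xi|\leqslant t^{-1/2}}|\xi|^{2\ell}t^2\,\mathrm d\xi+\int_{|\xi|\geqslant t^{-1/2}}|\xi|^{2\ell-4}\,\mathrm d\xi\lesssim t^{\frac32-\ell}.
\]
The second integral converges at infinity within the cutoff region. Taking square roots gives $t^{\frac34-\frac\ell2}=t^{1-\frac\ell2-\frac14}$.

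\textbf{The $(1,\infty)$ estimate (main obstacle).} We need the pointwise kernel bound
\[
\Big|\int_{\mathbb R}\mathrm e^{ix\xi}m_\ell(t,\xi)\,\mathrm d\xi\Big|\lesssim(1+t)^{\frac12-\frac\ell2}.
\]

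1. Split the frequency axis at $|\xi|=t^{-1/2}$ with a smooth cutoff.

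2. On the inner piece $|\xi|\lesssim t^{-1/2}$, bound the integrand in absolute value: $t\int_{|\xi|\lesssim t^{-1/2}}|\xi|^\ell\,\mathrm d\xi\approx t^{\frac12-\frac\ell2}$.

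3. On the outer piece, write $\sin(\lambda_{\mathrm I,2}t)=\frac{1}{2i}\big(\mathrm e^{i\lambda_{\mathrm I,2}t}-\mathrm e^{-i\lambda_{\mathrm I,2}t}\big)$. Each term has phase $\Psi(\xi)=x\xi\pm t\lambda_{\mathrm I,2}(\xi)$, with $|\Psi''|\geqslant c\,t$ uniformly in $x$, because $\lambda_{\mathrm I,2}''=2c_{\mathrm D}+O(|\xi|^4)$ for small $\varepsilon_0$.

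4. Decompose the outer region dyadically, $|\xi|\approx2^{-k}$ with $t^{-1/2}\lesssim2^{-k}\leqslant\varepsilon_0$. On each dyadic block the amplitude $b=|\xi|^\ell\lambda_{\mathrm I,2}^{-1}\times(\text{cutoffs})$ satisfies $\|b\|_{L^\infty}+\|b'\|_{L^1}\lesssim2^{-k(\ell-2)}$.

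5. Lemma \ref{Lemma-Van-der-Corput} then bounds each block by $t^{-1/2}2^{k(2-\ell)}$.

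6. Sum over blocks. For $\ell\in\{0,1\}$ the geometric sum is dominated by the block $2^{-k}\approx t^{-1/2}$, giving $t^{-\frac12}t^{\frac{2-\ell}{2}}=t^{\frac12-\frac\ell2}$. For $\ell=2$ the amplitude is not singular, so van der Corput can be applied to the whole outer region at once, giving $t^{-1/2}$.

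7. Dyadic summation for $\ell=2$ would produce a $\log t$ loss; this is the delicate point, and treating the region as a single piece avoids it. A complementary check is the absolute bound $\int_{|\xi|\geqslant t^{-1/2}}|\xi|^{\ell-2}\,\mathrm d\xi\lesssim t^{\frac{1-\ell}{2}}$ for $\ell=0$. For $t\leqslant1$ the integrand is bounded and compactly supported, so everything is $O(1)$.

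This gives $\|\mathcal F^{-1}(m_\ell)\|_{L^\infty}\lesssim(1+t)^{\frac12-\frac\ell2}$, hence the $L^1\to L^\infty$ bound by Young's inequality.

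\textbf{Interpolation.} Riesz–Thorin between the $(2,2)$ and $(1,\infty)$ bounds gives $(p,p')$ with exponent $1-\frac\ell2-\frac12\big(\frac1p-\frac1{p'}\big)$. The general case $q\geqslant p'$ follows by factoring through a frequency-localized Bernstein/Young step. Concretely, one writes $\chi=\chi\tilde\chi$ and combines the $L^1\to L^r$ kernel bounds (from $L^2$–$L^\infty$ interpolation of $\mathcal F^{-1}m_\ell$) with Young's inequality at $1+\frac1q=\frac1r+\frac1p$. The exponents add up to the stated $1-\frac\ell2-\frac12\big(\frac1p-\frac1q\big)$.
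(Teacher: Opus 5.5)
Your two endpoint estimates are correct and follow the paper's route. The paper rescales $\eta=\sqrt t\,\xi$, bounds $|\eta|\leqslant2$ trivially, and applies Lemma~\ref{Lemma-Van-der-Corput} once on $|\eta|\geqslant1$. That is your split at $|\xi|\approx t^{-1/2}$, and your dyadic sum reproduces it. Your separate handling of $\ell=2$ is right; in the paper's single application it is automatic, because the $|\eta|^{\ell-3}$ term of $b'_{\ell,t}$ carries the factor $\ell-2$.

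\textbf{The gap: $\ell=2$ off the line $q=p'$.} Your $(1,2)$ check fails for $\ell=2$. There $2\ell-4=0$, so $\int_{t^{-1/2}\leqslant|\xi|\leqslant\varepsilon_0}|\xi|^{2\ell-4}\,\mathrm d\xi\approx\varepsilon_0$, not $t^{-1/2}$. Indeed, write $\sin^2=\frac12\big(1-\cos(2\lambda_{\mathrm I,2}t)\big)$ and substitute $s=\lambda_{\mathrm I,2}(\xi)$ on each half-line. The Riemann--Lebesgue lemma then gives
\[
\|m_2(t,\cdot)\|_{L^2}^2\longrightarrow\frac12\int_{\mathbb R}\chi(\xi)^2\,\frac{|\xi|^4}{\lambda_{\mathrm I,2}^2}\,\mathrm d\xi>0 .
\]
Testing with an approximate identity shows that $\|m_2(t,\cdot)\|_{L^2}$ bounds the $L^1\to L^2$ operator norm from below. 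By duality it also bounds the $L^2\to L^\infty$ norm.

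Hence the claimed rate $(1+t)^{-1/4}$ at $(p,q)=(1,2)$ and $(2,\infty)$ is false for $\ell=2$. The multiplier is essentially $c_{\mathrm D}^{-1}\chi(\xi)\sin(c_{\mathrm D}|\xi|^2t)$, a frequency-localized Schr\"odinger-type propagator, and such operators obey $t^{-\frac12(\frac1p-\frac1q)}$ only on the line $q=p'$. Your Young step, which uses the kernel bound $\|\mathcal F^{-1}m_2\|_{L^2}\lesssim t^{-1/4}$, collapses with it. For $\ell=2$ only the diagonal $q=p'$ is provable, and your Riesz--Thorin step already gives it.

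The paper's proof interpolates only between $(1,\infty)$ and $(2,2)$, so it too reaches only $\frac1p+\frac1q=1$. It does not justify the off-diagonal range either, and $\ell=2$ is exactly the case it uses for $\widehat G_{\psi,1}$.

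\textbf{The cases $\ell\in\{0,1\}$.} Here all your bounds are correct, including $\|m_\ell(t,\cdot)\|_{L^2}\lesssim t^{\frac34-\frac\ell2}$, but the assembly is misdescribed. Young's inequality with a kernel in $L^r$, $2\leqslant r\leqslant\infty$, covers exactly the triangle $\frac1p-\frac1q\geqslant\frac12$, not ``$q\geqslant p'$''; for example, $(p,q)=(2,4)$ would need $r=\frac43$. Points such as $(p,q)=(\frac43,2)$ are not covered at all.

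The fix is one more interpolation step:
\begin{enumerate}
\item Take the four corner bounds $(1,\infty)$, $(2,2)$, $(1,2)$ and $(2,\infty)$; the last two both have constant $\|m_\ell(t,\cdot)\|_{L^2}$.
\item Apply Riesz--Thorin in $q$ at $p=1$ and at $p=2$.
\item Then apply it in $p$.
\end{enumerate}
Since the target exponent is affine in $(\frac1p,\frac1q)$, this fills the whole square. This $(1,2)$ input is precisely what the paper's one-line interpolation lacks.

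A minor point: $\lambda_{\mathrm I,2}=c_{\mathrm D}|\xi|^2\big(1+O(|\xi|^2)\big)$, not $1+O(|\xi|^4)$. Only $\lambda_{\mathrm I,2}''\to2c_{\mathrm D}$ is used, so this is harmless.
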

\begin{proof}
	Let us rewrite the Fourier multiplier as
	\begin{align*}
		\mathcal{K}_{1,\ell}(t,x)
		:=
		\chi_{\intt}(D)
		\mathcal{F}^{-1}_{\xi\to x}
		\left(
		|\xi|^{\ell}\,
		\frac{\sin(\lambda_{\mathrm{I},2}t)}
		{\lambda_{\mathrm{I},2}}
		\right)
		& =
		\int_{\mathbb{R}}
		\chi_{\intt}(\xi)\,
		\mathrm{e}^{ix\xi}\,
		|\xi|^{\ell}\,
		\frac{\sin(\lambda_{\mathrm{I},2}t)}
		{\lambda_{\mathrm{I},2}}
		\,\mathrm{d}\xi
		\\
		&=
		\int_{\mathbb{R}}
		\chi_{\intt}(\xi)\,
		\mathrm{e}^{ix\xi}\,
		|\xi|^{\ell}\,
		\frac{
			\sin\big(|\xi|^2g_0(\xi)t\big)
		}
		{
			|\xi|^2g_0(\xi)
		}
		\,\mathrm{d}\xi,
	\end{align*}
	where we denoted $\lambda_{\mathrm{I},2}
	=
	|\xi|^2g_0(\xi)$ with $g_0(0)=c_{\mathrm{D}}>0$.
	
	We first prove the limit case $(p,q)=(1,+\infty)$.
	Motivated by the quadratic oscillatory structure appearing in the low-frequency zone, we introduce the scaling variable $\eta=\sqrt{t}\,\xi$,
	which leads to
	\begin{align*}
		\mathcal{K}_{1,\ell}(t,x)
		=
		t^{\frac12-\frac{\ell}{2}}
		\widetilde{\mathcal{K}}_{1,\ell,t}
		\left(
		\tfrac{x}{\sqrt{t}}
		\right)
	\end{align*}
	with
	\begin{align*}
		\widetilde{\mathcal{K}}_{1,\ell,t}(y):=
		\int_{\mathbb{R}}\chi_{\intt}
		\left(\tfrac{\eta}{\sqrt{t}}\right)
		\mathrm{e}^{iy\eta}\,
		|\eta|^{\ell}\,
		\frac{
			\sin\big(
			|\eta|^2g_0(\frac{\eta}{\sqrt{t}})
			\big)
		}
		{
			|\eta|^2g_0(\frac{\eta}{\sqrt{t}})
		}
		\,\mathrm{d}\eta.
	\end{align*}
	We now derive estimates for
	$\widetilde{\mathcal{K}}_{1,\ell,t}(y)$
	uniformly in $y$.
	We separately estimate
	the bounded region
	$|\eta|\leqslant2$
	and the oscillatory region
	$|\eta|\geqslant1$.
	For the region
	$|\eta|\leqslant2$,
	the boundedness of
	$\frac{\sin y}{y}$
	and
	$\mathrm{e}^{iy\eta}$
	yields
	\begin{align*}
		\left|
		\int_{|\eta|\leqslant2}
		\chi_{\intt}
		\left(
		\tfrac{\eta}{\sqrt{t}}
		\right)
		\mathrm{e}^{iy\eta}\,
		|\eta|^{\ell}\,
		\frac{
			\sin\big(
			|\eta|^2g_0(\frac{\eta}{\sqrt{t}})
			\big)
		}
		{
			|\eta|^2g_0(\frac{\eta}{\sqrt{t}})
		}
		\,\mathrm{d}\eta
		\right|
		\lesssim
		\int_{|\eta|\leqslant2}
		|\eta|^{\ell}
		\,\mathrm{d}\eta
		\lesssim1.
	\end{align*}
	For the oscillatory region
	$|\eta|\geqslant1$,
	we rewrite the integral via Euler's formula as
	\begin{align*}
		&
		\int_{|\eta|\geqslant1}
		\chi_{\intt}
		\left(
		\tfrac{\eta}{\sqrt{t}}
		\right)
		\mathrm{e}^{iy\eta}\,
		|\eta|^{\ell}\,
		\frac{
			\sin\big(
			|\eta|^2g_0(\frac{\eta}{\sqrt{t}})
			\big)
		}
		{
			|\eta|^2g_0(\frac{\eta}{\sqrt{t}})
		}
		\,\mathrm{d}\eta
		=
		\frac{1}{2i}
		\int_{|\eta|\geqslant1}
		\left(
		\mathrm{e}^{i\Phi_+(y,\eta)}
		-
		\mathrm{e}^{i\Phi_-(y,\eta)}
		\right)
		b_{\ell,t}(\eta)
		\,\mathrm{d}\eta,
	\end{align*}
	where we considered
	\begin{align*}
		\Phi_{\pm}(y,\eta)
		:=
		y\eta
		\pm
		|\eta|^2g_0
		\left(
		\tfrac{\eta}{\sqrt{t}}
		\right)\ \ \mbox{and}\ \ 
		b_{\ell,t}(\eta)
		:=
		\chi_{\intt}
		\left(
		\tfrac{\eta}{\sqrt{t}}
		\right)
		\frac{
			|\eta|^{\ell-2}
		}
		{
			g_0(\frac{\eta}{\sqrt{t}})
		}.
	\end{align*}
	A direct computation gives
	\begin{align*}
		\left|
		\partial_{\eta}^2
		\Phi_{\pm}(y,\eta)
		\right|
		&=
		\left|
		2g_0
		\left(
		\tfrac{\eta}{\sqrt{t}}
		\right)
		+
		\frac{4\eta}{\sqrt{t}}
		g_0'
		\left(
		\tfrac{\eta}{\sqrt{t}}
		\right)
		+
		\frac{|\eta|^2}{t}
		g_0''
		\left(
		\tfrac{\eta}{\sqrt{t}}
		\right)
		\right|
		\\
		&\geqslant
		2
		\left|
		g_0
		\left(
		\tfrac{\eta}{\sqrt{t}}
		\right)
		\right|
		-
		\frac{|\eta|}{\sqrt{t}}
		\left|
		4g_0'
		\left(
		\tfrac{\eta}{\sqrt{t}}
		\right)
		+
		\frac{|\eta|}{\sqrt{t}}
		g_0''
		\left(
		\tfrac{\eta}{\sqrt{t}}
		\right)
		\right|
		\\
		&\geqslant
		C_0,
	\end{align*}
	where we used
	the continuity of $g_0$,
	the identity
	$g_0(0)=c_{\mathrm{D}}>0$,
	which guarantees
	a non-degenerate quadratic oscillation
	in the low-frequency regime,
	and the small support property of
	$\chi_{\intt}$.
	Since
	$|\eta|\geqslant1$,
	we immediately have
	\begin{align*}
		\|b_{\ell,t}\|_{L^\infty(|\eta|\geqslant1)}
		\lesssim1
	\end{align*}
	for
	$\ell\in\{0,1,2\}$.
	Moreover, the first-order derivative is controlled by
	\begin{align*}
		|b'_{\ell,t}(\eta)|
		&=
		\left|
		\chi_{\intt}'
		\left(
		\tfrac{\eta}{\sqrt{t}}
		\right)
		\frac{
			|\eta|^{\ell-2}
		}
		{
			\sqrt{t}\,
			g_0(\frac{\eta}{\sqrt{t}})
		}
		+
		\chi_{\intt}
		\left(
		\tfrac{\eta}{\sqrt{t}}
		\right)
		\frac{
			(\ell-2)|\eta|^{\ell-3}
		}
		{
			g_0(\frac{\eta}{\sqrt{t}})
		}
		-
		\chi_{\intt}
		\left(
		\tfrac{\eta}{\sqrt{t}}
		\right)
		\frac{
			|\eta|^{\ell-2}\,
			g_0'(\frac{\eta}{\sqrt{t}})
		}
		{
			\sqrt{t}\,
			[g_0(\frac{\eta}{\sqrt{t}})]^2
		}
		\right|
		\\
		&\lesssim
		|\eta|^{\ell-3}
		+
		\frac{1}{\sqrt{t}}
		|\eta|^{\ell-2}.
	\end{align*}
	Therefore, it leads to
	\begin{align*}
		\|b'_{\ell,t}\|_{L^1(|\eta|\geqslant1)}
		&\lesssim
		\int_{1\leqslant|\eta|\leqslant\varepsilon_0\sqrt{t}}
		\left(
		|\eta|^{\ell-3}
		+
		\frac{1}{\sqrt{t}}
		|\eta|^{\ell-2}
		\right)
		\mathrm{d}\eta
		\lesssim1
	\end{align*}
	for
	$\ell\in\{0,1,2\}$
	and
	$t\geqslant1$.
	By Lemma \ref{Lemma-Van-der-Corput},
	we conclude that
	\begin{align*}
		\left|
		\int_{|\eta|\geqslant1}
		\chi_{\intt}
		\left(
		\tfrac{\eta}{\sqrt{t}}
		\right)
		\mathrm{e}^{iy\eta}\,
		|\eta|^{\ell}\,
		\frac{
			\sin\big(
			|\eta|^2g_0(\frac{\eta}{\sqrt{t}})
			\big)
		}
		{
			|\eta|^2g_0(\frac{\eta}{\sqrt{t}})
		}
		\,\mathrm{d}\eta
		\right|
		&
		\lesssim
		\sum_{\pm}
		\left|
		\int_{|\eta|\geqslant1}
		\mathrm{e}^{i\Phi_{\pm}(y,\eta)}\,
		b_{\ell,t}(\eta)
		\,\mathrm{d}\eta
		\right|
		\\
		&
		\lesssim
		\|b_{\ell,t}\|_{L^\infty(|\eta|\geqslant1)}
		+
		\|b'_{\ell,t}\|_{L^1(|\eta|\geqslant1)}
		\\
		&
		\lesssim1.
	\end{align*}
	Hence,
	\begin{align*}
		\|\widetilde{\mathcal{K}}_{1,\ell,t}\|_{L^\infty}
		\lesssim1
	\end{align*}
	for all
	$\ell\in\{0,1,2\}$
	and
	$t\geqslant1$,
	namely,
	\begin{align*}
		\|\mathcal{K}_{1,\ell}(t,\cdot)\|_{L^\infty}
		\lesssim
		t^{\frac12-\frac{\ell}{2}}.
	\end{align*}
	The case
	$0<t\leqslant1$
	follows directly from
	\begin{align*}
		\|\mathcal{K}_{1,\ell}(t,\cdot)\|_{L^\infty}
		\lesssim
		t
		\int_{\mathbb{R}}
		\chi_{\intt}(\xi)
		|\xi|^\ell
		\,\mathrm{d}\xi
		\lesssim1.
	\end{align*}
	Consequently, a summary of them addresses
	\begin{align*}
		\|\mathcal{K}_{1,\ell}(t,\cdot)\|_{L^\infty}
		\lesssim
		(1+t)^{\frac12-\frac{\ell}{2}}
	\end{align*}
	for all
	$\ell\in\{0,1,2\}$
	and
	$t>0$.
	By Young's inequality,
	\begin{align}\label{Est-01}
		\|\mathcal{K}_{1,\ell}(t,\cdot)
		\ast_{(x)}f(\cdot)\|_{L^\infty}
		\lesssim
		(1+t)^{\frac12-\frac{\ell}{2}}
		\|f\|_{L^1}.
	\end{align}
	
	We next consider the case
	$(p,q)=(2,2)$.
	Separating the regions
	$|\xi|\leqslant t^{-\frac{1}{2}}$
	and
	$t^{-\frac{1}{2}}\leqslant|\xi|\leqslant\varepsilon_0$,
	we are able to derive
	\begin{align*}
		\|\widehat{\mathcal{K}}_{1,\ell}(t,\xi)\|_{L^\infty}
		&\lesssim
		\left\|
		|\xi|^\ell\,
		\frac{\sin(\lambda_{\mathrm{I},2}t)}
		{\lambda_{\mathrm{I},2}}
		\right\|_{L^\infty}
		\lesssim
		(1+t)^{1-\frac{\ell}{2}},
	\end{align*}
	where we used
	$|\sin y|\leqslant|y|$
	in the first region and
	$|\sin y|\leqslant1$
	in the second one.
	By Plancherel's identity,
	\begin{align}\label{Est-06}
		\|\mathcal{K}_{1,\ell}(t,\cdot)
		\ast_{(x)}f(\cdot)\|_{L^2}
		&\lesssim
		\|\widehat{\mathcal{K}}_{1,\ell}(t,\xi)\|_{L^\infty}
		\|f\|_{L^2}
		\notag\\
		&\lesssim
		(1+t)^{1-\frac{\ell}{2}}
		\|f\|_{L^2}.
	\end{align}
	
	The estimates
	\eqref{Est-01}
	and
	\eqref{Est-06}
	correspond respectively
	to the dispersive endpoint
	$(p,q)=(1,+\infty)$
	and the energy endpoint
	$(p,q)=(2,2)$. Finally,
	the Riesz-Thorin interpolation theorem
	between
	\eqref{Est-01}
	and
	\eqref{Est-06}
	completes the proof.
\end{proof}
\begin{remark}
	The growth factor
	$(1+t)^{1-\frac{\ell}{2}}$
	originates from the amplitude singularity
	$|\xi|^{\ell-2}$,
	whereas the dispersive scaling
$(1+t)^{-\frac12\left(\frac1p-\frac1q\right)}$ is generated by the quadratic oscillation.
\end{remark}

By applying the same argument as in
Lemma \ref{Lemma-Small},
one obtains the following auxiliary result.
Compared with
Lemma \ref{Lemma-Small},
the proof is simpler since the multiplier
$\cos(\lambda_{\mathrm{I},2}t)$
does not contain the singular factor
$\lambda_{\mathrm{I},2}^{-1}$
at
$|\xi|=0$.

\begin{lemma}\label{Lemma-Small-2}
	Suppose that
	$f\in L^p$
	and
	$\ell\in\{0,1,2\}$.
	Then,
	the Fourier multiplier satisfies
	the following $L^p-L^q$ estimate:
	\begin{align*}
		\left\|
		\mathcal{F}^{-1}_{\xi\to x}
		\left(
		|\xi|^{\ell}\cos(\lambda_{\mathrm{I},2}t)
		\right)
		\ast_{(x)}f(\cdot)
		\right\|_{L^q_{\chi}}
		\lesssim
		(1+t)^{
			-\frac{\ell}{2}
			-\frac{1}{2}\left(\frac{1}{p}-\frac{1}{q}\right)
		}
		\|f\|_{L^p}
	\end{align*}
	for
	$1\leqslant p\leqslant2\leqslant q\leqslant+\infty$.
\end{lemma}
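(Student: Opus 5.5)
The plan is to repeat the structure of the proof of Lemma \ref{Lemma-Small}. We prove the two endpoint estimates, $(p,q)=(1,+\infty)$ and $(p,q)=(2,2)$, and then apply Riesz--Thorin interpolation. Write $\lambda_{\mathrm{I},2}=|\xi|^2g_0(\xi)$ with $g_0(0)=c_{\mathrm D}>0$, and set
\[
\mathcal{K}_{2,\ell}(t,x):=\int_{\mathbb R}\chi_{\intt}(\xi)\,\mathrm{e}^{ix\xi}|\xi|^\ell\cos\big(|\xi|^2g_0(\xi)t\big)\,\mathrm d\xi .
\]
For $0<t\leqslant1$ both endpoints are trivial, since the multiplier is bounded and compactly supported. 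We may therefore assume $t\geqslant1$.

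\emph{Dispersive endpoint.} The scaling $\eta=\sqrt t\,\xi$ gives
\[
\mathcal{K}_{2,\ell}(t,x)=t^{-\frac12-\frac{\ell}{2}}\widetilde{\mathcal K}_{2,\ell,t}\big(\tfrac{x}{\sqrt t}\big),
\qquad
\widetilde{\mathcal K}_{2,\ell,t}(y)=\int\chi_{\intt}\big(\tfrac{\eta}{\sqrt t}\big)\mathrm e^{iy\eta}|\eta|^\ell\cos\big(|\eta|^2g_0(\tfrac{\eta}{\sqrt t})\big)\,\mathrm d\eta .
\]
It remains to show $\|\widetilde{\mathcal K}_{2,\ell,t}\|_{L^\infty}\lesssim1$ uniformly in $t\geqslant1$. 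The region $|\eta|\leqslant2$ is bounded trivially. On $|\eta|\geqslant1$ we write the cosine via Euler's formula, which produces the same phases $\Phi_\pm$ as before, with $|\partial_\eta^2\Phi_\pm|\geqslant C_0$. The amplitude is now $b_{\ell,t}(\eta)=\chi_{\intt}(\eta/\sqrt t)|\eta|^\ell$.

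This is the main obstacle. Unlike the sine case, $b_{\ell,t}$ has no decaying factor $|\eta|^{-2}$, so a single application of Lemma \ref{Lemma-Van-der-Corput} only yields $\|b\|_{L^\infty}+\|b'\|_{L^1}\lesssim t^{\ell/2}$. That bound is off by exactly the factor we want to gain. I would handle it with a dyadic decomposition in $|\eta|\approx2^k$, $1\leqslant2^k\lesssim\varepsilon_0\sqrt t$. On each annulus two facts are available. First, $|\partial_\eta\Phi_\pm|=|y\pm2\eta g_0+\dots|\gtrsim2^k$ except on the set where $|y|\approx2^k$, so away from that set repeated integration by parts (non-stationary phase) gives arbitrary decay in $2^k$. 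Second, on the pieces near the stationary point we use the fact that the phase's second derivative there is not only bounded below by $C_0$. Localizing van der Corput to a window of width $\approx1$ around the stationary point, and using nonstationary phase outside it, each annulus contributes $O(2^{k\ell})$ only for the single $k$ with $2^k\approx|y|$.

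This bounds the kernel by $(1+|y|)^\ell$, not by a constant, which still fails to give a uniform bound for $\ell\geqslant1$. So an alternative is needed. Since $|\xi|^\ell$ with $\ell\in\{0,1,2\}$ can be absorbed as derivatives ($|\xi|^2=-\partial_x^2$; $|\xi|=\mathcal H\partial_x$), a first option is to trade $|\xi|^\ell$ for $\partial_x^\ell$ acting on $f$. But the $L^1$-norm of $f$ cannot absorb derivatives, so the route I would actually take is to follow the author's remark literally. The cosine case is reduced to the sine estimate through
\[
|\xi|^\ell\cos(\lambda_{\mathrm{I},2}t)=\partial_t\Big(|\xi|^\ell\frac{\sin(\lambda_{\mathrm{I},2}t)}{\lambda_{\mathrm{I},2}}\Big).
\]
We then differentiate the rescaled kernel $t^{\frac12-\frac{\ell}{2}}\widetilde{\mathcal K}_{1,\ell,t}(x/\sqrt t)$ in $t$ and run van der Corput on the resulting amplitudes. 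These amplitudes gain $t^{-1}$, and the amplitude is $|\eta|^\ell$ times smooth symbols whose derivative is of order $|\eta|^{\ell-1}$. Bounding their $L^1$-norm of the derivative is exactly where the argument is tight. If necessary, for $\ell=1,2$ I would instead insert the factor $|\eta|^{\ell}\lesssim$ and accept the bound only for $p=1$ data via the dyadic sum above.

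\emph{Energy endpoint.} For $(p,q)=(2,2)$, Plancherel gives the bound $\sup_{|\xi|\leqslant\varepsilon_0}|\xi|^\ell|\cos(\lambda_{\mathrm{I},2}t)|\lesssim1$. This is sufficient for $\ell=0$. For $\ell\geqslant1$ the claimed decay $t^{-\ell/2}$ in $L^2\to L^2$ cannot hold from a pure multiplier bound, so I expect the statement there to be used with $p<2$. In any case we interpolate whatever endpoints are established, exactly as in Lemma \ref{Lemma-Small}.
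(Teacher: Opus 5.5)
\textbf{There is a genuine gap, and it cannot be closed: for $\ell\in\{1,2\}$ the stated bound is false.} Your proposal does not establish the estimate, and the obstruction you hit is not a technicality. Neither the dyadic refinement nor the $\partial_t$ reduction can work.

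The paper gives no separate proof. It only asserts that the argument of Lemma \ref{Lemma-Small} carries over and is simpler. Carried out, that argument multiplies the scaling factor $t^{-\frac12-\frac{\ell}{2}}$ by the van der Corput bound for the amplitude $\chi_{\intt}(\eta/\sqrt t)|\eta|^{\ell}$. That amplitude has sup and total variation $\approx t^{\frac{\ell}{2}}$, so the argument yields $t^{-\frac12}$, with no gain from $|\xi|^{\ell}$. In the sine case it is the factor $|\eta|^{\ell-2}$ that keeps the amplitude uniformly bounded.

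You correctly saw the failure at $(p,q)=(2,2)$. The $L^2\to L^2$ norm is $\sup_\xi\chi_{\intt}(\xi)|\xi|^{\ell}|\cos(\lambda_{\mathrm{I},2}t)|$. Since $\lambda_{\mathrm{I},2}$ is strictly increasing on $(0,\varepsilon_0)$, for large $t$ the cosine equals $\pm1$ somewhere in any fixed interval where $\chi_{\intt}\equiv1$, so this norm does not decay.

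Your fallback, that the lemma is only needed for $p<2$, fails too, because your bound $|\widetilde{\mathcal K}_{2,\ell,t}(y)|\lesssim(1+|y|)^{\ell}$ is sharp, not lossy.
\begin{itemize}
\item Fix $\xi_0>0$ with $\chi_{\intt}\equiv1$ near $\xi_0$, and put $x=-t\lambda_{\mathrm{I},2}'(\xi_0)$. The phases $x\xi\pm t\lambda_{\mathrm{I},2}$ then have nondegenerate stationary points at $\pm\xi_0$, with second derivative $\approx t$.
\item Stationary phase gives $\mathcal K_{2,\ell}(t,x)=C(\xi_0)\,\xi_0^{\ell}\,t^{-\frac12}\cos\theta_t+O(t^{-1})$. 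Here $C(\xi_0)>0$ and $\theta_t=t\big(\lambda_{\mathrm{I},2}(\xi_0)-\xi_0\lambda_{\mathrm{I},2}'(\xi_0)\big)+\frac{\pi}{4}$.
\item Since $|\partial_{\xi_0}\theta_t|\approx t$, you can choose $\xi_0$ in a fixed compact interval with $|\cos\theta_t|\geqslant\frac12$. Hence $\|\mathcal K_{2,\ell}(t,\cdot)\|_{L^\infty}\approx t^{-\frac12}$, not $t^{-\frac12-\frac{\ell}{2}}$.
\end{itemize}

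The $\partial_t$ route cannot beat this. An $L^\infty$ bound on the sine kernel gives none on its $t$-derivative. Differentiating $t^{\frac12-\frac{\ell}{2}}\widetilde{\mathcal K}_{1,\ell,t}(x/\sqrt t)$ brings down a factor $y\eta$, with $|y|$ and $|\eta|$ up to $\sqrt t$.

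In fact the claim fails for every admissible $(p,q)$ once $\ell\geqslant1$. Take a fixed $f$ with $\widehat f\in\mathcal C_c^\infty$ supported away from $0$ where $\chi_{\intt}\equiv1$, and let $u(t)$ be the function inside the norm.
\begin{itemize}
\item One has $\|u(t)\|_{L^2}\approx1$ and $\|u(t)\|_{L^{q'}}\lesssim t^{\frac1{q'}-\frac12}$.
\item Then $\|u\|_{L^2}^2\leqslant\|u\|_{L^q}\|u\|_{L^{q'}}$ gives $\|u(t)\|_{L^q}\gtrsim t^{\frac1q-\frac12}$.
\item This exceeds the claimed rate $t^{-\frac{\ell}{2}-\frac12(\frac1p-\frac1q)}$, because $\ell+\frac1p+\frac1q>1$.
\end{itemize}

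\textbf{The interpolation step is also wrong.} Even where both endpoints hold, Riesz--Thorin between $(1,+\infty)$ and $(2,2)$ only reaches the dual line $\frac1p+\frac1q=1$. Off that line the stated rate fails already for $\ell=0$. For example, the $L^1\to L^2$ norm is comparable to $\|\chi_{\intt}(\xi)\cos(\lambda_{\mathrm{I},2}t)\|_{L^2_\xi}$, which tends to a positive constant rather than being $O(t^{-\frac14})$.

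\textbf{What your method (and the paper's) actually proves.} For $\ell\in\{0,1,2\}$ and $\frac1p+\frac1q=1$, the operator is bounded by $(1+t)^{-\frac12(\frac1p-\frac1q)}\|f\|_{L^p}$.
\begin{itemize}
\item Van der Corput applied directly to $\chi_{\intt}(\xi)|\xi|^{\ell}\mathrm{e}^{i(x\xi\pm t\lambda_{\mathrm{I},2})}$ gives $t^{-\frac12}$.
\item Plancherel gives $1$.
\item Interpolating on the dual line gives the bound.
\end{itemize}
So the factor $|\xi|^{\ell}$ buys boundedness but no decay. The extra factor $(1+t)^{-\frac{\ell}{2}}$ is never used for the cosine terms in the proof of Theorem \ref{Theorem-Lq}. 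The productive move was to turn your $(2,2)$ observation into a counterexample and prove this corrected statement, not to try to force the factor $(1+t)^{-\frac{\ell}{2}}$.
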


\subsection{$L^p-L^q$ estimates of dominant Fourier multipliers for high-frequencies}

\hspace{5mm}
In contrast to the low-frequency regime,
the high-frequency multipliers
do not contain the strong singular structure
appearing in the low-frequency region,
since
$\lambda_{\mathrm I,2}\approx |\xi|$
for
$|\xi|\gg1$.
Consequently,
the polynomial growth disappears
in the high-frequency estimates.
We first derive
$L^p-L^q$ estimates
for the dominant oscillatory multipliers
associated with the sine terms.

\begin{lemma}\label{Lemma-Large}
	
	Suppose that
	$f\in H^{s,p}$
	with $s>
	\ell-1
	+
	\frac32\left(\frac1p-\frac1q\right)$ for $
	\ell\in\{0,1,2\}$.
	Then,
	the Fourier multiplier satisfies
	the following $L^p-L^q$ estimate:
	\begin{align*}
		\left\|
		\mathcal{F}^{-1}_{\xi\to x}
		\left(
		|\xi|^\ell\,
		\frac{\sin(\lambda_{\mathrm I,2}t)}
		{\lambda_{\mathrm I,2}}
		\right)
		\ast_{(x)}
		f(\cdot)
		\right\|_{L^q_{1-\chi}}
		\lesssim
		(1+t)^{
			-\frac12\left(\frac1p-\frac1q\right)
		}
		\|f\|_{H^{s,p}}
	\end{align*}
	for
	$1\leqslant p\leqslant2\leqslant q\leqslant+\infty$.
	The same estimate remains valid
	with
	$\lambda_{\mathrm I,2}$
	replaced by
	$\lambda_{\mathrm I,1}$.
	
\end{lemma}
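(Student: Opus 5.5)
We split the high-frequency part with the cut-offs $\chi_{\mathrm{bdd}}$ and $\chi_{\mathrm{ext}}$, since $1-\chi=\chi_{\mathrm{bdd}}+\chi_{\mathrm{ext}}$, and treat the two pieces separately. The decay $(1+t)^{-\frac12(\frac1p-\frac1q)}$ comes from dispersion, proved at the endpoints $(p,q)=(1,+\infty)$ and $(2,2)$ and then interpolated by Riesz--Thorin exactly as in Lemma \ref{Lemma-Small}. The loss of $\frac32(\frac1p-\frac1q)$ derivatives relative to the trivial order $\ell-1$ will be paid through a dyadic decomposition in the exterior zone. The $\epsilon$ margin in $s$ is used to pass from the Besov space $B^{s'}_{p,1}$ to $H^{s,p}$.

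\textbf{Bounded zone.} On $\mathcal{Z}_{\mathrm{bdd}}(\frac{\varepsilon_0}{2},2N_0)$ both roots $\lambda_{\mathrm I,1},\lambda_{\mathrm I,2}$ are smooth, since they are positive and $\lambda_{\mathrm I,1}^2-\lambda_{\mathrm I,2}^2>0$ there. The amplitude $|\xi|^\ell\lambda_{\mathrm I,2}^{-1}$ is therefore smooth and bounded, and no regularity is lost. For the $L^1$--$L^\infty$ bound we write $\sin=\frac{1}{2i}(\mathrm e^{i\cdot}-\mathrm e^{-i\cdot})$ and apply Lemma \ref{Lemma-Van-der-Corput} to the phase $x\xi\pm t\lambda_{\mathrm I,2}(\xi)$. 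This requires $|\partial_\xi^2\lambda_{\mathrm I,2}|\geqslant c>0$, which fails at isolated points. We therefore split the compact zone into finitely many subintervals. Where $\lambda''\neq0$ we use second-order van der Corput, which gives $t^{-1/2}$. Near degenerate points we use a third-order derivative bound, possibly slightly weaker, or a direct analyticity argument. I expect this is the technical obstacle; failing a cleaner route, I would note that a $t^{-1/3}$ bound on small pieces still dominates after combining with smallness of the measure. The $L^2$ bound is trivial by Plancherel.

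\textbf{Exterior zone.} Decompose $\chi_{\mathrm{ext}}(\xi)=\sum_{k\geqslant k_0}\phi(2^{-k}\xi)$ with a Littlewood--Paley partition. Using the expansions $\lambda_{\mathrm I,j}=c|\xi|+O(1)$, the phase on each block is nearly linear, so van der Corput in its usual form does not directly apply. We rescale $\xi=2^k\zeta$ and use the standard wave-type dispersive estimate. The second derivative $\partial_\xi^2\lambda_{\mathrm I,2}\approx|\xi|^{-3}$ (from the $O(|\xi|^{-1})$ correction) yields the kernel bound
\[
\|\mathcal F^{-1}(\phi(2^{-k}\cdot)|\xi|^{\ell}\lambda_{\mathrm I,2}^{-1}\mathrm e^{\pm it\lambda_{\mathrm I,2}})\|_{L^\infty}\lesssim 2^{k(\ell-1)}\,2^{k}\min\{1,(t2^{-k})^{-1/2}\}\lesssim 2^{k(\ell-1)+\frac32k}t^{-\frac12}.
\]
The $L^2$ bound is $2^{k(\ell-1)}$. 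Interpolating gives the block estimate $2^{k(\ell-1+\frac32(\frac1p-\frac1q))}(1+t)^{-\frac12(\frac1p-\frac1q)}\|\phi_k(D)f\|_{L^p}$. Summing over $k$ gives the $B^{\ell-1+\frac32(\frac1p-\frac1q)}_{p,1}$ norm, which is bounded by $\|f\|_{H^{s,p}}$ via the embedding $H^{s,p}\hookrightarrow B^{s-\epsilon}_{p,1}$, since $s$ exceeds the threshold. For $t\leqslant1$ the same sum, without the dispersive factor, gives a uniform bound.

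For $\lambda_{\mathrm I,1}$ the argument is identical: on the exterior zone it has the same form $c|\xi|+O(1)$ or $+O(|\xi|^{-1})$ with a nonvanishing curvature correction, and on the bounded zone it is smooth and bounded away from $0$. The main obstacle remains the verification of nondegenerate curvature, for $\lambda_{\mathrm I,2}$ in the bounded zone and for the correction terms at high frequency. This requires a direct computation from the explicit formulas, together with uniform control of the curvature across both cases $c_{\mathrm S}\neq c_{\mathrm R}$ and $c_{\mathrm S}=c_{\mathrm R}$.
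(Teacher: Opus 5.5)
\textbf{There is a genuine gap in the bounded zone, and a second one in the interpolation step.}

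On the exterior zone your argument is the paper's own: dyadic blocks, van der Corput with $\mu\approx t\,2^{-3k}$ coming from $|\lambda''_{\mathrm I,2}|\approx|\xi|^{-3}$, summation into $B^{\ell+\frac12}_{1,1}$, and the $\epsilon$-loss from $H^{s,p}\hookrightarrow B^{s-\epsilon}_{p,1}$. Your block-by-block interpolation is actually cleaner than the paper's interpolation of two endpoint bounds that carry different smoothness. The paper does not separate $\mathcal Z_{\mathrm{bdd}}$; it asserts $|\lambda''_{\mathrm I,2}|\approx 2^{-3j}$ on every block $j\geqslant j_0$.

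\textbf{The bounded-zone obstacle.} The obstacle you flag there is real, and your fallback does not remove it. Near a simple zero $\xi_*$ of $\lambda''_{\mathrm I,2}$ one has $|\lambda''_{\mathrm I,2}|\gtrsim|\xi-\xi_*|$. Excising $|\xi-\xi_*|\leqslant\delta$ costs $\delta$, and van der Corput on the rest gives $(t\delta)^{-1/2}$. The optimum $\delta=t^{-1/3}$ yields only $t^{-1/3}$. Nothing better is possible: at $x=-t\lambda'_{\mathrm I,2}(\xi_*)$ the phase $x\xi+t\lambda_{\mathrm I,2}(\xi)$ has an Airy-type cubic stationary point, so the localized kernel has $\sup_x$ of order $t^{-1/3}$. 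Smallness of the measure cannot compensate for this.

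The degeneracy does occur when $c_{\mathrm S}\neq c_{\mathrm R}$:
\begin{itemize}
\item Near the origin, $\lambda''_{\mathrm I,2}(0)=2c_{\mathrm D}>0$.
\item At high frequency, with $\underline{c}:=\min\{c_{\mathrm S},c_{\mathrm R}\}$, one computes $\lambda_{\mathrm I,2}^2=\underline{c}^{\,2}|\xi|^2-C+O(|\xi|^{-2})$ with $C=c_{\mathrm O}^2\underline{c}^{\,2}/|c_{\mathrm S}^2-c_{\mathrm R}^2|>0$.
\item Hence $\lambda''_{\mathrm I,2}=-\frac{C}{\underline{c}}|\xi|^{-3}+O(|\xi|^{-5})<0$ for large $|\xi|$.
\end{itemize}
So $\lambda_{\mathrm I,2}$ has an inflection point inside $\mathcal Z_{\mathrm{bdd}}(\varepsilon_0,N_0)$. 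The $(1,+\infty)$ endpoint with rate $t^{-\frac12}$ therefore cannot be proved by any method in the non-equal speed case, and the paper's blanket curvature claim hides exactly this hole.

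For $c_{\mathrm S}=c_{\mathrm R}$ there is no problem. There $\lambda_{\mathrm I,1},\lambda_{\mathrm I,2}=\frac12\big(\sqrt{4c_{\mathrm R}^2|\xi|^2+c_{\mathrm O}^2}\pm c_{\mathrm O}\big)$ satisfy $\lambda''\approx\langle\xi\rangle^{-3}$ on all of $|\xi|\geqslant\varepsilon_0$, and your argument closes without any splitting.

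\textbf{The interpolation range.} This second gap is also shared with the paper. Riesz--Thorin between $(1,+\infty)$ and $(2,2)$ only reaches exponents with $\frac1p+\frac1q=1$. Your block estimate for general $1\leqslant p\leqslant2\leqslant q\leqslant+\infty$ is therefore not justified, and off that line the claimed decay is false.

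A counterexample at $(p,q)=(1,2)$: take $f\in\mathcal S$ with $\widehat f$ supported in $\{|\xi|\geqslant\varepsilon_0\}$. Write $\sin^2=\frac12(1-\cos(2\,\cdot))$ and note that $\lambda_{\mathrm I,2}'\neq0$ for $\xi\neq0$. Riemann--Lebesgue then shows the $L^2$ norm of the output tends to $\frac{1}{\sqrt2}\,\big\||\xi|^{\ell}\lambda_{\mathrm I,2}^{-1}\widehat f\big\|_{L^2}>0$, instead of decaying like $(1+t)^{-\frac14}$. The same happens with $\lambda_{\mathrm I,1}$.

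What your route can actually deliver is the estimate on the line $q=p'$ only:
\begin{itemize}
\item with rate $t^{-\frac12(\frac1p-\frac1q)}$ when $c_{\mathrm S}=c_{\mathrm R}$, or for the exterior zone alone;
\item with at best a weaker rate for $\lambda_{\mathrm I,2}$ when $c_{\mathrm S}\neq c_{\mathrm R}$, e.g.\ $t^{-\frac13(\frac1p-\frac1q)}$ if the inflection point is simple.
\end{itemize}
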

\begin{proof}
	Let us rewrite the Fourier multiplier as
	\begin{align*}
		\mathcal{K}_{2,\ell}(t,x)
		&:=
		\big(1-\chi_{\intt}(D)\big)
		\mathcal{F}^{-1}_{\xi\to x}
		\left(
		|\xi|^{\ell}\,
		\frac{\sin(\lambda_{\mathrm{I},2}t)}
		{\lambda_{\mathrm{I},2}}
		\right)
		\\
		&\,=
		\frac{1}{2i}
		\sum_{j\geqslant j_0}
		\int_{\mathbb{R}}
		\chi_j(\xi)
		\,\mathrm{e}^{ix\xi}
		\left(
		\mathrm{e}^{i\lambda_{\mathrm{I},2}t}
		-
		\mathrm{e}^{-i\lambda_{\mathrm{I},2}t}
		\right)
		|\xi|^\ell\,
		\lambda_{\mathrm{I},2}^{-1}
		\,\mathrm d\xi,
	\end{align*}
	where we use the dyadic decomposition
	\[
	1-\chi_{\intt}(\xi)
	=
	\sum_{j\geqslant j_0}\chi_j(\xi)
	\ \ \mbox{with}\ \ 
	\chi_j(\xi):=\chi_0(2^{-j}\xi),
	\]
	and
	$|\xi|\approx 2^j$
	on
	$\operatorname{supp}\chi_j$.
	For each dyadic block,
	we consider
	\[
	\mathcal{K}_{2,\ell,j}^{\pm}(t,x)
	:=
	\int_{\mathbb{R}}
	\mathrm{e}^{i(x\xi\pm\lambda_{\mathrm{I},2}t)}\,
	a_j(\xi)
	\,\mathrm d\xi,
	\]
	where
	\[
	a_j(\xi)
	:=
	\chi_j(\xi)
	|\xi|^\ell\,
	\lambda_{\mathrm{I},2}^{-1}.
	\]
	For high-frequencies,
	the asymptotic expansion of
	$\lambda_{\mathrm{I},2}$
	yields $|\lambda_{\mathrm{I},2}''(\xi)|
	\approx
	|\xi|^{-3}
	\approx
	2^{-3j}$ on $\operatorname{supp}\chi_j$.
	Therefore,
	applying Lemma \ref{Lemma-Van-der-Corput}
	with
	$\mu\approx t\,2^{-3j}$,
	we obtain
	\begin{align*}
		\|\mathcal{K}_{2,\ell,j}^{\pm}(t,\cdot)\|_{L^\infty}
		&\lesssim
		t^{-\frac12}\,
		2^{\frac32j}
		\left(
		\|a_j\|_{L^\infty}
		+
		\|a_j'\|_{L^1}
		\right).
	\end{align*}
	Since
	$\lambda_{\mathrm{I},2}\approx |\xi|$
	and
	$|\xi|\approx 2^j$
	on
	$\operatorname{supp}\chi_j$,
	we have
	\[
	\|a_j\|_{L^\infty}
	+
	\|a_j'\|_{L^1}
	\lesssim
	2^{(\ell-1)j}.
	\]
	That is to say,
	\begin{align*}
		\|\mathcal{K}_{2,\ell,j}^{\pm}(t,\cdot)\|_{L^\infty}
		\lesssim
		t^{-\frac12}\,
		2^{(\ell+\frac12)j}.
	\end{align*}
	The dispersive decay
	$t^{-\frac{1}{2}}$
	originates from the oscillatory structure
	of the high-frequency phase,
	while the dyadic factor
	$2^{(\ell+\frac12)j}$
	reflects the regularity requirement
	generated by the multiplier amplitude.
	By Young's inequality
	and summing over
	$j$,
	we get
	\begin{align*}
		\|\mathcal{K}_{2,\ell}(t,\cdot)\ast_{(x)}f(\cdot)\|_{L^\infty}
		&\lesssim
		t^{-\frac12}
		\sum_{j\geqslant j_0}
		2^{(\ell+\frac12)j}
		\|\chi_j(D)f\|_{L^1}
		\\
		&\lesssim
		t^{-\frac12}
		\|f\|_{B^{\ell+\frac12}_{1,1}}.
	\end{align*}
	Using the Sobolev--Besov embedding 
$
	H^{s,1}
	\hookrightarrow
	B^{\ell+\frac12}_{1,1}
$
	for
	$s>\ell+\frac12$,
	we arrive at
	\begin{align}\label{Est-Large-01}
		\|\mathcal{K}_{2,\ell}(t,\cdot)\ast_{(x)}f(\cdot)\|_{L^\infty}
		\lesssim
		t^{-\frac12}
		\|f\|_{H^{s,1}}
	\end{align}
	for
	$s>\ell+\frac12$.
	
	On the other hand,
	by Plancherel's identity,
	\begin{align}
		\|\mathcal{K}_{2,\ell}(t,\cdot)\ast_{(x)}f(\cdot)\|_{L^2}
		&\lesssim
		\left\|\big(
	1-\chi_{\intt}(\xi)\big)
		|\xi|^\ell\,
		\lambda_{\mathrm{I},2}^{-1}
		\widehat f(\xi)
		\right\|_{L^2}
		\notag\\
		&\lesssim
		\|f\|_{H^{\ell-1,2}},\label{Est-Large-02}
	\end{align}
	because
	$\lambda_{\mathrm{I},2}\approx|\xi|$
	in the high-frequency region.
	
	The estimates
	\eqref{Est-Large-01}
	and
	\eqref{Est-Large-02}
	correspond respectively
	to the dispersive endpoint
	$(p,q)=(1,+\infty)$
	and the energy endpoint
	$(p,q)=(2,2)$.
		Interpolating between
	\eqref{Est-Large-01}
	and
	\eqref{Est-Large-02}
	by the Riesz--Thorin theorem
	yields
	\begin{align*}
		\|\mathcal{K}_{2,\ell}(t,\cdot)\ast_{(x)}f(\cdot)\|_{L^q}
		\lesssim
		t^{-\frac12\left(\frac1p-\frac1q\right)}
		\|f\|_{H^{s,p}},
	\end{align*}
	provided that
	$
	s>
	\ell-1
	+
	\frac32\left(\frac1p-\frac1q\right)
	$.
	Replacing
	$t^{-\frac12\left(\frac1p-\frac1q\right)}$
	by
	$(1+t)^{-\frac12\left(\frac1p-\frac1q\right)}$
	also covers the case
	$0<t\leqslant1$.
	This completes the proof.
\end{proof}

By the same argument as above,
one obtains the following auxiliary result
for the cosine multipliers.

\begin{lemma}\label{Lemma-Large-2}
	Suppose that
	$f\in H^{s,p}$
	with $s>
	\ell
	+
	\frac{3}{2}
	\left(
	\frac{1}{p}-\frac{1}{q}
	\right)$ for $\ell\in\{0,1,2\}$.
	Then,
	the Fourier multiplier satisfies
	the following $L^p-L^q$ estimate:
	\begin{align*}
		\left\|
		\mathcal{F}^{-1}_{\xi\to x}
		\left(
		|\xi|^{\ell}\cos(\lambda_{\mathrm{I},2}t)
		\right)
		\ast_{(x)}f(\cdot)
		\right\|_{L^q_{1-\chi}}
		\lesssim
		(1+t)^{
			-\frac{1}{2}\left(\frac{1}{p}-\frac{1}{q}\right)
		}
		\|f\|_{H^{s,p}}
	\end{align*}
	for
	$1\leqslant p\leqslant2\leqslant q\leqslant+\infty$.
	The same estimate remains valid
	with
	$\lambda_{\mathrm{I},2}$
	replaced by
	$\lambda_{\mathrm{I},1}$.
\end{lemma}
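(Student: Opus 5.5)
We follow the proof of Lemma \ref{Lemma-Large}: prove the two endpoint estimates $(p,q)=(1,+\infty)$ and $(p,q)=(2,2)$ for the high-frequency part, then interpolate. Set
\[
\mathcal{K}_{3,\ell}(t,x):=\big(1-\chi_{\intt}(D)\big)\mathcal{F}^{-1}_{\xi\to x}\big(|\xi|^\ell\cos(\lambda_{\mathrm I,2}t)\big),
\]
and write $\cos(\lambda_{\mathrm I,2}t)=\frac12(\mathrm{e}^{i\lambda_{\mathrm I,2}t}+\mathrm{e}^{-i\lambda_{\mathrm I,2}t})$. We decompose $1-\chi_{\intt}=\sum_{j\geqslant j_0}\chi_j$ dyadically, with $|\xi|\approx2^j$ on $\operatorname{supp}\chi_j$. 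This gives blocks $\int_{\mathbb R}\mathrm{e}^{i(x\xi\pm\lambda_{\mathrm I,2}t)}a_j(\xi)\,\mathrm d\xi$ with amplitude $a_j:=\chi_j|\xi|^\ell$. Compared with the sine case, the amplitude is one power of $|\xi|$ larger, since the factor $\lambda_{\mathrm I,2}^{-1}\approx|\xi|^{-1}$ is absent. This is exactly why the regularity threshold shifts from $\ell-1+\frac32(\frac1p-\frac1q)$ to $\ell+\frac32(\frac1p-\frac1q)$.

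\emph{Dispersive endpoint.} By the high-frequency expansion, $|\lambda_{\mathrm I,2}''(\xi)|\approx|\xi|^{-3}\approx2^{-3j}$ on $\operatorname{supp}\chi_j$. Lemma \ref{Lemma-Van-der-Corput} with $\mu\approx t2^{-3j}$ then gives a block bound of $t^{-1/2}2^{\frac32 j}(\|a_j\|_{L^\infty}+\|a_j'\|_{L^1})$. Since $\|a_j\|_{L^\infty}+\|a_j'\|_{L^1}\lesssim2^{\ell j}$, each block is bounded by $t^{-1/2}2^{(\ell+\frac32)j}$. Young's inequality and summation over $j$ give
\[
\|\mathcal{K}_{3,\ell}(t,\cdot)\ast_{(x)}f\|_{L^\infty}\lesssim t^{-\frac12}\|f\|_{B^{\ell+\frac32}_{1,1}}\lesssim t^{-\frac12}\|f\|_{H^{s,1}}\quad\text{for } s>\ell+\tfrac32,
\]
using $H^{s,1}\hookrightarrow B^{\ell+\frac32}_{1,1}$.

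\emph{Energy endpoint.} Plancherel's identity and $|\cos|\leqslant1$ give $\|\mathcal{K}_{3,\ell}(t,\cdot)\ast_{(x)}f\|_{L^2}\lesssim\|f\|_{H^{\ell,2}}$.

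\emph{Interpolation.} Riesz--Thorin interpolation between the two endpoints yields the decay $t^{-\frac12(\frac1p-\frac1q)}$, with the regularity requirement $s>\ell+\frac32(\frac1p-\frac1q)$. For $0<t\leqslant1$ we use the factor $(1+t)$ in place of $t$.

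\emph{Replacing $\lambda_{\mathrm I,2}$ by $\lambda_{\mathrm I,1}$.} The argument goes through unchanged once we check that $|\lambda_{\mathrm I,1}''|\approx|\xi|^{-3}$ also holds at high frequency.

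\emph{Main obstacle.} The step needing care is this curvature lower bound $|\lambda_{\mathrm I,k}''|\gtrsim|\xi|^{-3}$. For $c_{\mathrm S}\neq c_{\mathrm R}$, we first write $\lambda_{\mathrm I,k}=c|\xi|+\frac{\kappa_k}{|\xi|}+O(|\xi|^{-3})$ as a smooth expansion in $|\xi|^{-1}$ (the roots are analytic there since the discriminant does not vanish), and then verify that $\kappa_k\neq0$. For $c_{\mathrm S}=c_{\mathrm R}$, the expansion has the form $c_{\mathrm S}|\xi|\pm\frac{c_{\mathrm O}}2+\frac{\kappa_\pm}{|\xi|}+\dots$, and again we must check $\kappa_\pm\neq0$ by computing the next coefficient. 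I would compute these coefficients directly from the quartic \eqref{Quartic-Eq}.

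If some coefficient were to vanish, the fallback is a weaker dispersive factor. Since $\cos$ is bounded, we can always use the trivial $L^\infty$ bound $\|a_j\|_{L^1}\lesssim2^{(\ell+1)j}$ for a block. Interpolating this with the van der Corput bound still yields the stated estimate after adjusting which endpoints are used.
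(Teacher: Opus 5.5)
Your route is the one the paper intends: its proof of this lemma only refers back to Lemma~\ref{Lemma-Large}. Your amplitude count $\|a_j\|_{L^\infty}+\|a_j'\|_{L^1}\lesssim 2^{\ell j}$ is also right. However, two steps fail, and both defects are already present in the paper's proof of Lemma~\ref{Lemma-Large}.

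\textbf{The curvature bound.} The blocks $\chi_j$, $j\geqslant j_0$, decompose $1-\chi_{\intt}$, so they also cover the bounded zone $\mathcal{Z}_{\mathrm{bdd}}(\varepsilon_0,N_0)$. No high-frequency expansion is available there, so checking $\kappa_k\neq0$ at infinity says nothing about it. For $c_{\mathrm S}\neq c_{\mathrm R}$ the bound is in fact false for $\lambda_{\mathrm I,2}$. Near the origin, $\lambda_{\mathrm I,2}''\to 2c_{\mathrm D}>0$ as $\xi\to0$. At infinity, the computation you propose gives
\[
\lambda_{\mathrm I,2}=c_{\min}|\xi|-\frac{c_{\mathrm O}^2\,c_{\min}}{2|c_{\mathrm R}^2-c_{\mathrm S}^2|}\,|\xi|^{-1}+O(|\xi|^{-3}),\qquad c_{\min}:=\min\{c_{\mathrm S},c_{\mathrm R}\},
\]
so $\lambda_{\mathrm I,2}''<0$ for large $|\xi|$. (Alternatively: a convex $\lambda_{\mathrm I,2}$ with $\lambda_{\mathrm I,2}-c_{\min}|\xi|\to0$ would satisfy $\lambda_{\mathrm I,2}\geqslant c_{\min}|\xi|$, which contradicts $\lambda_{\mathrm I,2}\approx c_{\mathrm D}|\xi|^2$ near $0$.) Hence $\lambda_{\mathrm I,2}$ has an inflection point $r_*\in(\varepsilon_0,N_0)$.

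Near $r_*$ only the third-derivative van der Corput bound, of order $t^{-1/3}$, is available. Stationary phase at $x=t\lambda_{\mathrm I,2}'(r_*)$ shows that for $\widehat f$ concentrated near $r_*$ the $L^\infty$ norm decays no faster than $t^{-1/3}$. So the $(1,+\infty)$ endpoint with $t^{-1/2}$ is false when $c_{\mathrm S}\neq c_{\mathrm R}$. Your fallback cannot repair this: the trivial bound carries no decay, and interpolation cannot create decay that the endpoints lack. For $c_{\mathrm S}=c_{\mathrm R}$ the step is fine. There $\lambda_{\mathrm I,1,2}=\frac12(\sqrt{4c_{\mathrm S}^2|\xi|^2+c_{\mathrm O}^2}\pm c_{\mathrm O})$, so $\lambda''=2c_{\mathrm S}^2c_{\mathrm O}^2(4c_{\mathrm S}^2|\xi|^2+c_{\mathrm O}^2)^{-3/2}\approx\langle\xi\rangle^{-3}$ on all of $|\xi|\geqslant\varepsilon_0$.

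\textbf{The interpolation.} Riesz--Thorin between $(1,+\infty)$ and $(2,2)$ only reaches pairs with $q=p'$. Off this line the stated rate is false even for equal speeds. Take $\ell=0$, $(p,q)=(1,2)$, and $f\in\mathcal S$ with $\widehat f\in\mathcal C_0^\infty(\{N_0\leqslant|\xi|\leqslant 2N_0\})$. Then $\|\cos(\lambda_{\mathrm I,2}(D)t)f\|_{L^2}^2-\frac12\|f\|_{L^2}^2$ is a fixed multiple of $\int\cos(2\lambda_{\mathrm I,2}t)|\widehat f|^2\,\mathrm d\xi$. This tends to $0$ by the Riemann--Lebesgue lemma, so the $L^2$ norm does not decay, whereas the lemma predicts decay like $(1+t)^{-1/4}$. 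Interpolating your two endpoints together with the non-decaying endpoints $(1,2)$ and $(2,+\infty)$ gives only the rate $(1+t)^{-\min\{\frac1p-\frac12,\,\frac12-\frac1q\}}$.

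So your argument establishes the lemma only on the line $q=p'$. For $\lambda_{\mathrm I,2}$ it does so only when $c_{\mathrm S}=c_{\mathrm R}$. For $\lambda_{\mathrm I,1}$ with $c_{\mathrm S}\neq c_{\mathrm R}$ you would still have to check that $\lambda_{\mathrm I,1}''$ does not vanish on the bounded zone. In the valid range, it is cleanest to interpolate the two bounds for each block $\chi_j(D)$ separately and then sum using $H^{s+\epsilon,p}\hookrightarrow B^{s}_{p,1}$. This also avoids interpolating $H^{s,1}$-spaces with different $s$.
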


\subsection{Proof of Theorem \ref{Theorem-Lq}}

\hspace{5mm}
According to the representations of solutions
in the Fourier space,
we first estimate the low-frequency part.
The dominant contributions
are controlled by
Lemma \ref{Lemma-Small},
whereas the remaining terms
are controlled by
Lemma \ref{Lemma-Small-2}.
Consequently,
for any
$1\leqslant p\leqslant2\leqslant q\leqslant+\infty$,
we obtain
\begin{align*}
	\|\varphi(t,\cdot)\|_{L^q_{\chi}}
	&\lesssim
	(1+t)^{
		1-\frac12\left(\frac1p-\frac1q\right)
	}
	\|(\varphi_0,\varphi_1)\|_{L^p\times L^p}
	+
	(1+t)^{
		\frac12-\frac12\left(\frac1p-\frac1q\right)
	}
	\|(\psi_0,\psi_1)\|_{L^p\times L^p},
	\\
	\|\psi(t,\cdot)\|_{L^q_{\chi}}
	&\lesssim
	(1+t)^{
		\frac12-\frac12\left(\frac1p-\frac1q\right)
	}
	\|(\varphi_0,\varphi_1)\|_{L^p\times L^p}
	+
	(1+t)^{
		-\frac12\left(\frac1p-\frac1q\right)
	}
	\|(\psi_0,\psi_1)\|_{L^p\times L^p}.
\end{align*}

We next estimate
the part away from
the low-frequency zone
$\mathcal{Z}_{\intt}(\varepsilon_0)$.
Applying
Lemma \ref{Lemma-Large}
and
Lemma \ref{Lemma-Large-2},
we derive the corresponding
high-frequency estimates with suitable regularities of the initial data.
In contrast to the low-frequency region,
the polynomial growth disappears
in the high-frequency analysis with the decay rate $-\frac{1}{2}\left(\frac{1}{p}-\frac{1}{q}\right)$,
since the oscillatory multipliers
no longer contain
the singular low-frequency structure.

Combining the low-frequency estimates
and high-frequency estimates,
we directly complete the proof
of Theorem \ref{Theorem-Lq}.

\begin{remark}
The required regularities depend on whether the wave speeds are equal, because the denominator $\lambda_{\mathrm{I},1}^2-\lambda_{\mathrm{I},2}^2$ has different high-frequency behavior described in \eqref{Eq-01}.
More precisely, the equal speed configuration produces an additional cancellation of the leading high-frequency terms, which improves the regularity requirements.
\end{remark}

\section{Optimal large time $L^2$-growth estimates of solutions}
\setcounter{equation}{0}
\label{Section-L2-est}

\subsection{Sharp $L^2$ asymptotics of Fourier multipliers}

\hspace{5mm}
We begin with several asymptotic estimates for the dominant Fourier multipliers
appearing in the representations of solutions.
The following lemma shows that the quadratic oscillation generated by
$\frac{\sin(c_{\mathrm D}|\xi|^2t)}
{c_{\mathrm D}|\xi|^2}$
provides the leading low-frequency asymptotic profile.

\begin{lemma}\label{Lemma-Approx}
	Suppose that $\ell\in\{0,1\}$.
	Then, the following asymptotic estimate:
	\begin{align*}
		&\left\|\chi_{\intt}(\xi)|\xi|^{\ell}
		\left(\frac{\sin(\lambda_{\mathrm{I},2}t)}{\lambda_{\mathrm{I},2}}-\frac{\sin(c_{\mathrm{D}}|\xi|^2t)}{c_{\mathrm{D}}|\xi|^2}\right)
		\right\|_{L^2}^2
		=
		o(t^{\frac{3}{2}-\ell})
	\end{align*}
	holds as
	$t\to+\infty$.
\end{lemma}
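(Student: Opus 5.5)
Writing $\lambda_{\mathrm{I},2}=|\xi|^2g_0(\xi)$ with $g_0(\xi)=c_{\mathrm D}+O(|\xi|^4)$ (from the low-frequency expansion $\lambda_{\mathrm I,2}=c_{\mathrm D}|\xi|^2+O(|\xi|^6)$), I would split the difference as
\begin{align*}
\frac{\sin(\lambda_{\mathrm{I},2}t)}{\lambda_{\mathrm{I},2}}-\frac{\sin(c_{\mathrm{D}}|\xi|^2t)}{c_{\mathrm{D}}|\xi|^2}
=\sin(\lambda_{\mathrm{I},2}t)\left(\frac{1}{\lambda_{\mathrm{I},2}}-\frac{1}{c_{\mathrm D}|\xi|^2}\right)
+\frac{\sin(\lambda_{\mathrm{I},2}t)-\sin(c_{\mathrm{D}}|\xi|^2t)}{c_{\mathrm{D}}|\xi|^2}=:J_1+J_2.
\end{align*}
For $J_1$, the amplitude difference satisfies $\left|\lambda_{\mathrm I,2}^{-1}-(c_{\mathrm D}|\xi|^2)^{-1}\right|=\frac{|\lambda_{\mathrm I,2}-c_{\mathrm D}|\xi|^2|}{\lambda_{\mathrm I,2}\,c_{\mathrm D}|\xi|^2}\lesssim |\xi|^2$ on $\mathcal Z_{\mathrm{int}}(\varepsilon_0)$. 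Hence $\chi_{\intt}|\xi|^\ell J_1$ is bounded uniformly in $t$, and its squared $L^2$ norm is $O(1)=o(t^{3/2-\ell})$ for $\ell\in\{0,1\}$.

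For $J_2$, I would split the frequency zone at a scale $|\xi|\leqslant \delta t^{-1/2}$ versus $\delta t^{-1/2}\leqslant|\xi|\leqslant\varepsilon_0$. More simply, I would use the mean value theorem: $|\sin(\lambda_{\mathrm I,2}t)-\sin(c_{\mathrm D}|\xi|^2t)|\leqslant t|\lambda_{\mathrm I,2}-c_{\mathrm D}|\xi|^2|\lesssim t|\xi|^6$, together with the trivial bound $\leqslant 2$. This gives $|\xi|^\ell|J_2|\lesssim \min\{t|\xi|^{4+\ell},|\xi|^{\ell-2}\}$. The crossover occurs at $|\xi|\approx t^{-1/6}$. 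Integrating the square over $|\xi|\leqslant t^{-1/6}$ gives $t^2\cdot t^{-(9+2\ell)/6}=t^{(3-2\ell)/6}$, and over $t^{-1/6}\leqslant|\xi|$ gives $\int_{t^{-1/6}}|\xi|^{2\ell-4}\,\mathrm d\xi\approx t^{(3-2\ell)/6}$. Thus $\|\chi_{\intt}|\xi|^\ell J_2\|_{L^2}^2\lesssim t^{\frac12-\frac{\ell}{3}}$, which is $o(t^{\frac32-\ell})$ since $\frac12-\frac\ell3<\frac32-\ell$ for $\ell\in\{0,1\}$ (namely $\frac12<\frac32$ and $\frac16<\frac12$).

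Combining the two pieces via $\|J_1+J_2\|^2\leqslant 2\|J_1\|^2+2\|J_2\|^2$ concludes the proof. The only delicate point is checking the expansion $\lambda_{\mathrm I,2}-c_{\mathrm D}|\xi|^2=O(|\xi|^6)$, or even merely $O(|\xi|^4)$, which would suffice. With an $O(|\xi|^4)$ error, the crossover scale is $t^{-1/4}$ and the bound becomes $t^{\frac34-\frac{\ell}{2}}$, which is still $o(t^{\frac32-\ell})$. This expansion follows from the explicit formula for $\lambda_{\mathrm{I},2}^2$, by Taylor expanding the square root, so no real obstacle arises; the main care is simply in tracking the power-counting in the crossover argument.
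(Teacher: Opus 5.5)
Your argument is correct, and for $J_2$ it takes a genuinely different route from the paper. The paper handles $J_1$ exactly as you do. For $J_2$, however, it explicitly dismisses the mean value bound $t|\lambda_{\mathrm I,2}-c_{\mathrm D}|\xi|^2|$ as producing an uncompensable factor $t$. It then rescales $\eta=\sqrt t\,\xi$ and writes the squared norm as $t^{\frac32-\ell}$ times an integral whose integrand tends to $0$ pointwise, since $g_0(\eta/\sqrt t)\to c_{\mathrm D}$. It concludes by dominated convergence with the majorant $\min\{|\eta|^{2\ell},|\eta|^{2\ell-4}\}$.

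You instead cap the mean value bound by $|\sin|\leqslant1$ and split at the scale where the phase mismatch $t|\lambda_{\mathrm I,2}-c_{\mathrm D}|\xi|^2|$ becomes of order one. This shows the pointwise estimate is perfectly usable once it is confined to that region. The paper's route is qualitative and needs only continuity of $g_0$ at the origin, so it is insensitive to the order of the remainder. Yours needs a quantitative remainder $O(|\xi|^{2+k})$ for some $k>0$, which gives the bound $t^{\frac{3-2\ell}{k+2}}$. In return it produces an explicit rate instead of a bare $o(t^{\frac32-\ell})$.

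One correction: the remainder $O(|\xi|^6)$ that you take from the paper's low-frequency expansion of the characteristic roots is not right. Insert $\lambda_{\mathrm I,2}^2=c_{\mathrm D}^2|\xi|^4+b|\xi|^6+O(|\xi|^8)$ into $\frac{I_\rho}{K}\lambda_{\mathrm I,2}^4-A(|\xi|)\lambda_{\mathrm I,2}^2+c_{\mathrm D}^2|\xi|^4=0$. Matching the $|\xi|^6$ terms forces $b=-c_{\mathrm D}^2(\frac{EI}{K}+\frac{I_\rho}{\rho})$, so $\lambda_{\mathrm I,2}=c_{\mathrm D}|\xi|^2-\frac{c_{\mathrm D}}{2}(\frac{EI}{K}+\frac{I_\rho}{\rho})|\xi|^4+O(|\xi|^6)$.

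Hence your fallback is the operative case. The crossover sits at $|\xi|\approx t^{-1/4}$, and $J_2$ contributes $O(t^{\frac34-\frac\ell2})$. In $J_1$ the amplitude difference is only $O(1)$ rather than $O(|\xi|^2)$; the paper's own $J_1$ step has the same harmless slip. This still gives $\|\chi_{\intt}|\xi|^\ell J_1\|_{L^2}^2=O(1)$. With this adjustment the proof is complete and yields the bound $O(1+t^{\frac{3-2\ell}{4}})$, which is the square root of the leading order $t^{\frac{3-2\ell}{2}}$ from Lemma~\ref{Lemma-Kernel}.
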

\begin{proof}
	By the low-frequency expansion of
	$\lambda_{\mathrm I,2}$,
	we have
	\begin{align*}
		\chi_{\intt}(\xi)
		\left|
		\frac{1}{\lambda_{\mathrm I,2}}
		-
		\frac{1}{c_{\mathrm{D}}|\xi|^2}
		\right|
		=
		\chi_{\intt}(\xi)
		\left|
		\frac{O(|\xi|^6)}
		{c_{\mathrm{D}}|\xi|^2\lambda_{\mathrm I,2}}
		\right|
		\lesssim
		\chi_{\intt}(\xi)|\xi|^2,
	\end{align*}
which suggests
	\begin{align}\label{Triangle-01}
		&
		\left\|
		\chi_{\intt}(\xi)
		|\xi|^{\ell}
		\left(
		\frac{\sin(\lambda_{\mathrm{I},2}t)}
		{\lambda_{\mathrm{I},2}}
		-
		\frac{\sin(\lambda_{\mathrm{I},2}t)}
		{c_{\mathrm{D}}|\xi|^2}
		\right)
		\right\|_{L^2}^2
		\lesssim
		\left\|
		\chi_{\intt}(\xi)
		|\xi|^{\ell+2}\,
		\right\|_{L^2}^2
		\lesssim1.
	\end{align}
	
	Next,
	we approximate
	$\sin(\lambda_{\mathrm{I},2}t)$
	by
	$\sin(c_{\mathrm{D}}|\xi|^2t)$
	in the $L^2$ framework,
	rather than relying on pointwise estimates
	in the Fourier space.
	Indeed,
	the error estimate in the Fourier space
	\begin{align*}
		\chi_{\intt}(\xi)
		\left|
		\sin(\lambda_{\mathrm{I},2}t)
		-
		\sin(c_{\mathrm{D}}|\xi|^2t)
		\right|
		\lesssim
		\chi_{\intt}(\xi)
		|\xi|^4t
	\end{align*}
	produces the additional growth factor
	$t$,
	which cannot be compensated
	in the absence of the dissipative structure.
	This is substantially different from
	the dissipative Timoshenko system \eqref{Eq-dissipative-Timoshenko} studied in
	\cite[Inequality (3.18)]{Chen=2026},
	in which the damping factor $\mathrm{e}^{-\frac{\gamma}{2\rho}|\xi|^2t}$
	provides the additional decay rate (but fails when $\gamma=0$) as follows:
	\begin{align*}
	\chi_{\intt}(\xi)|\xi|^4t \,\mathrm{e}^{-\frac{\gamma}{2\rho}|\xi|^2t}\lesssim t^{-1}\chi_{\intt}(\xi) \,\mathrm{e}^{-c|\xi|^2t}.
	\end{align*}
	In the vanishing dissipation case, we introduce the scaled variable
	$
	\eta=\sqrt{t}\,\xi
	$
	to obtain
	\begin{align*}
		&
		\left\|
		\chi_{\intt}(\xi)
		|\xi|^{\ell}
		\left(
		\frac{\sin(\lambda_{\mathrm{I},2}t)}
		{c_{\mathrm{D}}|\xi|^2}
		-
		\frac{\sin(c_{\mathrm{D}}|\xi|^2t)}
		{c_{\mathrm{D}}|\xi|^2}
		\right)
		\right\|_{L^2}^2
		\\
		&=
		t^{\frac32-\ell}
		\int_{\mathbb R}
		\chi_{\intt}
		\left(
		\tfrac{\eta}{\sqrt t}
		\right)
		|\eta|^{2(\ell-2)}
		\left[
		\sin
		\left(
		|\eta|^2
		g_0
		\left(
		\tfrac{\eta}{\sqrt t}
		\right)
		\right)
		-
		\sin(c_{\mathrm D}|\eta|^2)
		\right]^2
		\mathrm d\eta,
	\end{align*}
	where
$
	\lambda_{\mathrm I,2}
	=
	|\xi|^2g_0(\xi)
$ with
	$g_0(0)=c_{\mathrm D}>0$. 
	As
	$t\to+\infty$,
	we have
	\[
	g_0
	\left(
	\tfrac{\eta}{\sqrt t}
	\right)
	\to
	g_0(0)=c_{\mathrm D}
	\]
	for every fixed
	$\eta$.
	Hence, the next convergence:
	\begin{align*}
		&
		\chi_{\intt}
		\left(
		\tfrac{\eta}{\sqrt t}
		\right)
		|\eta|^{2(\ell-2)}
		\left[
		\sin
		\left(
		|\eta|^2
		g_0
		\left(
		\tfrac{\eta}{\sqrt t}
		\right)
		\right)
		-
		\sin(c_{\mathrm D}|\eta|^2)
		\right]^2
		\to0
	\end{align*}
holds pointwise as $t\to+\infty$.
	By the elementary bounds
	$
	|\sin y|
	\leqslant
	|y|
$ and $
	|\sin y|
	\leqslant1$, 
	we derive
	\begin{align*}
		&
		\chi_{\intt}
		\left(
		\tfrac{\eta}{\sqrt t}
		\right)
		\left|
		|\eta|^{2(\ell-2)}
		\left[
		\sin
		\left(
		|\eta|^2
		g_0
		\left(
		\tfrac{\eta}{\sqrt t}
		\right)
		\right)
		-
		\sin(c_{\mathrm D}|\eta|^2)
		\right]^2
		\right|
		\lesssim
		\min
		\left\{
		|\eta|^{2\ell},
		|\eta|^{2(\ell-2)}
		\right\}.
	\end{align*}
	Since
	\begin{align*}
		\int_{\mathbb R}
		\min
		\left\{
		|\eta|^{2\ell},
		|\eta|^{2(\ell-2)}
		\right\}
		\mathrm d\eta
		<+\infty
	\end{align*}
	for
	$\ell\in\{0,1\}$,
	the dominated convergence theorem yields
	\begin{align}\label{Triangle-02}
		&
		\left\|
		\chi_{\intt}(\xi)
		|\xi|^{\ell}
		\left(
		\frac{\sin(\lambda_{\mathrm{I},2}t)}
		{c_{\mathrm{D}}|\xi|^2}
		-
		\frac{\sin(c_{\mathrm{D}}|\xi|^2t)}
		{c_{\mathrm{D}}|\xi|^2}
		\right)
		\right\|_{L^2}^2
		=
		o(t^{\frac32-\ell})
	\end{align}
	as
	$t\to+\infty$.	Finally,
	since
	$1=o(t^{\frac32-\ell})$
	for
	$\ell\in\{0,1\}$,
	combining
	\eqref{Triangle-01}
	and
	\eqref{Triangle-02}
	completes the proof immediately.
\end{proof}

The next optimal estimate reveals
the precise low-frequency scaling
responsible for the growth rates
$t^{\frac{3}{4}}$
and
$t^{\frac{1}{4}}$.

\begin{lemma}
	\label{Lemma-Kernel}
	
	Suppose that
	$\ell\in\{0,1\}$.
	Then,
	the Fourier multiplier satisfies
	the following sharp scaling asymptotics:
	\begin{align*}
		\left\|
		\chi_{\intt}(\xi)
		|\xi|^{\ell}\,
		\frac{
			\sin(c_{\mathrm D}|\xi|^2t)
		}{
			c_{\mathrm D}|\xi|^2
		}
		\right\|_{L^2}^2
		\approx
		t^{\frac{3}{2}-\ell}
	\end{align*}
	as
	$t\to+\infty$.
	
\end{lemma}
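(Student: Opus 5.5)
The plan is to make the same parabolic rescaling $\eta=\sqrt{t}\,\xi$ that was used in the proofs of Lemma \ref{Lemma-Small} and Lemma \ref{Lemma-Approx}. Since $\mathrm{d}\xi=t^{-1/2}\mathrm{d}\eta$ and $|\xi|^{2\ell-4}=t^{2-\ell}|\eta|^{2\ell-4}$, we get
\begin{align*}
	\left\|
	\chi_{\intt}(\xi)|\xi|^{\ell}\frac{\sin(c_{\mathrm D}|\xi|^2t)}{c_{\mathrm D}|\xi|^2}
	\right\|_{L^2}^2
	=
	\frac{t^{\frac32-\ell}}{c_{\mathrm D}^2}
	\int_{\mathbb R}
	\chi_{\intt}^2\left(\tfrac{\eta}{\sqrt t}\right)
	|\eta|^{2(\ell-2)}\sin^2(c_{\mathrm D}|\eta|^2)
	\,\mathrm d\eta
	=:\frac{t^{\frac32-\ell}}{c_{\mathrm D}^2}\,J_\ell(t).
\end{align*}
So the statement is equivalent to showing that $J_\ell(t)$ is bounded above and below by positive constants, uniformly for $t\geqslant t_0$.

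For the upper bound, I use $0\leqslant\chi_{\intt}\leqslant1$ together with the elementary bounds $|\sin y|\leqslant\min\{|y|,1\}$. These give
\[
|\eta|^{2(\ell-2)}\sin^2(c_{\mathrm D}|\eta|^2)\lesssim\min\{|\eta|^{2\ell},|\eta|^{2(\ell-2)}\}.
\]
This is the same majorant as in Lemma \ref{Lemma-Approx}, and it is integrable over $\mathbb R$ for $\ell\in\{0,1\}$. For $\ell=1$, the tail behaves like $|\eta|^{-2}$; for $\ell=0$, it behaves like $|\eta|^{-4}$. Hence $J_\ell(t)\lesssim1$.

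For the lower bound, I assume, as is implicit in the definition of the cut-off, that $\chi_{\intt}\equiv1$ on a smaller ball $|\xi|\leqslant\varepsilon_0/2$. Then for $t\geqslant t_0:=4/\varepsilon_0^2$, the set $\{|\eta|\leqslant1\}$ lies where $\chi_{\intt}(\eta/\sqrt t)=1$. This gives
\[
J_\ell(t)\geqslant\int_{|\eta|\leqslant1}|\eta|^{2(\ell-2)}\sin^2(c_{\mathrm D}|\eta|^2)\,\mathrm d\eta=:c_\ell,
\]
and $c_\ell$ is independent of $t$. It is positive because the integrand is continuous, nonnegative and not identically zero. The integrand is also bounded near the origin, since $\sin^2(c_{\mathrm D}|\eta|^2)|\eta|^{2\ell-4}\sim c_{\mathrm D}^2|\eta|^{2\ell}$ there.

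Alternatively, the dominated convergence theorem applied with the majorant above shows that
\[
J_\ell(t)\to\int_{\mathbb R}|\eta|^{2(\ell-2)}\sin^2(c_{\mathrm D}|\eta|^2)\,\mathrm d\eta\in(0,+\infty),
\]
which is a sharp asymptotic constant and will be useful for the profile statements in Theorem \ref{Thm-01} and Theorem \ref{Thm-02}.

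I do not expect a real obstacle here. The only points needing care are the integrability of the majorant at infinity, which is exactly why the lemma is restricted to $\ell\in\{0,1\}$, and the requirement that $\chi_{\intt}$ equal $1$ near the origin, which is needed for the $t$-independent lower bound.
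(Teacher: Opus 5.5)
Your proof is correct. It uses the same mechanism as the paper: a threshold at $|\xi|\sim t^{-1/2}$, with $|\sin y|\leqslant|y|$ below it and $|\sin y|\leqslant1$ above it. The organization, however, is different.

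The paper stays in the variable $\xi$. For the upper bound it cuts $\{|\xi|\leqslant\varepsilon_0\}$ along the two time-dependent lines $|\xi|=t^{-1/2}$ and $|\xi|=t^{-\beta_\ell}$, where $\beta_\ell=\frac{3-2\ell}{4(2-\ell)}$. It estimates the three zones separately and tunes $\beta_\ell$ so that each contributes $t^{\frac32-\ell}$. For the lower bound it restricts to $|\xi|\leqslant\delta t^{-1/2}$, with $\delta$ small enough that $\frac{\sin y}{y}\geqslant\frac{1}{\sqrt2}$.

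You instead rescale $\eta=\sqrt t\,\xi$ exactly once. The time-dependent cut at $|\xi|=t^{-1/2}$ then becomes the fixed cut $|\eta|=1$, which is built into your $t$-independent majorant $\min\{|\eta|^{2\ell},|\eta|^{2(\ell-2)}\}$. This also shows that the paper's third zone is unnecessary: integrating $|\xi|^{2\ell-4}$ over all of $|\xi|\geqslant t^{-1/2}$ already gives $t^{\frac32-\ell}$.

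Your lower bound is the same localization to $|\xi|\lesssim t^{-1/2}$, expressed as positivity of a fixed integral. Your use of $\chi_{\intt}\equiv1$ on $|\xi|\leqslant\varepsilon_0/2$ is legitimate. It follows from the partition of unity $\chi_{\mathrm{bdd}}=1-\chi_{\intt}-\chi_{\mathrm{ext}}$, where $\chi_{\mathrm{bdd}}$ is supported in $|\xi|\geqslant\varepsilon_0/2$ and $\chi_{\mathrm{ext}}$ in $|\xi|\geqslant N_0$. The paper relies on this implicitly as well.

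Your version also gives more than the paper's. Dominated convergence shows that
$t^{-(\frac32-\ell)}\big\|\chi_{\intt}(\xi)|\xi|^{\ell}\frac{\sin(c_{\mathrm D}|\xi|^2t)}{c_{\mathrm D}|\xi|^2}\big\|_{L^2}^2$
converges to $c_{\mathrm D}^{-2}\int_{\mathbb R}|\eta|^{2(\ell-2)}\sin^2(c_{\mathrm D}|\eta|^2)\,\mathrm d\eta$. This exact constant equals $\|\mathcal{G}_{\ell}\|_{L^2}^2$ up to the Fourier normalization, which is consistent with the profiles in Theorems \ref{Thm-01} and \ref{Thm-02}. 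The paper's zone-by-zone argument yields only two-sided bounds with unspecified constants.
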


\begin{remark}
	The scaling $\eta=\sqrt{t}\,\xi$ 
	shows that the singular amplitude
	$|\xi|^{-2}$ combined with the quadratic oscillation
	$\sin(c_{\mathrm D}|\xi|^2t)$
	produces the growth factor
	$t^{\frac{3}{4}}$.
	Similarly,
	the additional factor
	$|\xi|$
	reduces the growth rate
	to
	$t^{\frac{1}{4}}$.
\end{remark}
\begin{proof}
	Concerning the upper bound, the approach in \cite[Lemma 4.1]{Chen=2026} is not applicable here
	due to the absence of the crucial exponentially decaying factor $\mathrm{e}^{-\frac{\gamma}{2\rho}|\xi|^2t}$,
	which produces an uncontrollable growth factor in time.
	To overcome this difficulty, we introduce two time-dependent separating lines
	$|\xi|=t^{-\alpha_{\ell}}$
	and
	$|\xi|=t^{-\beta_{\ell}}$,
	where
	\[
	\alpha_{\ell}:=\frac12\ \ \mbox{and}\ \ 
	\beta_{\ell}:=\frac{3-2\ell}{4(2-\ell)}\ \ \mbox{satisfying}\ \ \alpha_{\ell}>\beta_{\ell}>0.
	\]
	The first region captures the quadratic oscillatory scaling,
	whereas the third region is dominated by the singular weight, and the intermediate region balances these two mechanisms.
	Consequently, we implement the last philosophy to arrive at
	\begin{align*}
		&\left\|
		\chi_{\intt}(\xi)
		|\xi|^{\ell}\,
		\frac{
			\sin(c_{\mathrm D}|\xi|^2t)
		}{
			c_{\mathrm D}|\xi|^2
		}
		\right\|_{L^2}^2\\
		&\lesssim
		\left(
		\int_{|\xi|\leqslant t^{-\alpha_{\ell}}}
		+
		\int_{t^{-\alpha_{\ell}}\leqslant|\xi|\leqslant t^{-\beta_{\ell}}}
		+
		\int_{t^{-\beta_{\ell}}\leqslant|\xi|\leqslant\varepsilon_0}
		\right)
		\frac{
			|\sin(c_{\mathrm D}|\xi|^2t)|^2
		}{
			|\xi|^{4-2\ell}
		}
		\,\mathrm d\xi
		\\
		&\lesssim
		t^2
		\int_{|\xi|\leqslant t^{-\alpha_{\ell}}}
		\left|
		\frac{
			\sin(c_{\mathrm D}|\xi|^2t)
		}{
			c_{\mathrm D}|\xi|^2t
		}
		\right|^2
		|\xi|^{2\ell}
		\,\mathrm d\xi
		+
		\int_{t^{-\alpha_{\ell}}\leqslant|\xi|\leqslant t^{-\beta_{\ell}}}
		|\xi|^{-4+2\ell}
		\,\mathrm d\xi
		+
		t^{2\beta_{\ell}(2-\ell)}
		\int_{t^{-\beta_{\ell}}\leqslant|\xi|\leqslant\varepsilon_0}
		\mathrm d\xi
		\\
		&\lesssim
		t^2
		\int_0^{t^{-\alpha_{\ell}}}
		|\xi|^{2\ell}
		\,\mathrm d|\xi|
		+
		\int_{t^{-\alpha_{\ell}}}^{t^{-\beta_{\ell}}}
		|\xi|^{-4+2\ell}
		\,\mathrm d|\xi|
		+
		t^{2\beta_{\ell}(2-\ell)}
		\int_{t^{-\beta_{\ell}}}^{\varepsilon_0}
		\mathrm d|\xi|
		\\
		&\lesssim
		t^{2-\alpha_{\ell}(1+2\ell)}
		+
		t^{\alpha_{\ell}(3-2\ell)}
		+
		t^{2\beta_{\ell}(2-\ell)}.
	\end{align*}
	The choices of
	$\alpha_{\ell}$
	and
	$\beta_{\ell}$
	are made so that
	all three contributions
	have the same asymptotic order
	$t^{\frac32-\ell}$, in other words,
	\[
	t^{2-\alpha_{\ell}(1+2\ell)}+t^{\alpha_{\ell}(3-2\ell)}+t^{2\beta_{\ell}(2-\ell)}
	\approx t^{\frac32-\ell},
	\]
	which proves the desired upper bound.
	
	Turning to the lower bound, we shrink the integration region
	to the dominant low-frequency zone
	$\{|\xi|\leqslant\delta t^{-\frac12}\}$,
	where
	$\delta>0$
	is sufficiently small.
	Indeed,
	since
	$
	\frac{\sin y}{y}\to1
	$ as $y\to0$,
	choosing
	$\delta$
	sufficiently small,
	we guarantee
	\[
	\left|
	\frac{
		\sin(c_{\mathrm D}|\xi|^2t)
	}{
		c_{\mathrm D}|\xi|^2t
	}
	\right|
	\geqslant
	\frac{1}{\sqrt2}.
	\]
	It follows that
	\begin{align*}
		\left\|\chi_{\intt}(\xi)|\xi|^{\ell}\,\frac{\sin(c_{\mathrm D}|\xi|^2t)}{c_{\mathrm D}|\xi|^2}\right\|_{L^2}^2
		&\gtrsim t^2
		\int_{|\xi|\leqslant\delta t^{-\frac12}}\left|\frac{\sin(c_{\mathrm D}|\xi|^2t)}{c_{\mathrm D}|\xi|^2t}\right|^2|\xi|^{2\ell}\,\mathrm d\xi\\
		&\gtrsim t^2\int_0^{\delta t^{-\frac12}}|\xi|^{2\ell}\,\mathrm d|\xi|\\
		&\gtrsim t^{\frac32-\ell}.
	\end{align*}
	This completes the proof.
\end{proof}

\subsection{$L^2$ approximations of kernels}

\hspace{5mm}
Following the asymptotic approach in
\cite{Takeda=2026},
we compare the exact Fourier multipliers
\begin{align}\label{Notation-K-ell}
	\mathcal K_0^{\ell}(t,x)
	:=
	\chi_{\intt}(D)
	\mathcal F^{-1}_{\xi\to x}
	\left(
	(i\xi)^{\ell}\,
	\frac{\sin(c_{\mathrm{D}}|\xi|^2t)}
	{c_{\mathrm D}|\xi|^2}
	\right)
\end{align}
acting on the initial data (in the sense of convolution)
with the corresponding asymptotic kernels
multiplied by the moments
of the initial data.

\begin{lemma}\label{Lemma-Approx-2}
	Suppose that
	$f\in L^1$
	and
	$\ell\in\{0,1\}$.
	Then,
	the following approximation:
	\begin{align*}
		\left\|
		\mathcal K_0^{\ell}(t,\cdot)\ast_{(x)}f(\cdot)
		-
		\mathcal K_0^{\ell}(t,\cdot)P_f
		\right\|_{L^2}^2
		=
		o(t^{\frac{3}{2}-\ell})
	\end{align*}
	holds as
	$t\to+\infty$.
\end{lemma}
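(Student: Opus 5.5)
Passing to the Fourier side via Plancherel, the quantity to be estimated is
\begin{align*}
\int_{\mathbb R}\chi_{\intt}(\xi)^2\,|\xi|^{2\ell}\,\frac{\sin^2(c_{\mathrm D}|\xi|^2t)}{c_{\mathrm D}^2|\xi|^4}\,\big|\widehat f(\xi)-P_f\big|^2\,\mathrm d\xi ,
\end{align*}
since $\widehat{f}(0)=P_f$ (up to the normalization of the Fourier transform). The plan follows the standard Ikehata-type argument: for $f\in L^1$ we only know that $\widehat f$ is bounded and continuous, with $|\widehat f(\xi)-P_f|\leqslant \int_{\mathbb R}|\mathrm e^{-ix\xi}-1||f(x)|\,\mathrm dx$. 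This tends to $0$ as $\xi\to0$ by dominated convergence, and it is bounded by $2\|f\|_{L^1}$. No rate is available because $f\notin L^{1,1}$, so Lemma \ref{Lemma-Ikehata} cannot be used directly. A qualitative $o(\cdot)$ argument is therefore needed.

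First I rescale with $\eta=\sqrt t\,\xi$, exactly as in the proof of Lemma \ref{Lemma-Approx}. The integral becomes
\begin{align*}
t^{\frac32-\ell}\int_{\mathbb R}\chi_{\intt}\big(\tfrac{\eta}{\sqrt t}\big)^2\,|\eta|^{2\ell-4}\,\frac{\sin^2(c_{\mathrm D}|\eta|^2)}{c_{\mathrm D}^2}\,\big|\widehat f\big(\tfrac{\eta}{\sqrt t}\big)-P_f\big|^2\,\mathrm d\eta .
\end{align*}
It then suffices to show that the remaining $\eta$-integral tends to zero as $t\to+\infty$. For each fixed $\eta$ the integrand converges to $0$, because $\widehat f$ is continuous at the origin. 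The integrand is also dominated, uniformly in $t$, by
\begin{align*}
4\|f\|_{L^1}^2\,c_{\mathrm D}^{-2}\min\{c_{\mathrm D}^2|\eta|^{2\ell},\,|\eta|^{2\ell-4}\}.
\end{align*}
This follows from $|\sin y|\leqslant\min\{|y|,1\}$. The majorant is integrable on $\mathbb R$ for $\ell\in\{0,1\}$, which is the same integrability already used in Lemma \ref{Lemma-Approx}. The dominated convergence theorem then gives the claim $o(t^{\frac32-\ell})$.

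The only delicate point is the domination near $\eta=0$, where the amplitude $|\eta|^{2\ell-4}$ is singular. This is handled by the bound $\sin^2(c_{\mathrm D}|\eta|^2)\leqslant c_{\mathrm D}^2|\eta|^4$, which cancels the singularity. The integrability at infinity requires $2\ell-4<-1$, and this is exactly why the statement is restricted to $\ell\leqslant1$.

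An alternative, which avoids the rescaling, is to split the domain into $|\xi|\leqslant\delta$ and $\delta\leqslant|\xi|\leqslant\varepsilon_0$. On the first region one uses $\sup_{|\xi|\leqslant\delta}|\widehat f-P_f|$ together with the upper bound of Lemma \ref{Lemma-Kernel}, which gives a contribution $\lesssim\varepsilon(\delta)\,t^{\frac32-\ell}$ with $\varepsilon(\delta)\to0$ as $\delta\to0$. On the second region the integrand is bounded by $C_\delta\|f\|_{L^1}^2$, which is $O(1)$. Letting $t\to\infty$ and then $\delta\to0$ closes the argument. I would present the dominated convergence version, since it reuses the structure of Lemma \ref{Lemma-Approx}.
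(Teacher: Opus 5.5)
Your proof is correct, but it takes a different route from the paper's. The paper works in physical space. It writes the error as $\int_{\mathbb R}\big(\mathcal K_0^{\ell}(t,x-y)-\mathcal K_0^{\ell}(t,x)\big)f(y)\,\mathrm dy$ and splits at $|y|=t^{\frac18}$. The near part is bounded via the mean value theorem and $\|\partial_x\mathcal K_0^{\ell}(t,\cdot)\|_{L^2}$, using the upper bound of Lemma \ref{Lemma-Kernel} for $\ell=0$ and the boundedness of $\chi_{\intt}(\xi)\sin(c_{\mathrm D}|\xi|^2t)$ for $\ell=1$. This gives the explicit lower-order term $t^{\frac{3-2\ell}{4}}\|f\|_{L^1}^2$. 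The far part is bounded by $t^{\frac32-\ell}\|f\|_{L^1(|x|\geqslant t^{1/8})}^2$.

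You instead apply Plancherel, rescale $\eta=\sqrt t\,\xi$, and use dominated convergence. You need only the continuity of $\widehat f$ at the origin and the bound $|\widehat f-P_f|\leqslant 2\|f\|_{L^1}$, with the same majorant $\min\{|\eta|^{2\ell},|\eta|^{2\ell-4}\}$ as in Lemma \ref{Lemma-Approx}. The two arguments are dual. Bounding $|\mathrm e^{-iy\xi}-1|$ by $|y||\xi|$ for $|y|\leqslant t^{1/8}$, and by $2$ otherwise, inside your integral reproduces the paper's near/far splitting exactly.

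Your version is shorter and does not need Lemma \ref{Lemma-Kernel}. It also avoids a step in the paper: taking the $L^2_x$ norm of $\partial_x\mathcal K_0^{\ell}(t,x-\eta_0y)$, where the intermediate point $\eta_0=\eta_0(x,y)$ depends on $x$. Strictly, that step needs the integral form $\mathcal K_0^{\ell}(t,x-y)-\mathcal K_0^{\ell}(t,x)=-y\int_0^1\partial_x\mathcal K_0^{\ell}(t,x-\theta y)\,\mathrm d\theta$ together with Minkowski's inequality.

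The paper's version is more quantitative. Any decay rate for the $L^1$-tail of $f$ immediately upgrades $o(t^{\frac32-\ell})$ to an explicit rate. The same Taylor template is also reused for the first-moment Lemma \ref{Lemma-Approx-3}. Your method would handle that case equally well, via $\widehat f(\xi)=i\xi M_f+o(|\xi|)$ when $P_f=0$ and $f\in L^{1,1}$, with majorant $\min\{|\eta|^{2},|\eta|^{-2}\}$.
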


\begin{proof}
	Applying the mean value theorem
	to the spatial variable,
	we have
	\[
	|\mathcal K_0^{\ell}(t,x-y)
	-
	\mathcal K_0^{\ell}(t,x)|
	\lesssim
	|y|
	\,|\partial_x\mathcal K_0^{\ell}(t,x-\eta_0y)|
	\]
	for some
	$\eta_0\in(0,1)$. We separate it into two parts
	\begin{align*}
		&\left\|
		\mathcal K_0^{\ell}(t,\cdot)\ast_{(x)}f(\cdot)
		-
		\mathcal K_0^{\ell}(t,\cdot)P_f
		\right\|_{L^2}^2\\
		&\leqslant
		\left\|
		\int_{|y|\leqslant t^{\frac18}}
		\big(
		\mathcal K_0^{\ell}(t,\cdot-y)
		-
		\mathcal K_0^{\ell}(t,\cdot)
		\big)
		f(y)\,\mathrm dy
		\right\|_{L^2}^2
		+
		\left\|
		\int_{|y|\geqslant t^{\frac18}}
		\big(
		|\mathcal K_0^{\ell}(t,\cdot-y)|
		+
		|\mathcal K_0^{\ell}(t,\cdot)|
		\big)
		|f(y)|\,\mathrm dy
		\right\|_{L^2}^2.
	\end{align*}
	The time-dependent threshold
	$|y|=t^{\frac18}$
	is chosen so that
	the near-field contribution
	remains asymptotically negligible
	compared with
	the leading order
	$t^{\frac32-\ell}$.
	Using also Plancherel's identity,
	we arrive at
	\begin{align*}
		&\left\|
		\mathcal K_0^{\ell}(t,\cdot)\ast_{(x)}f(\cdot)
		-
		\mathcal K_0^{\ell}(t,\cdot)P_f
		\right\|_{L^2}^2\\
		&\lesssim
		t^{\frac14}
		\left\|
		\chi_{\intt}(\xi)
		(i\xi)^{\ell+1}\,
		\frac{\sin(c_{\mathrm{D}}|\xi|^2t)}
		{c_{\mathrm{D}}|\xi|^2}
		\right\|_{L^2}^2
		\|f\|_{L^1}^2
		+
		\left\|
		\chi_{\intt}(\xi)
		|\xi|^{\ell}\,
		\frac{\sin(c_{\mathrm{D}}|\xi|^2t)}
		{c_{\mathrm{D}}|\xi|^2}
		\right\|_{L^2}^2
		\|f\|_{L^1(|x|\geqslant t^{\frac18})}^2
		\\
		&\lesssim
		t^{\frac{3-2\ell}{4}}
		\|f\|_{L^1}^2
		+
		o(t^{\frac32-\ell}),
	\end{align*}
	as $t\to+\infty$, thanks to Lemma \ref{Lemma-Kernel}.
	Since our assumption $f\in L^1$,
	we have
	\[
	\lim_{t\to+\infty}
	\int_{|x|\geqslant t^{\frac18}}
	|f(x)|\,\mathrm dx
	=
	0.
	\]
	Moreover,
	$
	\frac{3-2\ell}{4}
	<
	\frac32-\ell
	$
	for all
	$\ell\in\{0,1\}$,
	so that the first term
	is lower-order.
	This completes the proof directly.
\end{proof}

\begin{remark}
	Actually, Lemma \ref{Lemma-Approx-2}
	shows that
	the leading large time behavior
	is completely determined
	by the zeroth moment
	$P_f$.
	This is the fundamental mechanism
	behind the asymptotic profiles
	derived later for
	$\varphi$
	and
	$\psi$.
\end{remark}

The next result shows that
when the zeroth moment vanishes,
the leading asymptotic behavior
is determined by the first moment.

\begin{lemma}\label{Lemma-Approx-3}
	Suppose that
	$f\in L^{1,1}$
	such that
	$P_f=0$.
	Then,
	the following approximation:
	\begin{align*}
		\left\|
		\mathcal K_0^{0}(t,\cdot)\ast_{(x)}f(\cdot)
		-
		\partial_x\mathcal K_0^0(t,\cdot)M_f
		\right\|_{L^2}^2
		=
		o(t^{\frac{1}{2}})
	\end{align*}
	holds as
	$t\to+\infty$.
\end{lemma}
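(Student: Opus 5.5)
Since $P_f=0$, I would write the difference as a Taylor remainder of the kernel in the spatial variable:
\begin{align*}
\mathcal K_0^0(t,\cdot)\ast_{(x)}f(\cdot)-\partial_x\mathcal K_0^0(t,\cdot)M_f
=\int_{\mathbb R}\Big(\mathcal K_0^0(t,x-y)-\mathcal K_0^0(t,x)+y\,\partial_x\mathcal K_0^0(t,x)\Big)f(y)\,\mathrm dy .
\end{align*}
This uses $\int f=0$ and $M_f=\int(-y)f(y)\,\mathrm dy$. I would then split the $y$-integration at a time-dependent threshold $|y|=t^{\kappa}$ with small $\kappa>0$ (for instance $\kappa=\frac18$, as in Lemma \ref{Lemma-Approx-2}). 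By Minkowski's inequality, each part is estimated in $L^2_x$ by integrating the $L^2_x$ norm of the integrand against $|f(y)|\,\mathrm dy$.

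In the near region $|y|\leqslant t^{\kappa}$, I would use the second-order Taylor formula $\mathcal K_0^0(t,x-y)-\mathcal K_0^0(t,x)+y\partial_x\mathcal K_0^0(t,x)=y^2\int_0^1(1-\theta)\partial_x^2\mathcal K_0^0(t,x-\theta y)\,\mathrm d\theta$. Its $L^2_x$ norm is bounded by $|y|^2\|\partial_x^2\mathcal K_0^0(t,\cdot)\|_{L^2}$. I would write $|y|^2\leqslant t^{\kappa}|y|$ so that only $\|f\|_{L^{1,1}}$ is needed. By Plancherel, $\|\partial_x^2\mathcal K_0^0\|_{L^2}^2=\|\chi_{\intt}(\xi)|\xi|^2\frac{\sin(c_{\mathrm D}|\xi|^2t)}{c_{\mathrm D}|\xi|^2}\|_{L^2}^2$. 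Here the symbol is bounded, and the scaling $\eta=\sqrt t\,\xi$ (the same computation as Lemma \ref{Lemma-Kernel} with $\ell=2$) gives $\lesssim t^{-\frac12}$; it is in any case $\lesssim1$. The near contribution squared is therefore at most $t^{2\kappa}\cdot t^{-\frac12}\|f\|_{L^{1,1}}^2$, which is $o(t^{\frac12})$ for small $\kappa$. Even the crude bound $t^{2\kappa}\|f\|_{L^{1,1}}^2$ suffices once $\kappa<\frac14$.

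In the far region $|y|\geqslant t^{\kappa}$, I would not Taylor expand but bound the remainder differently. By the first-order mean value theorem, $\|\mathcal K_0^0(t,\cdot-y)-\mathcal K_0^0(t,\cdot)\|_{L^2}\leqslant|y|\,\|\partial_x\mathcal K_0^0(t,\cdot)\|_{L^2}$. The term $y\partial_x\mathcal K_0^0$ contributes $|y|\,\|\partial_x\mathcal K_0^0(t,\cdot)\|_{L^2}$ as well. By Lemma \ref{Lemma-Kernel} with $\ell=1$, $\|\partial_x\mathcal K_0^0(t,\cdot)\|_{L^2}^2\approx t^{\frac12}$. The far contribution squared is thus $\lesssim t^{\frac12}\big(\int_{|y|\geqslant t^{\kappa}}|y||f(y)|\,\mathrm dy\big)^2=o(t^{\frac12})$, since $f\in L^{1,1}$ makes the tail vanish. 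Adding the two regions gives the claim.

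The main obstacle is making sure the near-field estimate really uses a second derivative of the kernel with a controlled $L^2$ norm; the first-order bound alone would give only $O(t^{\frac12})$, not $o(t^{\frac12})$. The second-order remainder combined with the tail split resolves this, and the rest mirrors Lemma \ref{Lemma-Approx-2}.
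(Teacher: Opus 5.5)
Your proposal is correct and follows the paper's proof essentially step for step: you use the same Taylor-remainder identity obtained from $P_f=0$ and a near/far split at $|y|=t^{\kappa}$ (the paper takes $t^{1/16}$ and bounds $|y|^2\leqslant t^{1/8}$ against $\|f\|_{L^1}$). As in the paper, the near field uses the second-order remainder with $\|\partial_x^2\mathcal K_0^0(t,\cdot)\|_{L^2}\lesssim1$, and the far field uses first-order bounds together with Lemma~\ref{Lemma-Kernel} for $\ell=1$; your integral-form remainder with Minkowski's inequality is in fact cleaner than the paper's pointwise mean value theorem, whose intermediate point depends on $x$.

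One side claim is wrong, though harmless. $\|\chi_{\intt}(\xi)\sin(c_{\mathrm D}|\xi|^2t)\|_{L^2}^2$ tends to $\frac12\|\chi_{\intt}\|_{L^2}^2$, not to $O(t^{-1/2})$. Lemma~\ref{Lemma-Kernel} does not extend to $\ell=2$, because after scaling the weight $|\eta|^{2\ell-4}$ is not integrable at infinity. So only your crude near-field bound $t^{2\kappa}\|f\|_{L^{1,1}}^2$ with $\kappa<\frac14$ is valid, and that bound is enough.
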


\begin{proof}
	Recalling the notation in
	\eqref{Notation-K-ell}
	and applying the mean value theorem,
	we have
	\[
	|\mathcal K_0^{0}(t,x-y)
	-
	\mathcal K_0^{0}(t,x)
	-(-y)
	\partial_x\mathcal K_0^0(t,x)|
	\lesssim
	|y|^2
	|\partial_x^2\mathcal K_0^{0}(t,x-\eta_1y)|
	\]
	for some $\eta_1\in(0,1)$.
	Therefore, using $\mathcal K_0^{0}(t,x)P_f=0$ from $P_f=0$, we estimate
	\begin{align*}
		&
		\left\|
		\mathcal K_0^{0}(t,\cdot)\ast_{(x)}f(\cdot)
		-
		\partial_x\mathcal K_0^0(t,\cdot)M_f
		\right\|_{L^2}^2
		\\
		&\leqslant
		\left\|
		\int_{|y|\leqslant t^{\frac{1}{16}}}
		\big(
		\mathcal K_0^{0}(t,\cdot-y)
		-
		\mathcal K_0^{0}(t,\cdot)
		-(-y)\partial_x\mathcal K_0^0(t,\cdot)
		\big)
		f(y)\,\mathrm dy
		\right\|_{L^2}^2
		\\
		&\quad
		+
		\left\|
		\int_{|y|\geqslant t^{\frac{1}{16}}}
		\big(
		|\mathcal K_0^{0}(t,\cdot-y)
		-
		\mathcal K_0^{0}(t,\cdot)|
		+
		|y\partial_x\mathcal K_0^{0}(t,\cdot)|
		\big)
		|f(y)|\,\mathrm dy
		\right\|_{L^2}^2.
	\end{align*}
	The time-dependent threshold
	$|y|=t^{\frac{1}{16}}$
	is chosen so that
	the near-field contribution
	remains asymptotically negligible
	compared with
	the leading order
	$t^{\frac12}$.
	Using also Plancherel's identity,
	we may deduce
	\begin{align*}
		&\left\|
		\mathcal K_0^{0}(t,\cdot)\ast_{(x)}f(\cdot)
		-
		\partial_x\mathcal K_0^0(t,\cdot)M_f
		\right\|_{L^2}^2\\
		&\lesssim
		t^{\frac14}
		\left\|
		\chi_{\intt}(\xi)
		\sin(c_{\mathrm{D}}|\xi|^2t)
		\right\|_{L^2}^2
		\|f\|_{L^1}^2
		+
		\left\|
		\chi_{\intt}(\xi)
		i\xi
		\frac{\sin(c_{\mathrm{D}}|\xi|^2t)}
		{c_{\mathrm{D}}|\xi|^2}
		\right\|_{L^2}^2
		\|xf\|_{L^1(|x|\geqslant t^{\frac{1}{16}})}^2
		\\
		&\lesssim
		t^{\frac{1}{4}}
		\|f\|_{L^1}^2
		+
		o(t^{\frac{1}{2}}),
	\end{align*}
	as $t\to+\infty$, thanks to Lemma \ref{Lemma-Kernel}.
	Here the singular factor $|\xi|^{-2}$ disappears after differentiation, which reduces the growth order.
	Since our assumption $f\in L^{1,1}$, we have
	\[
	\lim_{t\to+\infty} \int_{|x|\geqslant t^{\frac{1}{16}}} |xf(x)|\,\mathrm dx = 0.
	\]
	This completes the proof.
\end{proof}

The combination of Lemma \ref{Lemma-Approx-2} with $\ell=1$ and Lemma \ref{Lemma-Approx-3} gives the following final auxiliary lemma by applying the triangle inequality straightforwardly.

\begin{lemma}\label{Lemma-Approx-4}
	Suppose that
	$f_0\in L^{1,1}$
	and
	$f_1\in L^1$
	such that
	$P_{f_0}=0$.
	Then,
	the following approximation:
	\begin{align*}
		\left\|
		\mathcal K_0^0(t,\cdot)\ast_{(x)}f_0(\cdot)
		+
		\mathcal K_0^1(t,\cdot)\ast_{(x)}f_1(\cdot)
		-
		\mathcal K_0^1(t,\cdot)
		\big(M_{f_0}+P_{f_1}\big)
		\right\|_{L^2}^2
		=
		o(t^{\frac{1}{2}})
	\end{align*}
	holds as
	$t\to+\infty$.
\end{lemma}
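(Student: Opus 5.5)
The plan is to treat Lemma \ref{Lemma-Approx-4} as a direct consequence of Lemmas \ref{Lemma-Approx-2} and \ref{Lemma-Approx-3}, glued together with the triangle inequality. The first step is to identify $\partial_x\mathcal K_0^0$ with $\mathcal K_0^1$. By the definition \eqref{Notation-K-ell}, differentiation in $x$ corresponds to multiplication by $i\xi$ in the Fourier space, and this commutes with the cut-off $\chi_{\intt}(D)$. Hence
\begin{align*}
\partial_x\mathcal K_0^0(t,x)=\chi_{\intt}(D)\mathcal F^{-1}_{\xi\to x}\left(i\xi\,\frac{\sin(c_{\mathrm D}|\xi|^2t)}{c_{\mathrm D}|\xi|^2}\right)=\mathcal K_0^1(t,x),
\end{align*}
and the conclusion of Lemma \ref{Lemma-Approx-3} can be rewritten with $\mathcal K_0^1(t,\cdot)M_{f_0}$ in place of $\partial_x\mathcal K_0^0(t,\cdot)M_{f_0}$.

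Next, I split the error into two pieces:
\begin{align*}
&\mathcal K_0^0\ast_{(x)}f_0+\mathcal K_0^1\ast_{(x)}f_1-\mathcal K_0^1\big(M_{f_0}+P_{f_1}\big)\\
&\qquad=\big(\mathcal K_0^0\ast_{(x)}f_0-\mathcal K_0^1M_{f_0}\big)+\big(\mathcal K_0^1\ast_{(x)}f_1-\mathcal K_0^1P_{f_1}\big).
\end{align*}
The hypotheses $f_0\in L^{1,1}$ and $P_{f_0}=0$ are exactly those of Lemma \ref{Lemma-Approx-3}. That lemma therefore gives that the squared $L^2$ norm of the first bracket is $o(t^{\frac12})$.

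The hypothesis $f_1\in L^1$ lets me apply Lemma \ref{Lemma-Approx-2} with $\ell=1$. The rate there is $o(t^{\frac32-1})=o(t^{\frac12})$, so the squared $L^2$ norm of the second bracket is also $o(t^{\frac12})$. Finally I use $\|a+b\|_{L^2}^2\leqslant2\|a\|_{L^2}^2+2\|b\|_{L^2}^2$, and the sum of two $o(t^{\frac12})$ quantities is again $o(t^{\frac12})$, which is the claim.

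The only point needing any care is the identification $\partial_x\mathcal K_0^0=\mathcal K_0^1$. One has to check that both sides are genuine $L^2$ functions, so that the identity holds in $L^2$ and not merely in $\mathcal S'$. This holds because the multiplier $\chi_{\intt}(\xi)\,i\xi\sin(c_{\mathrm D}|\xi|^2t)/(c_{\mathrm D}|\xi|^2)$ is bounded by $t|\xi|$ near the origin and is compactly supported, hence it lies in $L^2$ (consistent with Lemma \ref{Lemma-Kernel}). Apart from this check, the argument is purely a matter of bookkeeping.
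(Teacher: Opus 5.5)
Your proposal is correct and matches the paper's argument. The paper also obtains this lemma by applying Lemma~\ref{Lemma-Approx-3} to $f_0$ and Lemma~\ref{Lemma-Approx-2} with $\ell=1$ to $f_1$, then using the triangle inequality; your explicit identification $\partial_x\mathcal K_0^0=\mathcal K_0^1$ and the bound $\|a+b\|_{L^2}^2\leqslant 2\|a\|_{L^2}^2+2\|b\|_{L^2}^2$ fill in bookkeeping the paper leaves implicit.
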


\subsection{Upper bound estimates for large time}

\hspace{5mm}
The leading large time behavior of solutions is generated by the low-frequency sine multipliers
associated with the initial velocity
$\varphi_1$.
In particular,
the low-frequency cosine multipliers
produce only lower-order contributions. 

We first extract
the dominant low-frequency contributions
from the representations of solutions.
Obviously, they fulfill the following pointwise estimates in the Fourier space:
\begin{align*}
	\chi_{\intt}(\xi)
	\left(
	|\widehat{K}_{\varphi,0}(t,|\xi|)|
	+
	|\widehat{K}_{\psi,0}(t,|\xi|)|
	\right)
	&\lesssim
	\chi_{\intt}(\xi),
	\\
	\chi_{\intt}(\xi)
	|\widehat{K}_{\psi,1}(t,|\xi|)|
	&\lesssim
	\chi_{\intt}(\xi)
	|\xi|
	\left|
	\frac{\sin(\lambda_{\mathrm I,2}t)}
	{\lambda_{\mathrm I,2}}
	\right|,
	\\
	\chi_{\intt}(\xi)
	|\widehat{K}_{\varphi,1}(t,|\xi|)|
	&\lesssim
	\chi_{\intt}(\xi)
	\left|
	\frac{\sin(\lambda_{\mathrm I,2}t)}
	{\lambda_{\mathrm I,2}}
	\right|.
\end{align*}
For the low-frequency analysis,
we occasionally employ
the $H^{-1}$ framework
instead of the $L^2$ framework
in order to match
the high-frequency regularity structure (to be explained later).
Indeed,
\begin{align*}
	\|\chi_{\intt}(\xi)\widehat{f}(\xi)\|_{L^2}^2
	&\lesssim
	\|\chi_{\intt}(\xi)\langle\xi\rangle\|_{L^{\infty}}^2
	\|\langle\xi\rangle^{-1}\widehat{f}(\xi)\|_{L^2}^2
	\\
	&\lesssim
	\|f\|_{H^{-1}}^2.
\end{align*}
Applying
Lemma \ref{Lemma-Ikehata}
and the Hausdorff--Young inequality,
we obtain
\begin{align*}
	\|\varphi(t,\cdot)\|_{L^2_{\chi}}^2
	&\lesssim
	\|\varphi_0\|_{L^2}^2
	+
	\left\|
	\chi_{\intt}(\xi)
	\left|
	\frac{\sin(\lambda_{\mathrm I,2}t)}
	{\lambda_{\mathrm I,2}}
	\right|
	\big(
	|P_{\varphi_1}|
	+
	|\xi|\,\|\varphi_1\|_{L^{1,1}}
	\big)
	\right\|_{L^2}^2
	\\
	&\quad
	+
	\|\psi_0\|_{H^{-1}}^2
	+
	\left\|
	\chi_{\intt}(\xi)
	|\xi|
	\left|
	\frac{\sin(\lambda_{\mathrm I,2}t)}
	{\lambda_{\mathrm I,2}}
	\right|
	\right\|_{L^2}^2
	\|\psi_1\|_{L^1}^2.
\end{align*}
By Lemma \ref{Lemma-Approx} and Lemma \ref{Lemma-Kernel}, we further deduce that
\begin{align*}
	\|\varphi(t,\cdot)\|_{L^2_{\chi}}^2
	&\lesssim
	\|\varphi_0\|_{L^2}^2
	+
	\left\|
	\chi_{\intt}(\xi)
	\left|
	\frac{\sin(c_{\mathrm{D}} |\xi|^2t)}
	{c_{\mathrm{D}} |\xi|^2}
	\right|
	\right\|_{L^2}^2
	|P_{\varphi_1}|^2
	+
	o(t^{\frac{3}{2}})
	|P_{\varphi_1}|^2
	+
	\|\psi_0\|_{H^{-1}}^2
	\\
	&\quad
	+
	\left\|
	\chi_{\intt}(\xi)
	|\xi|
	\left|
	\frac{\sin(c_{\mathrm{D}} |\xi|^2t)}
	{c_{\mathrm{D}} |\xi|^2}
	\right|
	\right\|_{L^2}^2
	\left(
	\|\varphi_1\|_{L^{1,1}}^2
	+
	\|\psi_1\|_{L^1}^2
	\right)
	+
	o(t^{\frac{1}{2}})
	\left(
	\|\varphi_1\|_{L^{1,1}}^2
	+
	\|\psi_1\|_{L^1}^2
	\right)
	\\
	&\lesssim
	\|\varphi_0\|_{L^2}^2
	+
	t^{\frac{1}{2}}
	\left(
	t\,|P_{\varphi_1}|^2
	+
	\|\varphi_1\|_{L^{1,1}}^2
	\right)
	+
	\|\psi_0\|_{H^{-1}}^2
	+
	t^{\frac{1}{2}}
	\|\psi_1\|_{L^1}^2
\end{align*}
as
$t\to+\infty$.
Furthermore,
a direct subtraction implies
\begin{align*}
	&
	\left\|
	\varphi(t,\cdot)
	-
	\mathcal{F}^{-1}_{\xi\to x}
	\left(
	\frac{\lambda_{\mathrm I,1}^2}
	{\lambda_{\mathrm I,1}^2-\lambda_{\mathrm I,2}^2}
	\,
	\frac{\sin(\lambda_{\mathrm I,2}t)}
	{\lambda_{\mathrm I,2}}
	\right)
	\ast_{(x)}
	\varphi_1(\cdot)
	\right\|_{L^2_{\chi}}^2
\lesssim
	\|(\varphi_0,\varphi_1)\|_{L^2\times L^2}^2
	+
	t^{\frac{1}{2}}
	\|(\psi_0,\psi_1)\|_{H^{-1}\times L^2}^2.
\end{align*}
Here we used that
\[
\frac{\lambda_{\mathrm I,1}^2}
{\lambda_{\mathrm I,1}^2-\lambda_{\mathrm I,2}^2}
=
1+O(|\xi|^2)
\]
for low-frequencies.
Motivated by
Lemma \ref{Lemma-Approx},
and applying
Lemma \ref{Lemma-Approx-2}
with
$\ell=0$,
we obtain
\begin{align*}
	&
	\left\|
	\mathcal{F}^{-1}_{\xi\to x}
	\left(
	\frac{\lambda_{\mathrm I,1}^2}
	{\lambda_{\mathrm I,1}^2-\lambda_{\mathrm I,2}^2}
	\,
	\frac{\sin(\lambda_{\mathrm I,2}t)}
	{\lambda_{\mathrm I,2}}
	\right)
	\ast_{(x)}
	\varphi_1(\cdot)
	-
	\mathcal{F}^{-1}_{\xi\to x}
	\left(
	\frac{\sin(c_{\mathrm D}|\xi|^2t)}
	{c_{\mathrm D}|\xi|^2}
	\right)
	P_{\varphi_1}
	\right\|_{L^2_{\chi}}^2
	=
	o(t^{\frac32}),
\end{align*}
which yields the asymptotic approximation
\begin{align}\label{Est-02}
	\left\|
	\varphi(t,\cdot)
	-
	\mathcal{F}^{-1}_{\xi\to x}
	\left(
	\frac{\sin(c_{\mathrm D}|\xi|^2t)}
	{c_{\mathrm D}|\xi|^2}
	\right)
	P_{\varphi_1}
	\right\|_{L^2_{\chi}}^2
	=
	o(t^{\frac32})
\end{align}
as
$t\to+\infty$.

Analogously,
thanks to
\begin{align*}
	\chi_{\intt}(\xi)
	\left(
	|\widehat{G}_{\varphi,0}(t,|\xi|)|
	+
	|\widehat{G}_{\psi,0}(t,|\xi|)|
	+
	|\widehat{G}_{\psi,1}(t,|\xi|)|
	\right)
	&\lesssim
	\chi_{\intt}(\xi),
	\\
	\chi_{\intt}(\xi)
	|\widehat{G}_{\varphi,1}(t,|\xi|)|
	&\lesssim
	\chi_{\intt}(\xi)
	|\xi|
	\left|
	\frac{\sin(\lambda_{\mathrm I,2}t)}
	{\lambda_{\mathrm I,2}}
	\right|,
\end{align*}
we conclude
\begin{align*}
	\|\psi(t,\cdot)\|_{L^2_{\chi}}^2
	\lesssim
	\|\varphi_0\|_{H^{-1}}^2
	+
	\left(
	t^{\frac{1}{2}}
	|P_{\varphi_1}|^2
	+
	\|\varphi_1\|_{L^{1,1}}^2
	\right)
	+
	\|\psi_0\|_{L^2}^2
	+
	\|\psi_1\|_{H^{-1}}^2
\end{align*}
as
$t\to+\infty$.
Moreover,
applying
Lemma \ref{Lemma-Approx-2}
with
$\ell=1$,
we derive
\begin{align}\label{Est-03}
	\left\|
	\psi(t,\cdot)
	-
	\mathcal{F}^{-1}_{\xi\to x}
	\left(
	i\xi
	\frac{\sin(c_{\mathrm D}|\xi|^2t)}
	{c_{\mathrm D}|\xi|^2}
	\right)
	P_{\varphi_1}
	\right\|_{L^2_{\chi}}^2
	=
	o(t^{\frac12})
\end{align}
as
$t\to+\infty$.

We next employ
the asymptotic expansions
of characteristic roots
for
$|\xi|\gg1$.
Here,
the high-frequency behavior depends on whether
the wave speeds are equal.
Moreover,
thanks to the separation property
\[
\lambda_{\mathrm{I},1}\neq\lambda_{\mathrm{I},2} \ \ \mbox{for} \ \ \xi\in\mathcal Z_{\bdd}(\varepsilon_0,N_0),
\]
the coefficients appearing
in the representations
of
$\widehat{\varphi}$
and
$\widehat{\psi}$
remain uniformly bounded
away from singularities.

\begin{itemize}
	
	\item
	If
	$c_{\mathrm S}\neq c_{\mathrm R}$,
	then
	\begin{align*}
		\big(1-\chi_{\intt}(\xi)\big)
		|\widehat{\varphi}|
		&\lesssim
		\big(1-\chi_{\intt}(\xi)\big)
		\left(
		|\widehat{\varphi}_0|
		+
		\langle\xi\rangle^{-1}
		|\widehat{\varphi}_1|
		+
		\langle\xi\rangle^{-1}
		|\widehat{\psi}_0|
		+
		\langle\xi\rangle^{-2}
		|\widehat{\psi}_1|
		\right),
		\\
		\big(1-\chi_{\intt}(\xi)\big)
		|\widehat{\psi}|
		&\lesssim
		\big(1-\chi_{\intt}(\xi)\big)
		\left(
		\langle\xi\rangle^{-1}
		|\widehat{\varphi}_0|
		+
		\langle\xi\rangle^{-2}
		|\widehat{\varphi}_1|
		+
		|\widehat{\psi}_0|
		+
		\langle\xi\rangle^{-1}
		|\widehat{\psi}_1|
		\right).
	\end{align*}
	
	Consequently, Plancherel's identity gives
	\begin{align*}
		\|\varphi(t,\cdot)\|_{L^2_{1-\chi}}^2
		&\lesssim
		\|(\varphi_0,\varphi_1)\|_{L^2\times H^{-1}}^2
		+
		\|(\psi_0,\psi_1)\|_{H^{-1}\times H^{-2}}^2,
		\\
		\|\psi(t,\cdot)\|_{L^2_{1-\chi}}^2
		&\lesssim
		\|(\varphi_0,\varphi_1)\|_{H^{-1}\times H^{-2}}^2
		+
		\|(\psi_0,\psi_1)\|_{L^2\times H^{-1}}^2.
	\end{align*}
	
	\item
	If
	$c_{\mathrm S}=c_{\mathrm R}$,
	then
	\begin{align*}
		\big(1-\chi_{\intt}(\xi)\big)
		|\widehat{\varphi}|
		+
		\big(1-\chi_{\intt}(\xi)\big)
		|\widehat{\psi}|\lesssim
		\big(1-\chi_{\intt}(\xi)\big)
		\left(
		|\widehat{\varphi}_0|
		+
		\langle\xi\rangle^{-1}
		|\widehat{\varphi}_1|
		+
		|\widehat{\psi}_0|
		+
		\langle\xi\rangle^{-1}
		|\widehat{\psi}_1|
		\right).
	\end{align*}
	
	Consequently, Plancherel's identity gives
	\begin{align*}
		\|\varphi(t,\cdot)\|_{L^2_{1-\chi}}^2
		+
		\|\psi(t,\cdot)\|_{L^2_{1-\chi}}^2
		\lesssim
		\|(\varphi_0,\varphi_1)\|_{L^2\times H^{-1}}^2
		+
		\|(\psi_0,\psi_1)\|_{L^2\times H^{-1}}^2.
	\end{align*}
	
\end{itemize}
In both cases,
no additional time growth
appears in the high-frequency zone.
Therefore,
the intrinsic large time growth mechanism
is generated entirely
by the low-frequency singular structure.
Moreover, the bounded property of sine functions suggests
\begin{align*}
	\left\|
	\mathcal{F}^{-1}_{\xi\to x}
	\left(
	(i\xi)^{\ell}\,
	\frac{\sin(c_{\mathrm D}|\xi|^2t)}
	{c_{\mathrm D}|\xi|^2}
	\right)
	P_{\varphi_1}
	\right\|_{L^2_{1-\chi}}^2
	&\lesssim
	\int_{|\xi|\geqslant\varepsilon_0}
	|\xi|^{-4+2\ell}
	\,\mathrm d\xi
	\,|P_{\varphi_1}|^2\\
	&\lesssim
	|P_{\varphi_1}|^2
\end{align*}
for all $\ell\in\{0,1\}$.
Hence,
the asymptotic profiles
obtained in
\eqref{Est-02}
and
\eqref{Est-03}
are generated exclusively
by the low-frequency region.

Applying Plancherel's identity
and combining all previous estimates,
we complete the proof
of the desired upper bound estimates.

\subsection{Lower bound estimates for large time}

\hspace{5mm}
For
$u\in\{\varphi,\psi\}$,
 Plancherel's identity implies
\[
\|u(t,\cdot)\|_{L^2}^2=\|\widehat{u}(t,\xi)\|_{L^2}^2\geqslant\|\chi_{\intt}(\xi)\widehat{u}(t,\xi)\|_{L^2}^2,
\]
which allows us to restrict the analysis to the low-frequency zone.

\subsubsection{Nontrivial zeroth moment
	$P_{\varphi_1}\neq0$}

\hspace{5mm}
Applying the triangle inequality, together with the estimate \eqref{Est-02} in the Fourier space and Lemma \ref{Lemma-Kernel}, we obtain
\begin{align*}
	\|\varphi(t,\cdot)\|_{L^2}^2
	&\gtrsim
	\left\|
	\chi_{\intt}(\xi)
	\frac{\sin(c_{\mathrm D}|\xi|^2t)}
	{c_{\mathrm D}|\xi|^2}
	P_{\varphi_1}
	\right\|_{L^2}^2
	-
	\left\|
	\chi_{\intt}(\xi)
	\left(
	\widehat{\varphi}(t,\xi)
	-
	\frac{\sin(c_{\mathrm D}|\xi|^2t)}
	{c_{\mathrm D}|\xi|^2}
	P_{\varphi_1}
	\right)
	\right\|_{L^2}^2
	\\
	&\gtrsim
	t^{\frac{3}{2}}
	|P_{\varphi_1}|^2
	-
	o(t^{\frac{3}{2}})
\end{align*}
as
$t\to+\infty$.

Similarly, combining the estimate \eqref{Est-03} in the Fourier space with Lemma \ref{Lemma-Kernel}, we derive
\begin{align*}
	\|\psi(t,\cdot)\|_{L^2}^2
	&\gtrsim
	\left\|
	\chi_{\intt}(\xi)
	i\xi
	\frac{\sin(c_{\mathrm D}|\xi|^2t)}
	{c_{\mathrm D}|\xi|^2}
	P_{\varphi_1}
	\right\|_{L^2}^2
	-
	\left\|
	\chi_{\intt}(\xi)
	\left(
	\widehat{\psi}(t,\xi)
	-
	i\xi
	\frac{\sin(c_{\mathrm D}|\xi|^2t)}
	{c_{\mathrm D}|\xi|^2}
	P_{\varphi_1}
	\right)
	\right\|_{L^2}^2
	\\
	&\gtrsim
	t^{\frac{1}{2}}
	|P_{\varphi_1}|^2
	-
	o(t^{\frac{1}{2}})
\end{align*}
as
$t\to+\infty$.

Therefore,
for sufficiently large time,
the leading profile terms dominate
the remainder contributions,
which completes the proof
of the lower bound estimates
under the condition
$P_{\varphi_1}\neq0$.

\subsubsection{Trivial zeroth moment
	$P_{\varphi_1}=0$}

\hspace{5mm}
Combining
\eqref{Rep-varphi}
with the low-frequency asymptotic expansions
of the characteristic roots,
we obtain immediately
\begin{align*}
	&
	\left\|
	\chi_{\intt}(\xi)
	\left(
	\widehat{\varphi}(t,\xi)
	-
	\frac{
		\lambda_{\mathrm{I},1}^2\,\widehat{\varphi}_1(\xi)
		-
		\frac{K}{\rho}i\xi\widehat{\psi}_1(\xi)
	}{
		\lambda_{\mathrm{I},1}^2
		-
		\lambda_{\mathrm{I},2}^2
	}
	\,
	\frac{
		\sin(\lambda_{\mathrm{I},2}t)
	}{
		\lambda_{\mathrm{I},2}
	}
	\right)
	\right\|_{L^2}^2
\lesssim
	\|(\varphi_0,\varphi_1)\|_{L^2\times H^{-1}}^2
	+
	\|(\psi_0,\psi_1)\|_{H^{-1}\times H^{-2}}^2.
\end{align*}
Consequently, the triangle inequality associated with the last estimate shows
\begin{align*}
	\|\varphi(t,\cdot)\|_{L^2_{\chi}}^2
	\gtrsim
	J_1(t)
	-
	\left(
	\|(\varphi_0,\varphi_1)\|_{L^2\times H^{-1}}^2
	+
	\|(\psi_0,\psi_1)\|_{H^{-1}\times H^{-2}}^2
	\right),
\end{align*}
in which we took the notion
\begin{align*}
	J_1(t):=
	\left\|\chi_{\intt}(\xi)\frac{\lambda_{\mathrm{I},1}^2\,\widehat{\varphi}_1(\xi)-\frac{K}{\rho}i\xi\widehat{\psi}_1(\xi)}{\lambda_{\mathrm{I},1}^2-\lambda_{\mathrm{I},2}^2}\,\frac{\sin(\lambda_{\mathrm{I},2}t)}{\lambda_{\mathrm{I},2}}\right\|_{L^2}^2.
\end{align*}

It remains to estimate lower bounds of $J_1(t)$ for large time.
Using the low-frequency asymptotic expansions of the characteristic roots, we may rewrite
\begin{align*}
	J_1(t)
	=
	\left\|
	\chi_{\intt}(\xi)
	\frac{
		\big(c_{\mathrm O}^2+O(|\xi|^2)\big)
		\widehat{\varphi}_1(\xi)
		-
		\frac{K}{\rho}i\xi\widehat{\psi}_1(\xi)
	}{
		c_{\mathrm O}^2+O(|\xi|^2)
	}
	\,
	\frac{
		\sin(\lambda_{\mathrm{I},2}t)
	}{
		\lambda_{\mathrm{I},2}
	}
	\right\|_{L^2}^2.
\end{align*}
Applying the triangle inequality again,
we further extract the following dominant contribution:
\begin{align*}
	J_1(t)
	&\gtrsim
	\left\|
	\chi_{\intt}(\xi)\left(\widehat{\varphi}_1(\xi)-\frac{K}{\rho\,c_{\mathrm O}^2}i\xi\widehat{\psi}_1(\xi)\right)
	\frac{\sin(c_{\mathrm D}|\xi|^2t)}{c_{\mathrm D}|\xi|^2}
	\right\|_{L^2}^2\\
	&\quad -
	\left\|\chi_{\intt}(\xi)|\xi|^2
	\left(|\widehat{\varphi}_1(\xi)|+|\xi|\,|\widehat{\psi}_1(\xi)|\right)
	\frac{\sin(\lambda_{\mathrm{I},2}t)}{\lambda_{\mathrm{I},2}}
	\right\|_{L^2}^2
	\\
	&\quad
	-
	\left\|
	\chi_{\intt}(\xi)
	\left(
	|\widehat{\varphi}_1(\xi)|
	+
	|\xi|\,
	|\widehat{\psi}_1(\xi)|
	\right)
	\left(
	\frac{
		\sin(\lambda_{\mathrm{I},2}t)
	}{
		\lambda_{\mathrm{I},2}
	}
	-
	\frac{
		\sin(c_{\mathrm D}|\xi|^2t)
	}{
		c_{\mathrm D}|\xi|^2
	}
	\right)
	\right\|_{L^2}^2,
\end{align*}
where we used
\begin{align*}
	&\chi_{\intt}(\xi)\left|\frac{\big(c_{\mathrm O}^2+O(|\xi|^2)\big)\widehat{\varphi}_1(\xi)-\frac{K}{\rho}i\xi\widehat{\psi}_1(\xi)}{c_{\mathrm O}^2+O(|\xi|^2)}-\left(\widehat{\varphi}_1(\xi)-\frac{K}{\rho\, c_{\mathrm O}^2}i\xi\widehat{\psi}_1(\xi)\right)\right|\\
&\lesssim\chi_{\intt}(\xi)|\xi|^2\left(|\widehat{\varphi}_1(\xi)|+|\xi|\,|\widehat{\psi}_1(\xi)|\right).
\end{align*}
Since $\frac{K}{\rho\,c_{\mathrm O}^2}=\frac{I_{\rho}}{\rho}$,
Lemma \ref{Lemma-Ikehata} with $P_{\varphi_1}=0$ and Lemma \ref{Lemma-Approx} with $\ell=1$ yield
\begin{align*}
	J_1(t)
	&\gtrsim
	\left\|
	\chi_{\intt}(\xi)
	\left(
	\widehat{\varphi}_1(\xi)
	-
	\frac{I_{\rho}}{\rho}
	i\xi\widehat{\psi}_1(\xi)
	\right)
	\frac{
		\sin(c_{\mathrm D}|\xi|^2t)
	}{
		c_{\mathrm D}|\xi|^2
	}
	\right\|_{L^2}^2
	-
	\left(
	\|\varphi_1\|_{L^{1,1}}^2
	+
	\|\psi_1\|_{L^1}^2
	\right)
	-
	o(t^{\frac{1}{2}}).
\end{align*}
Applying
Lemma \ref{Lemma-Approx-4}
with
$f_0=\varphi_1$
and
$f_1=\frac{I_{\rho}}{\rho}\psi_1$,
we further obtain
\begin{align*}
	J_1(t)
	&\gtrsim
	\left\|
	\mathcal F^{-1}_{\xi\to x}
	\left(
	i\xi
	\frac{
		\sin(c_{\mathrm D}|\xi|^2t)
	}{
		c_{\mathrm D}|\xi|^2
	}
	\right)
	\right\|_{L^2_{\chi}}^2
	\left|
	M_{\varphi_1}
	-
	\frac{I_{\rho}}{\rho}
	P_{\psi_1}
	\right|^2
	-
	o(t^{\frac{1}{2}})
	\\
	&\gtrsim
	t^{\frac{1}{2}}
	\left|
	M_{\varphi_1}
	-
	\frac{I_{\rho}}{\rho}
	P_{\psi_1}
	\right|^2
\end{align*}
as
$t\to+\infty$.
Therefore,
we complete the proof
of the lower bound estimate
under the condition
$
M_{\varphi_1}
-
\frac{I_{\rho}}{\rho}
P_{\psi_1}
\neq0.
$
Moreover,
the above derivation also yields
the corresponding large time asymptotic profile.
Since its proof follows exactly the same argument as in
\eqref{Est-02},
we omit the details.

\subsection{Proof of Corollary \ref{Coro-L-infty}}

\hspace{5mm}
As a consequence of the hyperbolic structure
of the classical Timoshenko system
\eqref{Eq-Timoshenko},
one may prove the finite propagation property
by the standard localized energy method,
analogously to the classical wave equation.

\begin{prop}\label{Prop-FPPS}
	Suppose that
	\begin{align*}
		\mathrm{supp}\,
		(\varphi_0,\varphi_1,\psi_0,\psi_1)
		\subset
		[-L,L]
		\ \ \mbox{for some}\ \ 
		L>0
	\end{align*}
	for the classical Timoshenko system
	\eqref{Eq-Timoshenko}.
	Then,
	the transversal displacement
	$\varphi$
	satisfies
	\begin{align*}
		\mathrm{supp}\,
		\varphi(t,\cdot)
		\subset
		[-L-c_*t,L+c_*t]
	\end{align*}
	for every
	$t\geqslant0$,
	where 
	$
	c_*
	:=
	\max\{
	c_{\mathrm S},
	c_{\mathrm R}
	\}$.
\end{prop}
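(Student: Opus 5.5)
We prove Proposition \ref{Prop-FPPS} by the standard localized energy method for symmetric hyperbolic systems, applied to the first-order formulation of \eqref{Eq-Timoshenko}. By density and the continuous dependence on data in the energy space (the energy $E(t)$ is conserved and the system is linear), we may assume smooth compactly supported data, so $(\varphi,\psi)$ is smooth. Fix $x_0\in\mathbb{R}$ and $T>0$ with $|x_0|>L+c_*T$. The plan is to show that $\varphi$ and $\psi$ vanish on the backward cone $\{(t,x): |x-x_0|\leqslant c_*(T-t),\ 0\leqslant t\leqslant T\}$. Since such $(T,x_0)$ are arbitrary, this gives $\mathrm{supp}\,\varphi(t,\cdot)\subset[-L-c_*t,L+c_*t]$, and also the same for $\psi$.

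The local energy on the cone slice $B_t:=[x_0-c_*(T-t),\,x_0+c_*(T-t)]$ is
\begin{align*}
	e(t):=\frac12\int_{B_t}\left(\rho\varphi_t^2+I_\rho\psi_t^2+EI\psi_x^2+K(\varphi_x-\psi)^2\right)\mathrm{d}x .
\end{align*}
We multiply the first equation by $\varphi_t$ and the second by $\psi_t$, then add. This gives the local conservation law
\begin{align*}
	\partial_t\mathcal{E}-\partial_x\big(K(\varphi_x-\psi)\varphi_t+EI\psi_x\psi_t\big)=0,
\end{align*}
where $\mathcal{E}$ is the integrand of $e(t)$. The key point is that the zeroth-order coupling $-K(\varphi_x-\psi)\psi_t$ combines with $K(\varphi_x-\psi)(\varphi_x-\psi)_t$. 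So the lower-order term is absorbed exactly, and no Gronwall step is needed.

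Differentiating $e(t)$ with Leibniz's rule, the moving endpoints contribute $-c_*\big(\mathcal{E}(x_+)+\mathcal{E}(x_-)\big)$, and the flux contributes boundary terms $\pm\big(K(\varphi_x-\psi)\varphi_t+EI\psi_x\psi_t\big)$. By Young's inequality,
\begin{align*}
	|K(\varphi_x-\psi)\varphi_t|\leqslant \tfrac{c_{\mathrm S}}{2}\big(\rho\varphi_t^2+K(\varphi_x-\psi)^2\big),
\end{align*}
using $K/\sqrt{\rho K}=c_{\mathrm S}$. Similarly,
\begin{align*}
	|EI\psi_x\psi_t|\leqslant \tfrac{c_{\mathrm R}}{2}\big(I_\rho\psi_t^2+EI\psi_x^2\big).
\end{align*}
Hence each flux is bounded by $c_*\mathcal{E}$ at the endpoint, and $e'(t)\leqslant0$. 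Since $B_0\cap[-L,L]=\emptyset$, we have $e(0)=0$, so $e(t)\equiv0$.

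Consequently $\varphi_t=\psi_t=0$ in the cone. Because $\psi$ and $\psi_0$ vanish on $B_0$, we get $\psi\equiv0$ there, and then $\varphi_x=\psi=0$ and $\varphi_t=0$ give $\varphi\equiv\varphi_0=0$ on the cone.

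The main (mild) obstacle is the choice of the single propagation speed $c_*=\max\{c_{\mathrm S},c_{\mathrm R}\}$. The Young-inequality weights must be matched to each wave component so that the boundary flux is dominated by the energy density. This works only because the energy is written with $K(\varphi_x-\psi)^2$ rather than $K\varphi_x^2$. Passing back from smooth to general data is routine via the conserved energy.
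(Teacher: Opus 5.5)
Your proof is correct and is the standard localized energy method that the paper invokes for Proposition \ref{Prop-FPPS} without giving details. Your flux identity and the Young-inequality bound $|K(\varphi_x-\psi)\varphi_t+EI\psi_x\psi_t|\leqslant c_*\mathcal{E}$ supply exactly the computation the paper leaves implicit. One point deserves more careful phrasing: for data outside the energy space, such as the classes $Z_1^0$ and $Z_2^0$ used in Corollary \ref{Coro-L-infty}, the approximation step should rely on the Fourier representation rather than on the conserved energy. There the multipliers are locally bounded and polynomially growing, so at fixed $t$ the solutions converge in $\mathcal{S}'$ when you take mollified data supported in $[-L-\delta,L+\delta]$ and let $\delta\to0$.
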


\noindent
According to Proposition \ref{Prop-FPPS},
the support of
$\varphi(t,\cdot)$
is contained in an interval
of length
$2(L+c_*t)$.
Therefore,
for large time
$t\gg1$, H\"older's inequality with $\frac{1}{p}+\frac{1}{q}=1$ and $q\geqslant 1$ implies
\begin{align*}
	\|\varphi(t,\cdot)\|_{L^2}^2
	=
	\int_{-L-c_*t}^{L+c_*t}
	|\varphi(t,x)|^2
	\,\mathrm dx
&\lesssim
	(L+c_*t)^{\frac{1}{p}}
	\|\varphi(t,\cdot)\|_{L^{2q}}^{2}
	\\
	&\lesssim
	t^{\frac{1}{p}}
	\|\varphi(t,\cdot)\|_{L^{2q}}^2.
\end{align*}
Consequently, the lower bound estimate in Theorem \ref{Thm-02} indicates
\[
\|\varphi(t,\cdot)\|_{L^{2q}}
\gtrsim
t^{\frac{1}{2q}-\frac{1}{2}}
\|\varphi(t,\cdot)\|_{L^2}
\gtrsim
t^{\frac{1}{4}+\frac{1}{2q}}
|P_{\varphi_1}|,
\]
which completes the proof.

\section{Further remarks: Relation to the dissipative Timoshenko system}\setcounter{equation}{0}\label{Sec-Final}

\hspace{5mm}
In Section \ref{Section-Introduction}, we pointed out that the singular low-frequency structure of the classical Timoshenko system generates an intrinsic plate-type asymptotic profile and is responsible for the polynomial growth structure of solutions.
Moreover, as discussed in Remark \ref{Rem-2.8}, the frictional damping mechanism in the dissipative Timoshenko system \eqref{Eq-dissipative-Timoshenko} modifies the corresponding profile through a diffusive effect, while preserving the underlying plate-type structure. A similar phenomenon for the free wave equation and the strongly damped wave equation
has recently been observed in \cite{Ikehata-Takeda=2026}.

Motivated by these observations,
we next investigate
the relation between
the classical Timoshenko system
\eqref{Eq-Timoshenko}
and the dissipative Timoshenko system
\eqref{Eq-dissipative-Timoshenko}
in the small damping regime
\begin{align}\label{Damping-Regime}
0<\gamma<2\sqrt{\rho EI},
\end{align}
where the low-frequency oscillatory structure
is preserved. Throughout this section,
$(\varphi^{\gamma=0},\psi^{\gamma=0})$ and $(\varphi^{\gamma>0},\psi^{\gamma>0})$
denote the solutions to \eqref{Eq-Timoshenko} and \eqref{Eq-dissipative-Timoshenko},
respectively, associated with the same initial data $(\varphi_0,\varphi_1)$ as well as $(\psi_0,\psi_1)$,
for which no superscripts are used.

More precisely,
we establish
a large time vanishing dissipation limit
for time-normalized solutions.
After normalization
by the natural growth rates,
the solutions to
the dissipative Timoshenko system \eqref{Eq-dissipative-Timoshenko}
converge to those of the classical Timoshenko system \eqref{Eq-Timoshenko} as $\gamma\to0$ in the large time regime.

\subsection{Large time behavior for the dissipative Timoshenko system}

\hspace{5mm}
By applying the same low-frequency analysis
as in \cite{Chen=2026}
to the dissipative Timoshenko system
\eqref{Eq-dissipative-Timoshenko},
one obtains the following large time asymptotic behavior
in the small damping regime \eqref{Damping-Regime}, which coincides with
\cite[Theorem 2.1 and Theorem 2.2]{Chen=2026}.
We introduce the notations
\begin{align*}
	c_{\gamma}
	:=
	\frac{\sqrt{4\rho EI-\gamma^2}}{2\rho}
	\ \ \mbox{and}\ \ 
	\delta_{\gamma}
	:=
	\frac{\gamma}{2\rho}.
\end{align*}

\begin{prop}\label{Prop-01}
	Suppose that
	$(\varphi_0,\varphi_1)\in Z_1^{0}$
	and
	$(\psi_0,\psi_1)\in Z_{2}^{0}$
	such that
	$P_{\varphi_1}\neq0$
	for the dissipative Timoshenko system
	\eqref{Eq-dissipative-Timoshenko}
	with
	$0<\gamma<2\sqrt{\rho EI}$.
	Then, 	the transversal displacement
	$\varphi^{\gamma>0}$ satisfies the following optimal growth estimate:
	\begin{align*}
		t^{\frac34}|P_{\varphi_1}|
		\lesssim
		\|\varphi^{\gamma>0}(t,\cdot)\|_{L^2}
		\lesssim
		t^{\frac34}
		\|(\varphi_0,\varphi_1)\|_{Z_1^0}
		+
		t^{\frac14}
		\|(\psi_0,\psi_1)\|_{Z_2^0}
	\end{align*}
	for sufficiently large time,
	and the following asymptotic relation:
	\begin{align*}
		\lim\limits_{t\to+\infty}
		t^{-\frac34}
		\left\|
		\varphi^{\gamma>0}(t,\cdot)
		-
		\sqrt t\,
		\widetilde{\ml{G}}_{0}
		\left(
		\tfrac{\cdot}{\sqrt t};\gamma
		\right)
		P_{\varphi_1}
		\right\|_{L^2}
		=
		0,
	\end{align*}
	where the diffusion plate-type asymptotic profile is given by
	\begin{align*}
		\widetilde{\ml{G}}_{0}(y;\gamma)
		:=
		\ml{F}^{-1}_{\eta\to y}
		\left(
		\frac{
			\sin(c_{\gamma}|\eta|^2)
		}{
			c_{\gamma}|\eta|^2
		}
		\,
		\mathrm e^{-\delta_{\gamma}|\eta|^2}
		\right).
	\end{align*}
\end{prop}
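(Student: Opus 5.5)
The plan mirrors the classical-case argument of Section \ref{Section-L2-est}, with the damping built into the characteristic roots. First, form the fourth-order equation for $\varphi^{\gamma>0}$ from the operator $\mathcal{L}_{\mathrm{Dis}}(\partial_t,\partial_x;\gamma)$ and take its Fourier transform. In the small damping regime \eqref{Damping-Regime}, the slow pair of roots has the low-frequency expansion
\begin{align*}
\lambda_{\pm}(\xi)=\big(-\delta_{\gamma}\pm ic_{\gamma}\big)|\xi|^2+O(|\xi|^4).
\end{align*}
The fast pair satisfies $\mathrm{Re}\,\lambda\leqslant-c<0$. The fast pair therefore contributes only exponentially decaying terms, bounded in $L^2_{\chi}$ by $\mathrm{e}^{-ct}$ times the data norms. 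Applying Cramer's rule as in \eqref{Rep-varphi}, the dominant low-frequency multiplier acting on $\widehat{\varphi}_1$ is
\begin{align*}
\frac{\mathrm{e}^{\lambda_+t}-\mathrm{e}^{\lambda_-t}}{\lambda_+-\lambda_-}\big(1+O(|\xi|^2)\big).
\end{align*}
This multiplier is approximated by
\begin{align*}
\frac{\sin(c_{\gamma}|\xi|^2t)}{c_{\gamma}|\xi|^2}\,\mathrm{e}^{-\delta_{\gamma}|\xi|^2t}.
\end{align*}
The remaining multipliers, acting on $\varphi_0,\psi_0,\psi_1$, carry at least one extra factor $|\xi|$ or no singular amplitude at all. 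Following the bounds in the subsection on upper bound estimates, they give contributions of size $O(1)$ or $O(t^{1/4})$ in $L^2$. In the high and bounded frequency zones, the damping gives uniform bounds through the $Z_1^0\times Z_2^0$ norms, exactly as in the non-equal speed high-frequency estimate above. So the whole upper bound reduces to the low-frequency sine kernel.

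The key approximation step is an analogue of Lemma \ref{Lemma-Approx}. Here the damping helps. As noted in its proof, the pointwise error $|\xi|^4t\,\mathrm{e}^{-\delta_{\gamma}|\xi|^2t}$, together with the $O(|\xi|^2)$ correction to $1/(\lambda_+-\lambda_-)$, is absorbed by the Gaussian factor. One can either argue pointwise as in \cite{Chen=2026}, or reuse the scaling $\eta=\sqrt t\,\xi$ and dominated convergence, with $\min\{1,|\eta|^{-4}\}\mathrm{e}^{-c|\eta|^2}$ as an integrable majorant. Either route gives an error of $o(t^{3/2})$ in squared $L^2$ norm.

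Next comes the moment reduction. I would prove the analogue of Lemma \ref{Lemma-Approx-2} for the damped kernel. Its proof carries over verbatim, since the kernel's $L^2$ norms with one extra derivative scale as $t^{1/2}$, the same as in Lemma \ref{Lemma-Kernel}. This replaces $\varphi_1$ by $P_{\varphi_1}\delta_0$ up to $o(t^{3/2})$.

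Finally, the change of variables $\eta=\sqrt t\,\xi$ gives the identity
\begin{align*}
\Big\|\mathcal F^{-1}\Big(\tfrac{\sin(c_{\gamma}|\xi|^2t)}{c_{\gamma}|\xi|^2}\mathrm{e}^{-\delta_{\gamma}|\xi|^2t}\Big)\Big\|_{L^2}^2
= t^{3/2}\,\big\|\widetilde{\mathcal G}_0(\cdot;\gamma)\big\|_{L^2}^2 .
\end{align*}
Inverse Fourier transformation then yields exactly $\sqrt t\,\widetilde{\mathcal G}_0(\cdot/\sqrt t;\gamma)$. The right-hand norm is finite and positive, because the integrand is $\approx 1$ near $\eta=0$ and Gaussian-decaying at infinity. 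Removing the cut-off $\chi_{\intt}$ costs only $O(\mathrm{e}^{-ct})$.

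The lower bound then follows from the triangle inequality, as in the subsection on the nontrivial zeroth moment $P_{\varphi_1}\neq0$: the profile term is of order $t^{3/2}|P_{\varphi_1}|^2$ and every remainder is $o(t^{3/2})$. The main obstacle I expect is the root expansion and the Cramer-rule bookkeeping for the damped quartic. The damped quartic is not even in $\lambda$, so the roots are not $\pm$ pairs. I must check carefully that the coefficient of the slow mode acting on $\widehat{\varphi}_1$ is $1+O(|\xi|^2)$, and that its denominator is $\lambda_+-\lambda_-\approx 2ic_{\gamma}|\xi|^2$. I would obtain the expansion by substituting $\lambda=\mu|\xi|^2$ into the characteristic equation and matching the $|\xi|^4$ terms. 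This gives $\rho\mu^2+\gamma\mu+EI=0$, whose roots are $-\delta_{\gamma}\pm ic_{\gamma}$, consistent with \eqref{Damping-Regime}.
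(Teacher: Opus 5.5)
The paper gives no proof of this proposition. It only says the result follows from the low-frequency analysis of \cite{Chen=2026}, namely pointwise Fourier approximations absorbed by $\mathrm{e}^{-\delta_\gamma|\xi|^2t}$ (cf.\ the discussion in the proof of Lemma \ref{Lemma-Approx}). Your low-frequency core follows that route and is sound. The equation $\rho\mu^2+\gamma\mu+EI=0$ gives $\mu=-\delta_\gamma\pm ic_\gamma$. The $\widehat{\varphi}_1$-coefficient of each slow mode is $(1+O(|\xi|^2))/(\lambda_\pm-\lambda_\mp)$; the two corrections differ, but this costs only a bounded multiplier times $\mathrm{e}^{-c|\xi|^2t}$. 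The Gaussian absorption, the moment reduction and the scaling identity all go through.

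However, two steps you carry over from the conservative case fail. First, once $\gamma>0$ the datum $\psi_0$ no longer has a bounded low-frequency multiplier. In the Laplace-transformed system, the second equation carries the data term $I_\rho s\widehat{\psi}_0+I_\rho\widehat{\psi}_1+\gamma\widehat{\psi}_0$. So at the slow roots $s=O(|\xi|^2)$, the $\psi$-data produce, to leading order,
\[
-\frac{i\xi}{\rho}\big(I_\rho\widehat{\psi}_1+\gamma\widehat{\psi}_0\big)\,\frac{\sin(c_\gamma|\xi|^2t)}{c_\gamma|\xi|^2}\,\mathrm{e}^{-\delta_\gamma|\xi|^2t}.
\]
Only $\psi_0\in H^{-1}$ is assumed, so the $\psi_0$-part is bounded by $t^{1/2}\|\psi_0\|_{H^{-1}}$, not by $O(1)$ or $O(t^{1/4})$. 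For instance, $|\widehat{\psi}_0(\xi)|\approx|\xi|^{-a}$ near $0$ with $0<a<\frac12$ gives growth $t^{\frac14+\frac a2}$. This does not affect the asymptotic relation or the lower bound, since $t^{1/2}=o(t^{3/4})$. But the upper bound with $t^{1/4}$ in front of $\|(\psi_0,\psi_1)\|_{Z_2^0}$ then holds only in one of two weaker forms: with a data-dependent time threshold (absorbing into the $t^{3/4}$ term via $P_{\varphi_1}\neq0$), or after additionally assuming $\psi_0\in L^1$.

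Second, your high-frequency step invokes the non-equal speed estimate, so it only covers $c_{\mathrm S}\neq c_{\mathrm R}$. For $c_{\mathrm S}=c_{\mathrm R}$, write $\lambda=ic_{\mathrm S}|\xi|+\nu$. The two roots near $ic_{\mathrm S}|\xi|$ then solve $4\nu^2+\frac{2\gamma}{I_\rho}\nu+c_{\mathrm O}^2=0$ at leading order. So damping does not remove the $O(1)$ gap that underlies \eqref{Eq-01}. Consequently the $\widehat{\psi}_1$-multiplier of $\widehat{\varphi}$ is of size $|\xi|^{-1}$, times a time factor that is nonzero for generic $t$. With that size, $\psi_1\in H^{-2}$ does not even guarantee $\varphi(t,\cdot)\in L^2$. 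You need either to restrict to $c_{\mathrm S}\neq c_{\mathrm R}$, or to assume $(\psi_0,\psi_1)\in Z_1^0$ when $c_{\mathrm S}=c_{\mathrm R}$, as $Z^0_{\psi,\star}$ does in Theorem \ref{Thm-01}.
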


\begin{prop}\label{Prop-02}
	Suppose that
	$(\varphi_0,\varphi_1)\in Z_2^{0}$
	and
	$(\psi_0,\psi_1)\in Z_{3}$
	such that
	$P_{\varphi_1}\neq0$
	for the dissipative Timoshenko system
	\eqref{Eq-dissipative-Timoshenko}
	with
	$0<\gamma<2\sqrt{\rho EI}$.
	Then,
	the rotation angle
	$\psi^{\gamma>0}$
	satisfies the following optimal growth estimate:
	\begin{align*}
		t^{\frac14}|P_{\varphi_1}|
		\lesssim
		\|\psi^{\gamma>0}(t,\cdot)\|_{L^2}
		\lesssim
		t^{\frac14}
		\|(\varphi_0,\varphi_1)\|_{Z_2^0}
		+
		\|(\psi_0,\psi_1)\|_{Z_3}
	\end{align*}
	for sufficiently large time,
	and the following asymptotic relation:
	\begin{align*}
		\lim\limits_{t\to+\infty}
		t^{-\frac14}
		\left\|
		\psi^{\gamma>0}(t,\cdot)
		-
		\widetilde{\ml{G}}_{1}
		\left(
		\tfrac{\cdot}{\sqrt t};\gamma
		\right)
		P_{\varphi_1}
		\right\|_{L^2}
		=
		0,
	\end{align*}
	where the derivative of diffusion plate-type asymptotic profile is given by
	\begin{align*}
		\widetilde{\ml{G}}_{1}(y;\gamma)
		:=
		\partial_y
		\widetilde{\ml{G}}_{0}(y;\gamma)
		=
		\ml{F}^{-1}_{\eta\to y}
		\left(
		i\eta
		\frac{
			\sin(c_{\gamma}|\eta|^2)
		}{
			c_{\gamma}|\eta|^2
		}
		\,
		\mathrm e^{-\delta_{\gamma}|\eta|^2}
		\right).
	\end{align*}
\end{prop}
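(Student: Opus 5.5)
This statement is Proposition \ref{Prop-02}, the $\psi$-analogue for the dissipative system. I would prove it by repeating the low-frequency scheme of Section \ref{Section-L2-est}, with the conservative root $\lambda_{\mathrm I,2}$ replaced by its damped counterpart. In the small damping regime \eqref{Damping-Regime}, the characteristic equation of $\mathcal{L}_{\mathrm{Dis}}$ has, for $|\xi|\leqslant\varepsilon_0$, a pair of roots
\[
\lambda_{\pm}=-\delta_{\gamma}|\xi|^2\pm i c_{\gamma}|\xi|^2+O(|\xi|^3),
\]
together with two roots whose real part is bounded away from zero. The latter contribute only exponentially decaying terms $\mathrm e^{-ct}$. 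Applying Cramer's rule to the data $(\widehat\psi_0,\widehat\psi_1,\widehat\psi_2,\widehat\psi_3)$, exactly as in \eqref{Rep-psi}, gives the decomposition we need. The coefficient of $\widehat\varphi_1$ along the $\lambda_\pm$ modes is $i\xi\,\mathrm e^{-\delta_\gamma|\xi|^2t}\frac{\sin(c_\gamma|\xi|^2t)}{c_\gamma|\xi|^2}(1+O(|\xi|))$. The coefficients of $\widehat\varphi_0,\widehat\psi_0,\widehat\psi_1$ are bounded by $C\chi_{\intt}(\xi)$, thanks to the same cancellation $\lambda^2-\frac{EI|\xi|^2+K}{I_\rho}=O(|\xi|^2)$ used for $\widehat\psi$.

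\textbf{Upper bound.} On the low-frequency zone, Lemma \ref{Lemma-Ikehata} controls $\widehat\varphi_1$ and Plancherel's identity handles the rest. The key multiplier estimate is
\[
\Big\|\chi_{\intt}|\xi|\,\mathrm e^{-\delta_\gamma|\xi|^2t}\tfrac{\sin(c_\gamma|\xi|^2t)}{c_\gamma|\xi|^2}\Big\|_{L^2}^2\approx t^{\frac12},
\]
which follows by the scaling $\eta=\sqrt t\,\xi$ and is simpler than Lemma \ref{Lemma-Kernel} because the Gaussian factor gives integrability directly. The bounded and high-frequency zones are treated as in the $c_{\mathrm S}\neq c_{\mathrm R}$ case above. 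Damping only improves matters there, since $\Re\lambda<0$ away from $\xi=0$. This yields the data classes $Z_2^0$ for $(\varphi_0,\varphi_1)$ and $Z_3$ for $(\psi_0,\psi_1)$.

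\textbf{Asymptotic profile.} I would proceed in three steps.
\begin{enumerate}[1.]
\item Replace the exact damped multiplier by $i\xi\,\mathrm e^{-\delta_\gamma|\xi|^2t}\frac{\sin(c_\gamma|\xi|^2t)}{c_\gamma|\xi|^2}$. Here the pointwise error $O(|\xi|^3 t)$ in the phase is harmless: multiplied by the Gaussian it gives $|\xi|^3t\,\mathrm e^{-c|\xi|^2t}\lesssim t^{-1/2}$, as in \cite[Inequality (3.18)]{Chen=2026}. Alternatively, dominated convergence in $\eta$ works as in Lemma \ref{Lemma-Approx}.
\item Replace $\widehat\varphi_1$ by $P_{\varphi_1}$ via the argument of Lemma \ref{Lemma-Approx-2} with $\ell=1$. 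The derivative kernel carries an extra factor $t^{-1/2}$, so the error is $o(t^{1/2})$.
\item Rescale. The result is exactly $\widetilde{\ml G}_1(\frac{\cdot}{\sqrt t};\gamma)P_{\varphi_1}$, and its high-frequency tail is $O(|P_{\varphi_1}|)$.
\end{enumerate}

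\textbf{Lower bound.} Plancherel's identity restricts the norm to $\chi_{\intt}$. By the triangle inequality, the squared profile norm $\approx t^{1/2}|P_{\varphi_1}|^2$ dominates the $o(t^{1/2})$ remainder. For this lower estimate on the profile norm I would restrict to $|\xi|\leqslant\delta t^{-1/2}$, where the Gaussian is at least $\mathrm e^{-\delta_\gamma\delta^2}$ and the quotient $\sin y/y$ is at least $1/\sqrt2$.

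\textbf{Main obstacle.} The expected difficulty is the low-frequency expansion of the damped quartic roots: one must show that the dissipative $\widehat\psi$-coefficients still exhibit the cancellation that lowers the singularity to $|\xi|^{-1}$. I would first verify this by expanding the Cramer numerators to first order in $|\xi|^2$, with $\gamma$ entering only through $\lambda_\pm$.
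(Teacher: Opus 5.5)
Your proposal is correct and follows essentially the route the paper takes. The paper gives no independent proof here: it invokes the low-frequency analysis of the dissipative system from Chen (2026), namely the damped slow roots $(-\delta_\gamma\pm ic_\gamma)|\xi|^2+\cdots$, the Gaussian factor $\mathrm e^{-\delta_\gamma|\xi|^2t}$ absorbing the pointwise phase error (the step that fails for $\gamma=0$ and forces Lemmas \ref{Lemma-Approx} and \ref{Lemma-Kernel}), the replacement of $\widehat\varphi_1$ by $P_{\varphi_1}$, and the scaling $\eta=\sqrt t\,\xi$, which is exactly what you reconstruct. Two small corrections: with $\varphi_1$ only in $L^1$ you should use $|\widehat\varphi_1|\leqslant\|\varphi_1\|_{L^1}$ rather than Lemma \ref{Lemma-Ikehata}, and your high-frequency step (like the statement, which does not distinguish wave speeds) really needs $c_{\mathrm S}\neq c_{\mathrm R}$, because for equal speeds the $\widehat\varphi_0$-coefficient of $\widehat\psi$ is of order one at high frequency (cf.\ \eqref{Eq-01}) and damping gives only $|\mathrm e^{\lambda t}|\leqslant1$, not a gain in $\xi$, so $Z_1^0$ would be needed as in Theorem \ref{Thm-02}.
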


\begin{remark}
	The asymptotic profiles
	$\widetilde{\ml{G}}_0(y;\gamma)$
	and
	$\widetilde{\ml{G}}_1(y;\gamma)$
	consist of two different components:
	\begin{itemize}
		\item the oscillatory plate-type phase $	\frac{
			\sin(c_\gamma |\eta|^2)}{c_\gamma |\eta|^2}$,
			\item the diffusive factor $\mathrm e^{-\delta_\gamma |\eta|^2}$.
	\end{itemize}
	The oscillatory structure is inherited from the conservative Timoshenko system \eqref{Eq-Timoshenko},
	whereas the Gaussian factor is generated by the frictional damping mechanism.
	In particular,
	as $\gamma\to0$,
	one has
	\[
	c_\gamma\to c_{\mathrm D}\ \ \mbox{and}\ \ \delta_\gamma\to0,
	\]
	so that the dissipative profiles converge formally to the conservative profiles
	$\ml{G}_0(y)$ and $\ml{G}_1(y)$
	derived in Theorem \ref{Thm-01} and Theorem \ref{Thm-02}.
\end{remark}

\begin{remark} The restriction $ 0<\gamma<2\sqrt{\rho EI}$ corresponds to the regime where the low-frequency characteristic roots of the dissipative Timoshenko system \eqref{Eq-dissipative-Timoshenko} remain complex-valued. Consequently, the oscillatory plate-type structure persists under the frictional damping mechanism, which makes it possible to compare the dissipative dynamics with those of the classical Timoshenko system \eqref{Eq-Timoshenko} in the vanishing dissipation limit. \end{remark}

\subsection{Large time vanishing dissipation limit}

\hspace{5mm}
The next result shows that the frictional damping modifies the leading asymptotic behavior only through a diffusive factor and a shifted oscillation frequency, while preserving the singular plate-type structure responsible for the large time growth of solutions. The following statement is formulated under a common set of assumptions guaranteeing that all asymptotic profiles appearing in Theorem \ref{Thm-01}, Theorem \ref{Thm-02}, Proposition \ref{Prop-01}, and Proposition \ref{Prop-02} are well-defined simultaneously.

\begin{theorem}\label{Thm-3}
	Suppose that $(\varphi_0,\varphi_1)\in Z_1^{0}$ and $(\psi_0,\psi_1)\in Z_{1}^0$ such that $P_{\varphi_1}\neq0$ for the Timoshenko systems \eqref{Eq-Timoshenko} and \eqref{Eq-dissipative-Timoshenko}. Then,
	the transversal displacements and the rotation angles, respectively, satisfy the following asymptotic relations:
	\begin{align*}
		\lim\limits_{\gamma\to 0}\lim\limits_{t\to+\infty}t^{-\frac{3}{4}}\left\|\varphi^{\gamma>0}(t,\cdot)-\varphi^{\gamma=0}(t,\cdot)\right\|_{L^2}=0,\\[0.5em]
		\lim\limits_{\gamma\to 0}\lim\limits_{t\to+\infty}t^{-\frac{1}{4}}\left\|\psi^{\gamma>0}(t,\cdot)-\psi^{\gamma=0}(t,\cdot)\right\|_{L^2}
		=0.
	\end{align*}
\end{theorem}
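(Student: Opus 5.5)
The plan is to route everything through the asymptotic profiles. For fixed $\gamma\in(0,2\sqrt{\rho EI})$ we compare $\varphi^{\gamma>0}$ and $\varphi^{\gamma=0}$ with their respective profiles, and then compare the two profiles explicitly. The hypotheses $(\varphi_0,\varphi_1)\in Z_1^0$ and $(\psi_0,\psi_1)\in Z_1^0$ imply the hypotheses of Theorem \ref{Thm-01} and Proposition \ref{Prop-01}, since $Z_1^0\hookrightarrow Z_2^0$ for the $\psi$-data. They also imply those of Theorem \ref{Thm-02} and Proposition \ref{Prop-02}, because $Z_1^0\hookrightarrow Z_2^0$ for the $\varphi$-data and $Z_1^0\hookrightarrow Z_3$.

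By the triangle inequality,
\begin{align*}
t^{-\frac34}\|\varphi^{\gamma>0}(t,\cdot)-\varphi^{\gamma=0}(t,\cdot)\|_{L^2}
&\leqslant t^{-\frac34}\left\|\varphi^{\gamma>0}(t,\cdot)-\sqrt t\,\widetilde{\ml{G}}_0\left(\tfrac{\cdot}{\sqrt t};\gamma\right)P_{\varphi_1}\right\|_{L^2}
+t^{-\frac34}\left\|\varphi^{\gamma=0}(t,\cdot)-\sqrt t\,\ml{G}_0\left(\tfrac{\cdot}{\sqrt t}\right)P_{\varphi_1}\right\|_{L^2}\\
&\quad+t^{-\frac34}\sqrt t\,\left\|\widetilde{\ml{G}}_0\left(\tfrac{\cdot}{\sqrt t};\gamma\right)-\ml{G}_0\left(\tfrac{\cdot}{\sqrt t}\right)\right\|_{L^2}|P_{\varphi_1}|.
\end{align*}
As $t\to+\infty$, the first two terms vanish by Proposition \ref{Prop-01} and Theorem \ref{Thm-01}. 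The change of variables $x=\sqrt t\,y$ shows that the third term equals exactly $\|\widetilde{\ml{G}}_0(\cdot;\gamma)-\ml{G}_0\|_{L^2}|P_{\varphi_1}|$, independent of $t$. The inner limit therefore exists and equals this quantity. The same argument for $\psi$, using Theorem \ref{Thm-02} and Proposition \ref{Prop-02} with the normalization $t^{-\frac14}$, leaves $\|\widetilde{\ml{G}}_1(\cdot;\gamma)-\ml{G}_1\|_{L^2}|P_{\varphi_1}|$.

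It remains to show that these profile differences tend to zero as $\gamma\to0$. By Plancherel's identity,
\[
\|\widetilde{\ml{G}}_\ell(\cdot;\gamma)-\ml{G}_\ell\|_{L^2}^2\approx\int_{\mathbb R}|\eta|^{2\ell}\left|\frac{\sin(c_\gamma|\eta|^2)}{c_\gamma|\eta|^2}\mathrm e^{-\delta_\gamma|\eta|^2}-\frac{\sin(c_{\mathrm D}|\eta|^2)}{c_{\mathrm D}|\eta|^2}\right|^2\mathrm d\eta,\qquad \ell\in\{0,1\}.
\]
Since $c_\gamma\to c_{\mathrm D}$ and $\delta_\gamma\to0$, the integrand tends to zero pointwise. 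For dominated convergence, restrict to $\gamma$ small enough that $c_\gamma\geqslant c_{\mathrm D}/2$. Using $|\sin y/y|\leqslant1$ and $|\sin y|\leqslant1$, the integrand is then bounded by $C\min\{|\eta|^{2\ell},|\eta|^{2\ell-4}\}$, uniformly in $\gamma$. This bound is integrable for $\ell\in\{0,1\}$, exactly as in the proof of Lemma \ref{Lemma-Approx}, so the outer limit is zero.

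The main obstacle is the first step: ensuring that the constants implicit in Proposition \ref{Prop-01} and Proposition \ref{Prop-02} do not interfere. Because the limit in $t$ is taken first at fixed $\gamma$, only the qualitative $o(1)$ statements for each fixed $\gamma$ are needed, and no uniformity in $\gamma$ is required. The only remaining care is verifying the data-space embeddings used in the first paragraph.
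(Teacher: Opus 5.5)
Your proposal is correct and follows essentially the same route as the paper. It uses the triangle inequality through the asymptotic profiles of Propositions \ref{Prop-01} and \ref{Prop-02} and Theorems \ref{Thm-01} and \ref{Thm-02}, the scaling $x=\sqrt t\,y$ that makes the profile difference independent of $t$, and then Plancherel plus dominated convergence as $\gamma\to0$. Your explicit check of the data-space embeddings and your $\gamma$-uniform majorant $C\min\{|\eta|^{2\ell},|\eta|^{2\ell-4}\}$ (valid once $c_\gamma\geqslant c_{\mathrm D}/2$, and also covering $\ell=1$) just spell out details the paper leaves implicit.
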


\begin{proof}
	Let us begin with the first asymptotic relation for $\varphi^{\gamma>0}$ and $\varphi^{\gamma=0}$.
	By using the triangle inequality associated with Proposition \ref{Prop-01} and Theorem \ref{Thm-01}, one notices
	\begin{align*}
		t^{-\frac{3}{4}}\left\|\varphi^{\gamma>0}(t,\cdot)-\varphi^{\gamma=0}(t,\cdot)\right\|_{L^2}
		&\leqslant t^{-\frac{3}{4}}\left\|\varphi^{\gamma>0}(t,\cdot)-\sqrt{t}\,\widetilde{\ml{G}}_{0}\left(\tfrac{\cdot}{\sqrt t};\gamma\right)P_{\varphi_1}\right\|_{L^2}\\
		&\quad+t^{-\frac{1}{4}}\left\|\widetilde{\ml{G}}_{0}\left(\tfrac{\cdot}{\sqrt t};\gamma\right)-\ml{G}_{0}\left(\tfrac{\cdot}{\sqrt t}\right)\right\|_{L^2}|P_{\varphi_1}|\\
		&\quad+t^{-\frac{3}{4}}\left\|\varphi^{\gamma=0}(t,\cdot)-\sqrt{t}\,\ml{G}_{0}\left(\tfrac{\cdot}{\sqrt t}\right)P_{\varphi_1}\right\|_{L^2}\\
		&\leqslant
		\underbrace{\left\|
		\frac{\sin(c_{\gamma}|\eta|^2)}{c_{\gamma}|\eta|^2}
		\,\mathrm{e}^{-\delta_{\gamma}|\eta|^2}-\frac{\sin(c_{\mathrm D}|\eta|^2)}{c_{\mathrm D}|\eta|^2}
		\right\|_{L^2}}_{=:A_1(\gamma)}|P_{\varphi_1}|+o(1)
	\end{align*}
	for large time
	$t\gg1$,
	where we used the scaling property of the $L^2$ norm with $\eta=\sqrt{t}\,\xi$.
	
	Since
	$c_{\gamma}\to c_{\mathrm D}$
	and
	$\delta_{\gamma}\to0$
	as
	$\gamma\to0$,
	the integrand converges pointwise to zero.
	Moreover,
	\begin{align*}
		[A_1(\gamma)]^2
		&=\int_{\mathbb R}
		\left|\frac{\sin(c_{\gamma}|\eta|^2)
		}{
			c_{\gamma}|\eta|^2
		}
		\,\mathrm{e}^{-\delta_{\gamma}|\eta|^2}
		-
		\frac{
			\sin(c_{\mathrm D}|\eta|^2)
		}{
			c_{\mathrm D}|\eta|^2
		}
		\right|^2
		\mathrm d\eta
		\\
		&\lesssim
		\int_{|\eta|\leqslant1}
		\mathrm d\eta
		+
		\int_{|\eta|\geqslant1}
		|\eta|^{-4}
		\,\mathrm d\eta
		<+\infty.
	\end{align*}
	Therefore, the dominated convergence theorem yields $\lim\limits_{\gamma\to0}A_1(\gamma)=0$.
	Consequently,
	\begin{align*}
		\lim\limits_{\gamma\to0}
		\lim\limits_{t\to+\infty}
		t^{-\frac{3}{4}}
		\left\|
		\varphi^{\gamma>0}(t,\cdot)
		-
		\varphi^{\gamma=0}(t,\cdot)
		\right\|_{L^2}
		=0.
	\end{align*}
	
	Concerning the second asymptotic relation,
	we follow the same argument associated with Proposition \ref{Prop-02} and Theorem \ref{Thm-02} to derive our aim.
	This completes the proof.
\end{proof}

\begin{remark}
	Theorem \ref{Thm-3} shows that,
	after normalization by the intrinsic growth rates
	$t^{\frac34}$
	and
	$t^{\frac14}$,
	the dissipative Timoshenko system \eqref{Eq-dissipative-Timoshenko} converges to the classical Timoshenko system \eqref{Eq-Timoshenko} in the small damping regime \eqref{Damping-Regime}.
	Therefore,
	the frictional damping modifies only the profile shape through the diffusive factor
	$\mathrm{e}^{-\delta_{\gamma}|\eta|^2}$
	and the shifted oscillation frequency
	$c_{\gamma}$,
	while the singular plate-type growth mechanism itself remains stable as
	$\gamma\to0$.
\end{remark}

\begin{remark}
	The large time limit and the vanishing dissipation limit
	are taken successively in Theorem \ref{Thm-3}.
	Since the dissipative correction
	appears through the factor
	$\mathrm e^{-\delta_\gamma |\eta|^2}$,
	the corresponding asymptotic regimes
	depend on the interaction
	between the parameters
	$\gamma$ and $t$.
	The analysis of simultaneous limits,
	for example in the transitional regime
	$\gamma \,t\approx 1$,
	remains an interesting open problem.
\end{remark}

\section*{Acknowledgments}
Wenhui Chen is supported in part by the National Natural Science Foundation of China (grant No. 12301270), Guangdong Basic and Applied Basic Research Foundation (grant No. 2025A1515010240).  The author thanks Ryo Ikehata (Hiroshima University) for some suggestions in the preparation of this manuscript.

\end{document}